\documentclass[12pt, reqno]{amsart}
\usepackage{amscd,amsmath,amsthm,amssymb,graphics}
\usepackage{amsfonts,amssymb,amscd,amsmath,enumitem,verbatim}
\usepackage[a4paper,top=3cm,left=3cm,right=3cm]{geometry}
\usepackage{todonotes}
\theoremstyle{plain}
\usepackage{orcidlink}
\usepackage{longtable}
\usepackage{color}
\usepackage{hyperref}
\newtheorem{Theorem}{Theorem}
\newtheorem{Lemma}[Theorem]{Lemma}
\newtheorem{Corollary}[Theorem]{Corollary}
\newtheorem{Proposition}[Theorem]{Proposition}

\newtheorem{Conjecture}[Theorem]{Conjecture}

\theoremstyle{definition}
\newtheorem{Definition}[Theorem]{Definition}

\newtheorem{Remark}[Theorem]{Remark}

\newtheorem{Example}[Theorem]{Example}

\usepackage{graphicx}
\usepackage{algorithm}
\usepackage{algpseudocode}
\usepackage{booktabs}
\usepackage{tabularx}
\usepackage{array}
\algrenewcommand\algorithmicrequire{\textbf{Input:}}
\algrenewcommand\algorithmicensure{\textbf{Output:}}

\newtheorem{innercustomgeneric}{\customgenericname}
\providecommand{\customgenericname}{}
\newcommand{\newcustomtheorem}[2]{%
  \newenvironment{#1}[1]
  {%
   \renewcommand\customgenericname{#2}%
   \renewcommand\theinnercustomgeneric{##1}%
   \innercustomgeneric
  }
  {\endinnercustomgeneric}
}

\newcustomtheorem{customthm}{Theorem}

\newcommand{\ord}{\text{ord}}

\title[]{Periodicity and Period-Length Bounds for Browkin $p$-Adic Continued Fractions}

\author[S. Barbero]{Stefano Barbero}
\address[S. Barbero]{Dipartimento di Matematica, Università di Trento, Via Sommarive 14, 38123, Povo (TN), Italy}
\email{stefano.barbero@unitn.it}

\author[N. Murru]{Nadir Murru}
\address[N. Murru]{Dipartimento di Matematica, Università di Trento, Via Sommarive 14, 38123, Povo (TN), Italy}
\email{nadir.murru@unitn.it}

\author[M. Urani]{Matilda Urani}
\address[M. Urani]{Dipartimento di Scienze Matematiche "Giuseppe Luigi Lagrange", Politecnico di Torino, Corso Duca degli Abruzzi 24, 10129, Torino (TO), Italy}
\email{matilda.urani@polito.it}

\thanks{}

\subjclass[2010]{}
\keywords{}

\begin{document}

\begin{abstract}
Continued fractions have been introduced and studied in the field of $p$--adic numbers by several authors, with the aim of developing analogues of the classical theory of real continued fractions. In this paper, we focus on Browkin's continued fraction algorithm, for which several fundamental questions remain open, especially concerning periodic expansions of quadratic irrationals.
In this paper, we solve a conjecture about periodicity left open in \cite{CMT}. As a consequence, we prove that, for every positive integer $t$, there exist infinitely many square roots of integers whose Browkin continued fraction expansion is periodic with period length $2t$. This also settles a problem originating from previous classical results on the possible period lengths of square roots of integers. 
Moreover, we provide new sufficient conditions for the periodicity of quadratic irrationals and derive explicit bounds for the corresponding period lengths. These results also restrict the possible behaviour of a quadratic irrational whose Browkin continued fraction expansion is non-periodic, and therefore provide new tools for investigating whether an analogue of Lagrange's theorem can hold. 
Finally, we also provide a computational study of the Browkin expansions of quadratic irrationals, with particular attention to the behaviour of square roots of integers and to the search for possible non-periodic examples.
\end{abstract}

\maketitle

\section{Introduction}
Continued fractions over the field of $p$--adic numbers were first introduced by Mahler \cite{Mahler} with the aim of studying rational approximations and establishing $p$--adic counterparts of classical results in Diophantine approximation. However, explicit algorithms for computing the continued fraction expansion of $p$--adic numbers were deeply studied and developed only later. Among the best known and most extensively studied are those introduced by Schneider \cite{Sch}, Ruban \cite{Ruban}, and Browkin \cite{Bro1}.
Despite the extensive research devoted to these algorithms and their variants proposed over the years (such as the algorithms studied in \cite{Bro2, MR24, MRS23, Wang, Weg1, Yas}), there is still no canonical continued fraction algorithm in the $p$--adic setting that reproduces all the desirable properties of the classical ones, both in terms of rational approximations and periodic expansions. We refer the reader to \cite{Romeo1} for a complete survey on $p$--adic continued fractions.

In this field, one of the most important and challenging open problems is to characterize quadratic irrationals by means of periodic expansions, i.e., to find an algorithm for which an analogue of Lagrange's theorem holds. For both Schneider’s and Ruban’s algorithms, it is known that the analogue of Lagrange’s theorem fails. This was proved by de Weger \cite{Weg2} for Schneider’s algorithm and by Ooto \cite{Ooto} for that of Ruban, following the same argument developed by de Weger. In \cite{CVZ}, the authors also provided an effective criterion for determining when the expansion of a quadratic irrational is not periodic. By contrast, for Browkin’s algorithm, the problem of proving or disproving an analogue of Lagrange’s theorem appears to be considerably more difficult and remains unsolved.

Thus, there are many studies regarding the periodicity of the Browkin algorithm. For instance, Romeo \cite{Romeo2} studied the connection between  periodicity and real convergence of Browkin continued fractions.
Bedocchi \cite{Bed1, Bed2, Bed3} proved several results focused on the expansion of $\sqrt{D} \in \mathbb Q_p$ for $D$ a non--square integer. In particular, when $p$ is odd, he proved that if $\sqrt{D}$ has a periodic expansion then the preperiod has length 2. Moreover, he proved that $\sqrt{D}$ has never period 1 and, for any odd $t>1$, there are at most finitely many $\sqrt{D}$ with period of length $t$.
On the other hand, he found infinitely many square roots of integers with periods of length 2, 4, and 6. Pursuing this line of research, in \cite{CMT} the authors proved that, for every $s \geq 1$, there exist infinitely many square roots of integers whose Browkin continued fraction expansions are periodic with period length $2^s$. To this end, they introduced the notion of nice continued fractions and formulated some conjectures whose resolution would imply that, for every integer $t$, there exist infinitely many square roots of integers with a periodic Browkin expansion of period length $2t$, thereby settling a conjecture left open in Bedocchi’s work. 

In this paper, we prove these conjectures and we consequently solve the problem left open since Bedocchi’s work. Moreover, we also establish several sufficient conditions for periodicity, together with explicit bounds on the corresponding period lengths. These results are relevant to the broader problem of disproving an analogue of Lagrange’s theorem for Browkin continued fractions, as they provide new tools for the search for possible counterexamples. More precisely, the sufficient conditions and period-length bounds substantially restrict the range of cases in which such a counterexample may occur. We also investigate these situations through computational considerations and numerical examples, which further clarify where a failure of periodicity might be expected.

The paper is structured as follows. In Section \ref{sec:pre}, we fix the notation and recall some preliminary facts and results about Browkin continued fractions. In Section \ref{sec:nice}, we solve the conjectures left open in \cite{CMT}. Section \ref{sec:bound} is devoted to the study of sufficient conditions for periodicity, improving and expanding some results obtained in \cite{CMT}. Finally, in Section \ref{sec:comp}, we present some computations about nice continued fractions and the expansions of quadratic irrationals. 

\section{Preliminaries and notation}\label{sec:pre}

In the following, we will fix an odd prime $p$ and we denote by $v_p(\cdot)$ the $p$--adic valuation, $|\cdot|_p$ the $p$--adic norm and $|\cdot|$ the Euclidean norm. A $p$--adic number $\alpha$ will be represented by a formal power series $\alpha = \sum_{n=r}^\infty b_n p^n$, for $r \in \mathbb Z$ and the digits $b_n$'s are taken in the set of centered representatives modulo $p$.

We define the function $s: \mathbb Q_p \rightarrow \mathbb Q$ by 
\[ \begin{cases} s(\alpha) = \sum_{n=r}^0 b_n p^n, \quad \text{if $r \leq 0$} \cr 0, \quad \text{otherwise} \end{cases}. \]
More precisely, the image of $s$ is given by the set $\mathbb Z[\frac{1}{p}] \cap \left(-\frac{p}{2}, \frac{p}{2}\right)$ which is discrete in $\mathbb Q_p$ and plays the same role as the real floor function \cite{Bro1, Lager}.

Then, given $\alpha_0 \in \mathbb Q_p$, its Browkin continued fraction expansion is computed by 
\[\begin{cases} a_n = s(\alpha_n) \cr \alpha_{n+1} = \cfrac{1}{\alpha_n-a_n} \end{cases} \]
for all $n=0,1, \ldots$ and we write $\alpha_0 = [a_0, a_1, \ldots]$, where $a_n$'s and $\alpha_n$'s are called partial quotients and complete quotients, respectively. If $\alpha_n = s(\alpha_n)$ for some index $n$, then the continued fraction is finite of length $n+1$. 

Given a Browkin continued fraction $[a_0, a_1, \ldots]$, the convergents are defined in the usual way by
\[ \cfrac{A_n}{B_n} := [a_0, \ldots, a_n]\]
for all positive $n$ less than the length of the continued fraction. Moreover, it is also convenient to define 
\[ A_{-2} := 0, \quad A_{-1} := 1, \quad B_{-2} := 1, \quad B_{-1} := 0 \]
so that numerators and denominators of the convergents can be computed by the usual recurrences
\[ A_n = a_n A_{n-1} + A_{n-2}, \quad B_n = a_n B_{n-1} + B_{n-2} \]
for $n = 0, 1, \ldots$. Observe that, in the case of Browkin continued fractions, $A_n$'s and $B_n$'s are rational numbers but they are also commonly referred to as numerators and denominators of convergents. 

In the next proposition, we recall some basic properties (see, e.g., \cite{Bro1}).

\begin{Proposition}
Given $\alpha_0 = [a_0, a_1, \ldots]$, then
\begin{enumerate}
\item $v_p(a_n) = v_p(\alpha_n) < 0$ and $|a_n|_p = |\alpha_n|_p > 1$, for all $n \geq 1$;
\item $v_p(B_n) = v_p(a_1) + \ldots + v_p(a_n)$ and $|B_n|_p = |a_1\cdots a_n|_p$, for all $n \geq1$;
\item if $a_0 \not= 0$, then $v_p(A_n) = v_p(a_0) + \ldots + v_p(a_n)$ and $|A_n|_p = |a_0 \cdots a_n|_p$ for all $n \geq 0$;
\item if $a_0 = 0$, then $v_p(A_n) = v_p(a_2) + \ldots + v_p(a_n)$ and $|A_n|_p = |a_2 \cdots a_n|_p$ for all $n \geq 0$;
\item $v_p\left( \alpha_0 - \frac{A_n}{B_n} \right) = -v_p(B_n B_{n+1})$ for all $n \geq 0$
\end{enumerate}
\end{Proposition}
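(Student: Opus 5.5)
The plan is to prove the five items in order, each by a short induction on $n$ driven by the defining recurrences of the algorithm and the ultrametric inequality.

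\textbf{Item (1).} For $n\ge 1$ we have $\alpha_n = 1/(\alpha_{n-1}-a_{n-1})$. By construction of $s$, the difference $\alpha_{n-1}-s(\alpha_{n-1})=\sum_{k\ge 1} b_k p^k$ has positive valuation. Hence $v_p(\alpha_n)<0$. The first digit of $\alpha_n$ is a nonzero centered representative, so $s(\alpha_n)$ carries exactly the leading term $b_r p^r$ with $r<0$. Therefore $v_p(a_n)=r=v_p(\alpha_n)$, and $|a_n|_p=|\alpha_n|_p>1$.

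\textbf{Item (2).} Induct using $B_n=a_nB_{n-1}+B_{n-2}$.
\begin{itemize}
\item Base cases: $B_0=1$ and $B_1=a_1$, consistent with the empty sum.
\item Inductive step: $v_p(a_nB_{n-1})=v_p(a_1)+\dots+v_p(a_n)$, which by item (1) is strictly smaller than $v_p(B_{n-2})=v_p(a_1)+\dots+v_p(a_{n-2})$, since $v_p(a_{n-1})+v_p(a_n)<0$.
\end{itemize}
The strict inequality lets the ultrametric property give equality of valuations.

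\textbf{Items (3) and (4).} These are identical in spirit, starting from $A_0=a_0$ and $A_1=a_0a_1+1$.
\begin{itemize}
\item If $a_0\neq 0$, then $v_p(a_0)\le 0$, so $v_p(a_0a_1)<0=v_p(1)$. This gives $v_p(A_1)=v_p(a_0)+v_p(a_1)$, and the induction proceeds as in item (2).
\item If $a_0=0$, then $A_0=0$, $A_1=1$ and $A_2=a_2$. The recurrence from index $2$ onward mirrors that of $B_n$ shifted by one. This yields $v_p(A_n)=v_p(a_2)+\dots+v_p(a_n)$ for $n\ge 1$.
\end{itemize}
For $n=0$ the stated formula should be read with the convention that the empty sum applies to $n=1$; I would note this degenerate index explicitly.

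\textbf{Item (5).} Use the standard identity
\[ \alpha_0=\frac{A_n\alpha_{n+1}+A_{n-1}}{B_n\alpha_{n+1}+B_{n-1}}, \]
proved by induction from $\alpha_n=a_n+1/\alpha_{n+1}$. Combine it with the determinant relation $A_nB_{n-1}-A_{n-1}B_n=(-1)^{n+1}$, again by induction. Together these give
\[ \alpha_0-\frac{A_n}{B_n}=\frac{(-1)^n}{B_n(B_n\alpha_{n+1}+B_{n-1})}. \]
It remains to show $v_p(B_n\alpha_{n+1}+B_{n-1})=v_p(B_{n+1})$. Since $v_p(\alpha_{n+1})=v_p(a_{n+1})$ by item (1), the term $B_n\alpha_{n+1}$ has valuation $v_p(B_{n+1})$. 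By item (2) this is strictly less than $v_p(B_{n-1})$. The ultrametric inequality then finishes the argument.

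\textbf{Main obstacle.} The only delicate point is the bookkeeping of the strict inequalities needed for the ultrametric step at the base cases, especially $n=0$ and $a_0=0$. Everything else is routine.
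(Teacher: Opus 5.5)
The paper gives no proof of this proposition (it is only recalled from Browkin's work), and your argument is the standard one and is correct: ultrametric induction on the recurrences for $A_n,B_n$, with strictness coming from $v_p(a_{n-1})+v_p(a_n)<0$, plus the identity $\alpha_0-A_n/B_n=(-1)^n/\bigl(B_n(B_n\alpha_{n+1}+B_{n-1})\bigr)$. You are right that item (4) is literally false at $n=0$, since $A_0=0$, so it should be read for $n\ge 1$; the only wording fix is in item (1), where $s(\alpha_n)=\sum_{k=r}^{0}b_kp^k$ contains (rather than equals) the leading term $b_rp^r$, which gives $v_p(a_n)=r$ because $p\nmid b_r$.
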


Since $a_n, A_n, B_n$ are all rational numbers in $\mathbb Z[\frac{1}{p}]$, we also introduce the sequences of integers $(\tilde a_n)_{n \geq0}, (\tilde A_n)_{n \geq 0}, (\tilde B_n)_{n \geq 0}$ such that
\begin{equation}\label{eq:tilde}
a_n = \cfrac{\tilde a_n}{p^{k_n}}, \quad A_n = \cfrac{\tilde A_n}{p^{\sum_{i=\varepsilon}^n k_i}}, \quad B_n = \cfrac{\tilde B_n}{p^{\sum_{i=1}^n k_i}} 
\end{equation}
where $p \nmid \tilde a_n \tilde A_n \tilde B_n$, $k_n = -v_p(a_n)$, and $\varepsilon = 0 \text{ or } 2$ according to the previous proposition. Given these definitions, it is straightforward to check that
\begin{equation}\label{eq:rel_ABTilde}
    \Tilde{A}_{n} = \Tilde{a}_n\Tilde{A}_{n-1} + p^{k_n+k_{n-1}}\Tilde{A}_{n-2} , \quad
    \Tilde{B}_{n} = \Tilde{a}_n\Tilde{B}_{n-1} + p^{k_n+k_{n-1}}\Tilde{B}_{n-2} ,
\end{equation}
for all $n \geq 2$, from which it also follows that
\begin{equation} \label{eq:gcd}
\gcd(\tilde A_n, \tilde A_{n-1}) = 1, \quad \gcd(\tilde B_n, \tilde B_{n-1}) = 1.
\end{equation}

We now recall the definition of nice continued fractions introduced in \cite{CMT}, which enabled the authors to prove that, for every $s \geq 1$, there exist infinitely many square roots of integers whose Browkin  continued fraction expansion is periodic with period length $2^s$.

\begin{Definition}[\cite{CMT}, Definition 5.2]\label{def:niceCF}
A finite Browkin continued fraction $[a_0,\dots, a_{t-1}]$ is \textit{nice} if 
\begin{itemize}
 \item[a.] $|a_0|_p > 1$ and $|a_0| < \frac{p}{4}$;
 \item[b.] $\left|\frac{A_{t-1}}{A_{t-2}}\right| > \frac{4}{p}$;
 \item[c.] there exists an integer $q$ such that $\Tilde{B}_{t-1} \mid q \mid \Tilde{B}_{t-1}^2$ and the class of $q$ modulo $\Tilde{A}_{t-1}^2$ belongs to the multiplicative subgroup generated by the class of $p$. \end{itemize}
\end{Definition}

The following theorem is the main tool developed in \cite{CMT} that allowed the authors to prove their main results.

\begin{Theorem}[\cite{CMT}, Theorem 5.6]
Let $[a_0, \ldots, a_{t-1}]$ be a nice continued fraction, then there exist infinitely many $a_t$ such that $[a_0, \overline{a_1, \ldots , a_{t-1}, a_t, a_{t-1}, \ldots, a_1, 2a_0}]$ converges to a quadratic irrational $\frac{1}{p^{k_0}\sqrt{m}}$ for some $m \in \mathbb Z$.
\end{Theorem}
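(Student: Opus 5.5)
The plan is to exploit the palindromic structure of the period in the classical way: first show that any such expansion represents a number whose square is rational, then choose $a_t$ so that this rational number has the required shape $\frac{1}{p^{2k_0}m}$ with $m\in\mathbb Z$.

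\emph{Step 1: validity of the expansion.} For any $a_t$ in the image of $s$ with $|a_t|_p>1$, the periodic sequence $a_0,\overline{a_1,\ldots,a_t,\ldots,a_1,2a_0}$ must be the Browkin expansion of its $p$-adic limit. By condition a, $|2a_0|<\frac p2$ and $|2a_0|_p=|a_0|_p>1$, so $2a_0$ lies in the image of $s$. Every partial quotient after the first then has $p$-adic absolute value larger than $1$. By the standard facts recalled in the Proposition of Section \ref{sec:pre}, this forces $p$-adic convergence, and it also forces $s(\alpha_n)=a_n$ for every complete quotient. Hence the expansion computed by the algorithm is exactly the given one.

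\emph{Step 2: the square of the limit is rational.} Put $\beta=\alpha+a_0=[\overline{2a_0,a_1,\ldots,a_t,\ldots,a_1}]$. Let $P_j/Q_j$ denote the convergents of the period word $w=(2a_0,a_1,\ldots,a_t,\ldots,a_1)$. Because $(a_1,\ldots,a_t,\ldots,a_1)$ is a palindrome, the matrix product of the word has the usual symmetry. The equation $\beta=\frac{P\beta+P'}{Q\beta+Q'}$, rewritten for $\alpha=\beta-a_0$, then collapses to $\alpha^2=\frac{N}{M}$. Here $N$ and $M$ are explicit expressions in the convergents of $[a_1,\ldots,a_t,\ldots,a_1]$, i.e.\ the classical computation for $\sqrt D$.

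Next I express these convergents through $A_{t-1},A_{t-2},B_{t-1},B_{t-2}$ and $a_t$. Since the whole-period continuants depend affinely on $a_t$, one obtains
\[
\frac{1}{\alpha^{2}}=\frac{a_t X+Y}{Z},
\]
where $X,Y,Z$ are built from $A_{t-1},A_{t-2},B_{t-1},B_{t-2}$; I expect $Z$ to be essentially $A_{t-1}^2$ up to powers of $p$. In terms of the integer sequences of \eqref{eq:tilde}, the target $\frac{1}{\alpha^2}=p^{2k_0}m$ then becomes a congruence on $\tilde a_t$ modulo $\tilde A_{t-1}^2$, together with a condition on the power $p^{k_t}$.

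\emph{Step 3 (main obstacle): solving the integrality condition.} The requirement is that $p^{-2k_0}\alpha^{-2}$ be an integer, with $\tilde a_t$ of small archimedean size so that $a_t$ lies in the image of $s$. I would use condition c as follows. Pick $q$ with $\tilde B_{t-1}\mid q\mid\tilde B_{t-1}^2$ and $q\equiv p^{j}\pmod{\tilde A_{t-1}^2}$. The divisibility by $\tilde B_{t-1}$ cancels the $\tilde B$-part of the denominator, using the coprimality \eqref{eq:gcd} and the determinant identity $A_{t-1}B_{t-2}-A_{t-2}B_{t-1}=\pm1$. The power $p^j$ can be absorbed by choosing $k_t\equiv j$ modulo the order of $p$ in $(\mathbb Z/\tilde A_{t-1}^2\mathbb Z)^{*}$. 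This yields infinitely many admissible $k_t$, and hence infinitely many $a_t$.

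For each admissible $k_t$, I take $\tilde a_t$ in the resulting residue class modulo $\tilde A_{t-1}^2$. Then $|a_t|=|\tilde a_t|/p^{k_t}$ should be kept below $\frac p2$; condition b, $\left|\frac{A_{t-1}}{A_{t-2}}\right|>\frac4p$, is what I expect to guarantee that the residue class contains a representative of the right archimedean size, once the dependence on $A_{t-2}/A_{t-1}$ is made explicit. Finally, among these infinitely many choices I discard the finitely many, or sparse, ones for which $m$ is a perfect square. This can be done by varying $k_t$, since $m$ grows with it. The delicate point is the bookkeeping of the powers of $p$ and of $\tilde B_{t-1}$ in Step 3, which is where I expect most of the work to lie.
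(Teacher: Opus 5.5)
The paper does not reprove this statement; it is quoted from \cite{CMT}. I am therefore judging your argument on its own. Step 1, and the observation that the palindromic period makes $\alpha^2$ rational, are fine. The reduction at the heart of Step 2 is false, however, and Step 3 is built on it.

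Both the numerator and the denominator of $1/\alpha^2$ depend on $a_t$. Write $A_t=a_tA_{t-1}+A_{t-2}$, $B_t=a_tB_{t-1}+B_{t-2}$, and set $\kappa:=k_t+k_{t-1}$. The palindromic computation then gives
\[
\alpha^2=\frac{A_{t-1}(A_t+A_{t-2})}{B_{t-1}(B_t+B_{t-2})},
\qquad
\frac{1}{p^{2k_0}\alpha^2}=\frac{\tilde B_{t-1}\tilde Y}{\tilde A_{t-1}\tilde X},
\]
where $\tilde X=\tilde a_t\tilde A_{t-1}+2p^{\kappa}\tilde A_{t-2}$ and $\tilde Y=\tilde a_t\tilde B_{t-1}+2p^{\kappa}\tilde B_{t-2}$. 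As a check, for $\sqrt7=[2;\overline{1,1,1,4}]$ the first formula gives $\frac{3\cdot 7}{1\cdot 3}=7$.

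So $1/\alpha^2$ is a M\"obius function of $a_t$, not an affine one, and integrality of $m$ is not a congruence on $\tilde a_t$ modulo $\tilde A_{t-1}^2$. The determinant identity gives $\tilde X\tilde B_{t-1}-\tilde Y\tilde A_{t-1}=2(-1)^{t-1}p^{\kappa+\rho}$, with $\rho=\sum_{i=0}^{t-2}(k_i+k_{i+1})$. Hence
\[
m=\frac{\tilde B_{t-1}^2}{\tilde A_{t-1}^2}-\frac{2(-1)^{t-1}p^{\kappa+\rho}\,\tilde B_{t-1}}{\tilde A_{t-1}^2\,\tilde X}.
\]
Your instinct that $\tilde A_{t-1}^2$ is the relevant modulus is right. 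But since $p\nmid\tilde X$, integrality forces $\tilde X\mid 2\tilde B_{t-1}$. So $\tilde X$ ranges over finitely many values, and $\tilde a_t$ is \emph{determined} by $\tilde X$ and $k_t$, not picked from a residue class. This is also why your sentence about $\tilde B_{t-1}$ cancelling ``the $\tilde B$-part of the denominator'' has no counterpart in your own formula, whose denominator is $A_{t-1}^2$.

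The missing idea is the choice $\tilde X:=2(-1)^{t-1}q/\tilde B_{t-1}$, which is an integer because $\tilde B_{t-1}\mid q$. Take $k_t$ in the residue class modulo $\operatorname{ord}_{\tilde A_{t-1}^2}(p)$ for which $p^{\kappa+\rho}\equiv q\pmod{\tilde A_{t-1}^2}$; this is where condition c is used. Then $m=\frac{\tilde B_{t-1}^2}{q}\cdot\frac{q-p^{\kappa+\rho}}{\tilde A_{t-1}^2}\in\mathbb Z$, because $q\mid\tilde B_{t-1}^2$. Moreover, $\tilde X\tilde B_{t-1}\equiv 2(-1)^{t-1}p^{\kappa+\rho}\equiv 2p^{\kappa}\tilde A_{t-2}\tilde B_{t-1}\pmod{\tilde A_{t-1}}$. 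This shows that $\tilde a_t=(\tilde X-2p^{\kappa}\tilde A_{t-2})/\tilde A_{t-1}$ is an integer prime to $p$.

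Condition b then enters in a way your model cannot see. In your model, a residue class modulo $\tilde A_{t-1}^2$ meets the window $|\tilde a_t|<\frac p2p^{k_t}$ as soon as $p^{k_t+1}>\tilde A_{t-1}^2$, so b would be superfluous. In the correct argument,
\[
a_t=\frac{2(-1)^{t-1}q}{p^{k_t}\tilde A_{t-1}\tilde B_{t-1}}-\frac{2A_{t-2}}{A_{t-1}},
\]
which tends to $-2A_{t-2}/A_{t-1}$ as $k_t\to\infty$ within the admissible class. Condition b says exactly that $|2A_{t-2}/A_{t-1}|<\frac p2$. This yields infinitely many valid $a_t$ with pairwise distinct valuations $-k_t$.

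Finally, discarding perfect squares by ``sparseness'' is unjustified, but it is also unnecessary. By Step 1 the expansion is infinite, and rational numbers have finite Browkin expansions, so $m$ is never a square.
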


From the periodic expansion of a quadratic irrational $\frac{1}{p^{k_0}\sqrt{m}}$ as given in the previous theorem, it is possible to derive the periodic expansion of its inverse $p^{k_0}\sqrt{m} = [0, a_0, \overline{a_1, \ldots , a_{t-1}, a_t, a_{t-1}, \ldots, a_1, 2a_0}]$. Thus, proving that there exist infinitely many square roots of integers with periodic Browkin continued fraction expansions of every prescribed even length reduces to showing that, for every $t \geq 1$, there exists a nice continued fraction of length $t$. For this reason, the following conjectures were proposed.

\begin{Conjecture}[\cite{CMT}, Conjectures 5.7, 5.8, 5.9] \label{conj} \
\begin{enumerate}
\item For every $t \geq 1$ there exists a nice continued fraction of length $t$ (except when $t = 1$ and $p = 3$).
\item For every partial quotients $a_0, \ldots, a_{t-2} > 0$, with $|a_0| <p/4$ and $|a_0|_p>1$, there exists $a_{t-1}$ such that the Browkin continued fraction $[a_0, \ldots, a_{t-1}]$ is nice.
\item For every $t \geq 1$ there exists $a_{t-1}$ such that the Browkin continued fraction $[1/p, \ldots, 1/p, a_{t-1}]$ is nice.
\end{enumerate}
\end{Conjecture}

In the next section, we are able to prove the second point of the previous conjecture (and hence all the other ones) even without assuming that the partial quotients are positive. Consequently, we also prove that, for every integer $t$, there exist infinitely many square roots of integers with a periodic Browkin expansion of period length $2t$ (which also corresponds to Conjecture 5.10 of \cite{CMT} and to the problem left open since Bedocchi's works). Since the conjecture has already been settled for $t=1$, in what follows we will focus on the case $t>1$.

\section{Nice sequences of arbitrary length}\label{sec:nice}
Let us consider a sequence of partial quotients $a_0, \ldots, a_{t-2}$ such that $|a_0| < p/4$ and $|a_0|_p >1$. We will show that there exists $a_{t-1}  = \frac{\tilde a_{t-1}}{p^{k_{t-1}}} \in \mathbb Z[\frac{1}{p}] \cap \left(-\frac{p}{2},\frac{p}{2}\right)$ such that 
\begin{equation}\label{eq:fc}
[a_0, \ldots, a_{t-1}]
\end{equation}
is nice, proving Conjecture \ref{conj}.

Observe that the sequences $(\Tilde{A}_n)_{n\geq0}$ and $(\Tilde{B}_n)_{n\geq0}$ of the continued fraction \eqref{eq:fc}, for $n\le t-2$, only depend on the given partial quotients $a_0,\dots,a_{t-2}$. On the other hand, for $n=t-1$, we have 
\begin{equation} \label{eq:ric-tilde}
\Tilde{A}_{t-1} = \Tilde{a}_{t-1}\Tilde{A}_{t-2}
           + p^{\kappa}\Tilde{A}_{t-3}, \quad
\Tilde{B}_{t-1} = \Tilde{a}_{t-1}\Tilde{B}_{t-2}
           + p^{\kappa}\Tilde{B}_{t-3},
\end{equation}
where $\kappa := k_{t-1}+k_{t-2}$, and $k_{t-1},\Tilde{a}_{t-1}$ are the values that we want to find so that \eqref{eq:fc} is nice.
\begin{Lemma} \label{lemma:rel_sign2_5.8}
For the continued fraction \eqref{eq:fc}, it holds that 
\begin{equation*}
    \Tilde{A}_{t-2}\Tilde{B}_{t-1}
    - \Tilde{B}_{t-2}\Tilde{A}_{t-1}
    = (-1)^{t-1}\,p^{\kappa + \sigma}\,,
\end{equation*}
where $\sigma := \sum_{i=0}^{t-3}(k_i+k_{i+1})$.
\end{Lemma}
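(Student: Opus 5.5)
The identity is the classical determinant formula for convergents, rewritten in the integer normalisation \eqref{eq:tilde}. First I will recall that, since the $A_n,B_n$ satisfy the usual three-term recurrences with initial values $A_{-2}=0$, $A_{-1}=1$, $B_{-2}=1$, $B_{-1}=0$, we have
\[
A_{n-1}B_n - B_{n-1}A_n = (-1)^n \qquad (n\ge 0).
\]
This follows by induction on $n$. The case $n=0$ reads $A_{-1}B_0-B_{-1}A_0=1\cdot 1-0=1$. For the inductive step, substituting $A_n=a_nA_{n-1}+A_{n-2}$ and $B_n=a_nB_{n-1}+B_{n-2}$ gives $A_{n-1}B_n-B_{n-1}A_n=-(A_{n-2}B_{n-1}-B_{n-2}A_{n-1})$. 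This argument is purely algebraic, so it holds for Browkin partial quotients in $\mathbb Z[\frac1p]$. Applying it with $n=t-1$, which is admissible since $t\ge 2$, gives $A_{t-2}B_{t-1}-B_{t-2}A_{t-1}=(-1)^{t-1}$.

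Next I will pass to the tilde quantities. The hypothesis $|a_0|_p>1$ forces $a_0\neq 0$, so by the earlier Proposition we are in the case $\varepsilon=0$. Hence
\[
A_n=\tilde A_n\, p^{-\sum_{i=0}^n k_i}, \qquad B_n=\tilde B_n\, p^{-\sum_{i=1}^n k_i}.
\]
In the product $A_{t-2}B_{t-1}$ the power of $p$ in the denominator is $p^{E}$ with
\[
E=\sum_{i=0}^{t-2}k_i+\sum_{i=1}^{t-1}k_i .
\]
In $B_{t-2}A_{t-1}$ it is $\sum_{i=1}^{t-2}k_i+\sum_{i=0}^{t-1}k_i$, which is the same number $E$. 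Multiplying the determinant identity by $p^{E}$ therefore gives
\[
\tilde A_{t-2}\tilde B_{t-1}-\tilde B_{t-2}\tilde A_{t-1}=(-1)^{t-1}p^{E}.
\]

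The only remaining step is the bookkeeping $E=\kappa+\sigma$. We have $E=k_0+2(k_1+\dots+k_{t-2})+k_{t-1}$. On the other side, the telescoping-type sum is
\[
\sigma=\sum_{i=0}^{t-3}(k_i+k_{i+1})=k_0+2(k_1+\dots+k_{t-3})+k_{t-2},
\]
so adding $\kappa=k_{t-1}+k_{t-2}$ gives exactly $E$. I will check the boundary case $t=2$ separately: there $\sigma=k_0+k_1$ and $E=k_0+k_1+k_1$, and these agree with $\kappa+\sigma$.

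There is no real obstacle in this argument. The only points needing care are confirming that $\varepsilon=0$ under the standing hypothesis $|a_0|_p>1$ and handling the index ranges of the sums correctly.
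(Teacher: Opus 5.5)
Your proof is correct and is the same as the paper's: multiply the determinant identity $A_{t-2}B_{t-1}-B_{t-2}A_{t-1}=(-1)^{t-1}$ by $p^{E}$ with $E=\sum_{i=0}^{t-2}(k_i+k_{i+1})=\kappa+\sigma$. Your inductive proof of that identity, and your remark that $|a_0|_p>1$ forces $\varepsilon=0$, make explicit what the paper takes for granted.

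There is one slip, in your $t=2$ check. There $\sigma=\sum_{i=0}^{-1}(k_i+k_{i+1})$ is an empty sum, so $\sigma=0$ and $E=k_0+k_1=\kappa$. The values you wrote, $\sigma=k_0+k_1$ and $E=k_0+2k_1$, are both wrong and do not satisfy $E=\kappa+\sigma$, although the identity does hold with the correct values.
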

\begin{proof}
The classical fundamental relation for numerators and denominators of convergents also holds for Browkin continued fractions:
\begin{equation}\label{eq:fund}
    A_{t-2}B_{t-1}-B_{t-2}A_{t-1} = (-1)^{t-1}.
\end{equation}
Moreover, according to \eqref{eq:tilde}, it holds
\begin{equation}\label{eq:rel_betweenA_ATilde}
    \Tilde{A}_n=p^{\sum_{i=0}^nk_i}{A}_n, \qquad \Tilde{B}_n=p^{\sum_{i=1}^nk_i}{B}_n.
\end{equation}
Hence, we can write
\begin{equation*}
    \frac{\Tilde{A}_{t-2}\Tilde{B}_{t-1}}{p^{\sum_{i=0}^{t-2}k_i}p^{\sum_{i=1}^{t-1}k_i}}-\frac{\Tilde{B}_{t-2}\Tilde{A}_{t-1}}{p^{\sum_{i=1}^{t-2}k_i}p^{\sum_{i=0}^{t-1}k_i}} = (-1)^{t-1}.
\end{equation*}
That is, \begin{equation*}
    {\Tilde{A}_{t-2}\Tilde{B}_{t-1}}-{\Tilde{B}_{t-2}\Tilde{A}_{t-1}} = (-1)^{t-1}{p^{{\sum_{i=0}^{t-2}k_i+k_{i+1}}}}.
\end{equation*}
\end{proof}

\begin{Lemma}\label{lemma:powers_p}
    Let $p,r$ be two distinct odd primes. Let $h$ be the order of $p$ modulo $r$ and $w = {v}_r(p^h-1)$. Then, for every non-zero integer $c$ and $e \ge w$, the element  $1+cr^e$ is a power of $p$ modulo $r^{2e}$.
\end{Lemma}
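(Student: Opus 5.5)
We will prove the statement by showing that $p^h$ itself generates every class of the form $1+cr^e$ modulo $r^{2e}$, using the structure of the group $U_e := \{x \in (\mathbb Z/r^{2e}\mathbb Z)^* : x \equiv 1 \pmod{r^e}\}$.

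First, write $p^h = 1 + u r^w$ with $r \nmid u$. This is possible by the definitions of $h$ and $w$: we have $r \mid p^h-1$, so $w \ge 1$, and $w = v_r(p^h-1)$ exactly.

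Next, since $r$ is odd, the standard lifting-the-exponent lemma gives
\[ v_r\bigl((p^h)^{r^j}-1\bigr) = w + j \quad \text{for every } j \ge 0. \]
Setting $j = e-w \ge 0$, the element $g := p^{h r^{e-w}}$ satisfies $g = 1 + u' r^e$ with $r \nmid u'$.

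Then we use the map $1 + x r^e \mapsto x \bmod r^e$. Because $2e \ge 2e$, we have $(1+xr^e)(1+yr^e) \equiv 1+(x+y)r^e \pmod{r^{2e}}$. Hence this map is a group isomorphism from $U_e$ to the additive group $\mathbb Z/r^e\mathbb Z$, which is cyclic of order $r^e$. Under it, $g$ corresponds to $u'$, a unit modulo $r^e$, so $g$ generates $U_e$.

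Finally, the given element $1+cr^e$ lies in $U_e$, and it is invertible modulo $r^{2e}$ since it is $\equiv 1 \pmod r$. Choosing $n$ with $n u' \equiv c \pmod{r^e}$, we get $1+cr^e \equiv g^n = p^{h r^{e-w} n} \pmod{r^{2e}}$. This is a power of $p$ with a nonnegative exponent, as required. Note that $c \neq 0$ plays no role; the argument works for any integer $c$.

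The only genuine obstacle is the lifting step. Expand
\[ (1+ur^m)^r = 1 + ur^{m+1} + \binom r2 u^2 r^{2m} + \cdots. \]
All terms after the second have valuation at least $m+2$. For the $\binom r2$ term this uses $r$ odd (so $r \mid \binom r2$) together with $m \ge 1$; for the last term $u^r r^{rm}$ it uses $rm \ge 3m \ge m+2$. Hence the valuation increases by exactly one at each step, and induction on $j$ completes the lifting.
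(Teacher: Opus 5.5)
Your proof is correct and follows essentially the same route as the paper: the paper also sets $g=(p^h)^{r^{e-w}}$, uses lifting-the-exponent to get $g=1+ur^e$ with $r\nmid u$, and then takes $m$ with $mu\equiv c$, using the binomial congruence $(1+ur^e)^m\equiv 1+mur^e \pmod{r^{2e}}$ where you use the isomorphism $U_e\cong\mathbb Z/r^e\mathbb Z$. Your version also proves the lifting step by hand and checks that the exponent can be taken nonnegative; the paper leaves both points implicit.
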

\begin{proof}
Let us consider $g:=(p^h)^{r^{e-w}} $.
Then, it holds that 
\begin{equation*}
v_r(g-1) = v_r((p^h)^{r^{e-w}}-1) = v_r(p^h-1) + v_r({r^{e-w}}) = w+e-w= e,
\end{equation*}
where the second equality follows by the lifting-the-exponent lemma. Then, 
\begin{equation*}
g \equiv 1+ u r^e \mod r^{2e},
\end{equation*}
for a certain $u$, with  $r\nmid u$.
Let $m$ be an integer such that $m \cdot u \equiv c \mod r^{2e}$, which always exists since $u$ has an inverse modulo $r^{2e}$. Then, \begin{equation*}
g^m \equiv (1+ur^e)^m \equiv  1+mur^e \equiv 1+cr^e \mod r^{2e},
\end{equation*}
where the second equivalence follows directly from the binomial expansion. 
Since $g$ is a power of $p$, this shows that also $1+cr^e$ is a power of $p$ modulo $r^{2e}$.
\end{proof}

\begin{Theorem}\label{thm:C}
There exist infinitely many $\Tilde{a}_{t-1}$ and $k_{t-1}$ such that 
\begin{itemize}
\item[(i)]  $\Tilde{A}_{t-2} \mid \Tilde{B}_{t-1}$ 
\item[(ii)] $\Tilde{A}_{t-1}= \pm r^e$ 
\end{itemize}
for a certain prime $r\not=p$ and some integer $e \equiv 1 \pmod {\lambda(\tilde A_{t-2}^2)}$, where $\lambda$ is the Carmichael function. 
\end{Theorem}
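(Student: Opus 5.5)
The plan is to translate conditions (i) and (ii) into a single requirement: some prime $r$ must lie in a fixed residue class modulo $\tilde A_{t-2}^2$ and in a fixed real interval whose endpoints grow like $p^{k_{t-1}}$. Dirichlet's theorem in its quantitative form (the prime number theorem for arithmetic progressions) then supplies such primes. I will in fact aim for $e=1$, which trivially satisfies $e\equiv 1 \pmod{\lambda(\tilde A_{t-2}^2)}$. Write $M:=\tilde A_{t-2}^2$ and $\kappa=k_{t-1}+k_{t-2}$ as in \eqref{eq:ric-tilde}.

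\emph{Step 1: condition (i) as a congruence on $\tilde a_{t-1}$.} Apply Lemma \ref{lemma:rel_sign2_5.8} to the truncated fraction $[a_0,\dots,a_{t-2}]$. It gives $\tilde A_{t-3}\tilde B_{t-2}-\tilde B_{t-3}\tilde A_{t-2}=\pm p^{\sigma'}$ for some $\sigma'\ge 0$. Since $p\nmid \tilde A_{t-2}$, this shows $\gcd(\tilde B_{t-2},\tilde A_{t-2})=1$ and $\gcd(\tilde A_{t-3},\tilde A_{t-2})=1$. By \eqref{eq:ric-tilde}, condition (i) is then equivalent to
\[ \tilde a_{t-1}\equiv c\,p^{\kappa} \pmod{\tilde A_{t-2}},\qquad c:=-\tilde B_{t-3}\tilde B_{t-2}^{-1}. \]
Writing $\tilde a_{t-1}=cp^\kappa+j\tilde A_{t-2}$ and substituting into \eqref{eq:ric-tilde} gives
\[ \tilde A_{t-1}=jM+p^{\kappa}d,\qquad d:=c\tilde A_{t-2}+\tilde A_{t-3}. \]
Modulo $\tilde A_{t-2}$ we have $d\equiv \tilde B_{t-2}^{-1}(\tilde A_{t-3}\tilde B_{t-2}-\tilde B_{t-3}\tilde A_{t-2})\equiv \pm\tilde B_{t-2}^{-1}p^{\sigma'}$, so $d$ is coprime to $M$. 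Hence (i) holds exactly when $\tilde A_{t-1}\equiv b_\kappa:=p^\kappa d \pmod M$, a class coprime to $M$. Conversely, any integer $N\equiv b_\kappa \pmod M$ determines $\tilde a_{t-1}=(N-p^\kappa\tilde A_{t-3})/\tilde A_{t-2}\in\mathbb Z$ with (i) satisfied and $\tilde A_{t-1}=N$. The class $b_\kappa$ depends only on $\kappa$ modulo the order of $p$ mod $M$. I will therefore fix a residue class of $k_{t-1}$ modulo $\lambda(M)$, so that $b:=b_\kappa$ is fixed, and let $k_{t-1}\to\infty$ within this class.

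\emph{Step 2: admissibility of $a_{t-1}$.} We need $a_{t-1}=\tilde a_{t-1}/p^{k_{t-1}}$ to be a genuine partial quotient. That means $p\nmid\tilde a_{t-1}$ and $|\tilde a_{t-1}|<p^{k_{t-1}+1}/2$. For $k_{t-1}\ge1$ we have $\tilde a_{t-1}\tilde A_{t-2}\equiv N \pmod p$, so $p\nmid\tilde a_{t-1}$ as soon as $N=\pm r$ with $r\neq p$. The size condition says $N$ must lie in the interval
\[ I_k:=p^{k_{t-1}}\Bigl(X-\tfrac{p}{2}|Y|,\;X+\tfrac{p}{2}|Y|\Bigr),\qquad X:=p^{k_{t-2}}\tilde A_{t-3},\; Y:=\tilde A_{t-2}. \]
This interval has fixed shape and is dilated by $p^{k_{t-1}}$. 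Since $X\neq \pm\frac p2 |Y|$ (by parity, as $p$ is odd), at least one of $I_k\cap(0,\infty)$ and $-I_k\cap(0,\infty)$ contains an interval of the form $[x,(1+\delta)x]$. Here $\delta>0$ is fixed and $x\asymp p^{k_{t-1}}\to\infty$.

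\emph{Step 3: producing primes.} Choose the sign $\pm$ according to Step 2. We need a prime $r$ with $\pm r\equiv b\pmod M$ and $r\in[x,(1+\delta)x]$. By the prime number theorem for arithmetic progressions, the number of primes $r\equiv \pm b \pmod M$ in $[x,(1+\delta)x]$ is asymptotic to $\delta x/(\varphi(M)\log x)$. This count is positive for all large $x$, i.e. for all large $k_{t-1}$ in the fixed class. Such $r$ is automatically odd and different from $p$. Setting $N=\pm r$, $e=1$, and $\tilde a_{t-1}$ as in Step 1 gives (i) and (ii). Distinct values of $k_{t-1}$ give distinct pairs, so there are infinitely many.

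The main obstacle is Step 2/3: the center $p^\kappa\tilde A_{t-3}/\tilde A_{t-2}$ can dominate, so plain Dirichlet is not enough, and one genuinely needs primes in short multiplicative intervals $[x,(1+\delta)x]$ of a progression. If one preferred to avoid PNT for progressions, the fallback is to fix $r$ and vary $e$ in the class $1 \bmod \lambda(M)$. This works only when the ratio of the endpoints of $I_k$ is large enough relative to $r^{\lambda(M)}$, so I expect the analytic route to be the clean one.
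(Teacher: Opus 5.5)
Your proof is correct for the statement as written, but it is organized differently from the paper's. The paper imposes $\kappa\equiv 1\pmod{\operatorname{ord}_{\tilde A_{t-2}^2}(p)}$ and $e\equiv 1\pmod{\lambda(\tilde A_{t-2}^2)}$. This collapses the target congruence to a single residue condition on $r$ modulo $\tilde A_{t-2}^2$, so only the qualitative Dirichlet theorem is needed. The archimedean constraints ($|a_{t-1}|<p/2$ and condition (b)) are deferred to Theorem~\ref{thm:B}. There $r$ is kept fixed, and $e=1+n_1\lambda(\tilde A_{t-2}^2)$ and $k_{t-1}=1-k_{t-2}+n_2\operatorname{ord}_{\tilde A_{t-2}^2}(p)$ are varied simultaneously. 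Irrationality of $\log_p r$ makes the values $n_1\lambda(\tilde A_{t-2}^2)\log_p r-n_2\operatorname{ord}_{\tilde A_{t-2}^2}(p)$ dense in $\mathbb R$. So the fallback you set aside does work, with no condition on the ratio of the endpoints of $I_k$, once $e$ and $k_{t-1}$ move together.

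You instead freeze $e=1$ and move only $k_{t-1}$. This costs you the prime number theorem for progressions (primes in $[x,(1+\delta)x]$). In exchange, admissibility, and in the same way condition (b), is built directly into the construction. The price of $e=1$ shows up right after the theorem. To get condition (c), the paper applies Lemma~\ref{lemma:powers_p}, which requires $e\ge v_r(p^{\operatorname{ord}_r(p)}-1)$; the paper secures this by taking $e$ large for its fixed $r$. With $e=1$ you would need $v_r(p^{r-1}-1)=1$, i.e.\ $r$ not a Wieferich prime to base $p$. Nothing guarantees this for the primes your counting produces: even the infinitude of non-Wieferich primes is not known unconditionally (Silverman derived it from the abc conjecture). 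So your argument proves Theorem~\ref{thm:C}, but it is not a drop-in replacement for it in the proof of niceness.

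A harmless slip: the claim $X\neq\pm\frac p2|Y|$ ``by parity'' is false in general. For $t=2$ and $a_0=2/p$ one gets $X=p=\frac p2|Y|$. You do not need it, though. The interval $(X-\frac p2|Y|,X+\frac p2|Y|)$ has length $p|Y|>0$, so it or its negative always meets $(0,\infty)$ in a nondegenerate interval.
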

\begin{proof}
We start by observing that 
\begin{align*}
    \Tilde{A}_{t-2} \mid \Tilde{B}_{t-1} &\Leftrightarrow  \Tilde{B}_{t-1} \equiv 0 \mod \Tilde{A}_{t-2}  \\ &\Leftrightarrow \Tilde{a}_{t-1}\Tilde{B}_{t-2} + p^{\kappa}\Tilde{B}_{t-3}\equiv 0 \mod \Tilde{A}_{t-2}\\ &\Leftrightarrow   \Tilde{a}_{t-1}  \equiv -p^{\kappa}\Tilde{B}_{t-3}\Tilde{B}_{t-2}^{-1} \mod \Tilde{A}_{t-2} \,\,,
\end{align*}
where the last equivalence follows from $\gcd(\Tilde{B}_{t-2},\Tilde{A}_{t-2})=1$ (which can be derived from \eqref{eq:gcd} and \eqref{eq:fund}). Thus, there are infinitely many $\Tilde a_{t-1}$ such that $\Tilde A_{t-2} \mid \Tilde B_{t-1}$.
Now, we want to show that there exist $r,e$ (as in the hypothesis of the Theorem) such that $\Tilde{A}_{t-1}= \varepsilon r^e$, for $\varepsilon =\pm1$. Since we have \begin{align*}
    \Tilde{A}_{t-1} &= \Tilde{a}_{t-1}\Tilde{A}_{t-2} + p^{\kappa}\Tilde{A}_{t-3} \\  &\equiv (-p^{\kappa}\Tilde{B}_{t-3}\Tilde{B}_{t-2}^{-1})\Tilde{A}_{t-2} + p^{\kappa}\Tilde{A}_{t-3} \mod \Tilde{A}_{t-2}^2 \\
    & \equiv p^{\kappa}(\Tilde{A}_{t-3}-\Tilde{B}_{t-3}\Tilde{B}_{t-2}^{-1}\Tilde{A}_{t-2})  \mod \Tilde{A}_{t-2}^2,
\end{align*}
then to satisfy $(ii)$, we want \begin{equation}\label{eq:r_condition_5.8}
\varepsilon r^e \equiv p^{\kappa}(\Tilde{A}_{t-3}-\Tilde{B}_{t-3}\Tilde{B}_{t-2}^{-1}\Tilde{A}_{t-2})  \mod \Tilde{A}_{t-2}^2.  
\end{equation}
It is easy to verify that such $r$ exists if we require that \begin{itemize}
    \item $e\equiv 1 \mod \lambda(\Tilde{A}_{t-2}^2)$, where $ \lambda$ is the Carmichael function;
    \item $\kappa \equiv 1 \mod \ord_{\Tilde{A}_{t-2}^2}(p)$; 
\end{itemize}
According to these conditions, \eqref{eq:r_condition_5.8} becomes \begin{equation*}
    r \equiv \varepsilon p(\Tilde{A}_{t-3}-\Tilde{B}_{t-3}\Tilde{B}_{t-2}^{-1}\Tilde{A}_{t-2}) \mod \Tilde{A}_{t-2}^2,    
\end{equation*} 
and by Dirichlet's theorem on arithmetic progressions there exist infinitely many primes $r$ satisfying the above congruence provided that $$\gcd(\Tilde{A}_{t-3}-\Tilde{B}_{t-3}\Tilde{B}_{t-2}^{-1}\Tilde{A}_{t-2},\Tilde{A}_{t-2}^2)=1.$$ 
Assume by contradiction that $\ell$ is a common prime divisor. 
Hence, $\ell \mid \Tilde{A}_{t-2}$ and from $\ell \mid \Tilde{A}_{t-3}-\Tilde{B}_{t-3}\Tilde{B}_{t-2}^{-1}\Tilde{A}_{t-2}$, we also get $\ell \mid \Tilde{A}_{t-3}$, which is not possible since $\gcd(\Tilde{A}_{t-2},\Tilde{A}_{t-3})=1$, thereby concluding the proof.
\end{proof}

Exploiting the previous theorem, we are able to find infinitely many $a_{t-1}$ such that condition $(c)$ of Definition \ref{def:niceCF} is fulfilled. Indeed, once $a_{t-1}$ is selected in order to satisfy conditions $(i)$ and $(ii)$ of Theorem \ref{thm:C}, it is sufficient to consider $q$ as
\begin{equation}\label{eq:q5.8}
    q = (-1)^{t-1}\Tilde{A}_{t-2}\Tilde{B}_{t-1}.
\end{equation}
Now, it is easy to verify that $\Tilde{B}_{t-1} \mid q \mid \Tilde{B}_{t-1}^2$. 

By Lemma \ref{lemma:rel_sign2_5.8}, we have
\begin{align*}
    q &= (-1)^{t-1}\Tilde{A}_{t-2}\Tilde{B}_{t-1} = \\ &=(-1)^{t-1}((-1)^{t-1}p^{\kappa + \sigma}+ \Tilde{B}_{t-2}\Tilde{A}_{t-1})  = \\&=p^{\kappa + \sigma} (1+ (-1)^{t-1}p^{-\kappa -\sigma}\Tilde{B}_{t-2}\Tilde{A}_{t-1}).
\end{align*}

Moreover, observe that we can always choose $e$ in Theorem \ref{thm:C} sufficiently large so that
\[
e \geq v_r(p^{\operatorname{ord}_r(p)}-1).
\]
Thus, according to Lemma \ref{lemma:powers_p}, it follows that $(1+ (-1)^{t-1}p^{-\kappa  -\sigma}\Tilde{B}_{t-2}\Tilde{A}_{t-1})$ is a power of $p$ modulo $ \Tilde{A}_{t-1}^2$, i.e., $q$ belongs to the multiplicative group generated by $p$ modulo $\Tilde{A}_{t-1}^2$.

We still need to prove that among the infinitely many values of $a_{t-1}$ provided by Theorem \ref{thm:C}, there exist $\tilde a_{t-1}$ and $k_{t-1}$ such that also condition $(b)$ of Definition \ref{def:niceCF} is satisfied and $a_{t-1}$ is an allowed partial quotient for Browkin continued fractions, i.e., $|a_{t-1}| < p/2$.

\begin{Theorem} \label{thm:B}
There exists $a_{t-1} = \frac{\tilde a_{t-1}}{p^{k_{t-1}}}$ satisfying conditions $(i)$, $(ii)$ of Theorem \ref{thm:C} such that
\[ \left| \cfrac{A_{t-1}}{A_{t-2}}\right| > \cfrac{4}{p}, \quad |a_{t-1}| < \cfrac{p}{2}. \]
\end{Theorem}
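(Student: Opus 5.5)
The plan is to turn both inequalities into a single condition on the real number $x := r^e/p^{k_{t-1}}$ and then use the remaining freedom in $e$ and $k_{t-1}$ to land $x$ in the required interval.

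\textbf{Step 1: rewrite the two conditions.} Since $A_n = \tilde A_n / p^{\sum_{i\le n} k_i}$, we have
\[
\frac{A_{t-1}}{A_{t-2}} = \frac{\tilde A_{t-1}}{p^{k_{t-1}}\tilde A_{t-2}}, \qquad a_{t-1}=\frac{\tilde a_{t-1}}{p^{k_{t-1}}}, \qquad \tilde a_{t-1}\tilde A_{t-2} = \tilde A_{t-1}-p^{\kappa}\tilde A_{t-3}.
\]
With $\tilde A_{t-1}=\varepsilon r^e$ and $\kappa = k_{t-1}+k_{t-2}$, divide by $p^{k_{t-1}}$. The two required inequalities become
\[
x>\frac{4|\tilde A_{t-2}|}{p}, \qquad \bigl|\varepsilon x - p^{k_{t-2}}\tilde A_{t-3}\bigr| < \frac{p\,|\tilde A_{t-2}|}{2}.
\]
Choose $\varepsilon$ equal to the sign of $\tilde A_{t-3}$ (either sign if $\tilde A_{t-3}=0$). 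Then both inequalities hold as soon as $x$ lies in the nonempty open interval
\[
I=\Bigl(\max\Bigl\{\tfrac{4|\tilde A_{t-2}|}{p},\; p^{k_{t-2}}|\tilde A_{t-3}|-\tfrac{p|\tilde A_{t-2}|}{2}\Bigr\},\; p^{k_{t-2}}|\tilde A_{t-3}|+\tfrac{p|\tilde A_{t-2}|}{2}\Bigr).
\]
It is nonempty because $p|\tilde A_{t-2}|/2>4|\tilde A_{t-2}|/p$ for $p\ge3$. Crucially, $I$ depends only on the fixed data $a_0,\dots,a_{t-2}$.

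\textbf{Step 2: fix $\varepsilon$ and $r$.} Follow the proof of Theorem \ref{thm:C} with this $\varepsilon$ and pick, by Dirichlet, a prime $r\neq p$ with
\[
r\equiv \varepsilon p(\tilde A_{t-3}-\tilde B_{t-3}\tilde B_{t-2}^{-1}\tilde A_{t-2}) \pmod{\tilde A_{t-2}^2}.
\]
Now fix $r$. The remaining freedom is:
\begin{itemize}
\item[$\bullet$] $e$, subject to $e\equiv1 \pmod{\lambda(\tilde A_{t-2}^2)}$ and $e\ge v_r(p^{\ord_r(p)}-1)$; the latter is now a fixed bound, so Lemma \ref{lemma:powers_p} applies and condition $(c)$ survives;
\item[$\bullet$] $k_{t-1}\ge1$, subject to $\kappa\equiv1 \pmod{\ord_{\tilde A_{t-2}^2}(p)}$.
\end{itemize}
For any such pair, \eqref{eq:r_condition_5.8} gives $\varepsilon r^e\equiv p^{\kappa}\tilde A_{t-3} \pmod{\tilde A_{t-2}}$. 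Hence
\[
\tilde a_{t-1}=(\varepsilon r^e-p^\kappa\tilde A_{t-3})/\tilde A_{t-2}
\]
is an integer in the residue class required by $(i)$, and $\tilde A_{t-1}=\varepsilon r^e$ gives $(ii)$. Also $p\nmid\tilde a_{t-1}$, since $\tilde a_{t-1}\tilde A_{t-2}\equiv \varepsilon r^e\not\equiv0 \pmod p$ (using $\kappa\ge1$). So $a_{t-1}$ is a genuine element of $\mathbb Z[\frac1p]$ with $v_p(a_{t-1})=-k_{t-1}$. Once $|a_{t-1}|<p/2$ holds, it lies in the image of $s$ and is an admissible Browkin partial quotient.

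\textbf{Step 3 (main obstacle): hit the interval $I$.} Write $e=1+\lambda m$ and $k_{t-1}=k^*+\omega n$, where $\lambda=\lambda(\tilde A_{t-2}^2)$, $\omega=\ord_{\tilde A_{t-2}^2}(p)$, and $k^*\ge1$ is a fixed admissible value. Then
\[
\log x=\mathrm{const}+m\lambda\log r-n\omega\log p .
\]
Because $r\neq p$ are distinct primes, $\log r/\log p$ is irrational, so $\lambda\log r/(\omega\log p)$ is irrational too. By Kronecker/Weyl, the values $m\lambda\log r-n\omega\log p$ with $m,n\ge0$ are dense in $\mathbb R$ and hit any open interval for arbitrarily large $m$. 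In particular we can take $m$ large enough that $e$ exceeds the bound from Lemma \ref{lemma:powers_p}, with $n$ then forced to be large, so $k_{t-1}\ge1$. Thus $\log x\in\log I$ for infinitely many admissible $(e,k_{t-1})$, which gives both inequalities and completes the argument.

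The only delicate point is making sure that fixing $r$ before choosing $e$ keeps all the earlier constraints compatible. This is why I fix $r$ first: the bound $v_r(p^{\ord_r p}-1)$ then becomes a constant, and the sign $\varepsilon$ has already been built into the congruence for $r$.
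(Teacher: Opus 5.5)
Your proposal is correct and follows essentially the same route as the paper. With $\varepsilon=\operatorname{sign}(\tilde A_{t-3})$, both inequalities reduce to $r^e/p^{k_{t-1}}$ lying in the nonempty interval $(\eta,\mu)$. That interval is then hit by taking $e=1+n_1\lambda(\tilde A_{t-2}^2)$ and $k_{t-1}\equiv 1-k_{t-2} \pmod{\operatorname{ord}_{\tilde A_{t-2}^2}(p)}$ and using density from the irrationality of $\log_p r$. Your extra checks (fixing $r$ after $\varepsilon$, verifying $p\nmid\tilde a_{t-1}$, and ensuring $k_{t-1}\ge 1$ with $e$ large) only make explicit details the paper leaves implicit.
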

\begin{proof}
From the proof of Theorem \ref{thm:C} and by \eqref{eq:ric-tilde}, we derive that the values $\Tilde{a}_{t-1}$ and $k_{t-1}$ are selected such that \begin{align*}
    \Tilde{a}_{t-1} = \frac{\varepsilon r^e-p^{\kappa}\Tilde{A}_{t-3}}{\Tilde{A}_{t-2}},
\end{align*}
where \begin{itemize}
    \item[-] $\varepsilon = \pm 1$ and $r \not=p$ prime;
    \item[-] $e\equiv 1 \mod \lambda(\Tilde{A}_{t-2}^2)$;
    \item[-] $\kappa \equiv 1 \mod \ord_{\Tilde{A}_{t-2}^2}(p)$, that is $k_{t-1} \equiv 1 - k_{t-2}\mod \ord_{\Tilde{A}_{t-2}^2}(p)$.
\end{itemize}
By equation \eqref{eq:rel_betweenA_ATilde}, we have
\[
   \frac{A_{t-1}}{A_{t-2}}=\frac{\Tilde A_{t-1}}{ p^{k_{t-1}}\Tilde A_{t-2}} = \frac{\varepsilon\,{r^{e}}}{p^{k_{t-1}}\Tilde A_{t-2}}.
\]
Condition $(b)$ of Definition \ref{def:niceCF}, namely $\bigl|\frac{A_{t-1}}{A_{t-2}}\bigr|>\tfrac4p$, becomes
\begin{equation}\label{eq:condition_b_r}
      \frac{r^{e}}{p^{k_{t-1}}}>\tfrac4p\,\bigl|\Tilde A_{t-2}\bigr| ,
\end{equation}
independently of $\varepsilon$. 
Moreover, to have a valid partial quotient, we also require that \begin{equation} \label{eq:cond-bro}
    | a_{t-1} |<\frac{p}{2},
\end{equation}
which can be written as \begin{align*}
    \left| \frac{\varepsilon r^e-p^\kappa\Tilde{A}_{t-3}}{p^{k_{t-1}}\Tilde{A}_{t-2}} \right|<\frac{p}{2} &\quad \Leftrightarrow \quad  {\left|\frac{\varepsilon r^e}{p^{k_{t-1}}}-p^{k_{t-2}}\Tilde{A}_{t-3}\right|} <\frac{p}{2}{\left|\Tilde{A}_{t-2}\right|}.
\end{align*}
Choose now $\varepsilon:=\operatorname{sign}(\Tilde A_{t-3})$, then \begin{align*}
    \varepsilon p^{k_{t-2}}\Tilde A_{t-3} &=p^{k_{t-2}}\bigl|\Tilde A_{t-3}\bigr| \\
    \bigl|\varepsilon \frac{r^{e}}{p^{k_{t-1}}}-p^{k_{t-2}}\Tilde A_{t-3}\bigr|&=\bigl|\,\frac{r^{e}}{p^{k_{t-1}}}-p^{k_{t-2}}|\Tilde A_{t-3}|\,\bigr|,
\end{align*} 
and condition \eqref{eq:cond-bro} becomes
\begin{equation}\label{eq:admissibility}
   p^{k_{t-2}}|\Tilde A_{t-3}|-\tfrac p2\bigl|\Tilde A_{t-2}\bigr|<\frac{r^{e}}{p^{k_{t-1}}}<p^{k_{t-2}}|\Tilde A_{t-3}|+\tfrac p2\bigl|\Tilde A_{t-2}\bigr| .
\end{equation}
Hence, equations \eqref{eq:admissibility} and \eqref{eq:condition_b_r} give \begin{equation*}
    \max\left(\tfrac4p|\Tilde{A}_{t-2}|\,,\,p^{k_{t-2}}|\Tilde A_{t-3}|-\tfrac p2\bigl|\Tilde A_{t-2}\bigr|\right) <\frac{r^{e}}{p^{k_{t-1}}}<  p^{k_{t-2}}|\Tilde A_{t-3}|+\tfrac p2\bigl|\Tilde A_{t-2}\bigr| .
\end{equation*}
By denoting with $\eta:= \max\left(\tfrac4p|\Tilde{A}_{t-2}|\,,\,p^{k_{t-2}}|\Tilde A_{t-3}|-\tfrac p2\bigl|\Tilde A_{t-2}\bigr|\right) $ and $\mu :=p^{k_{t-2}}|\Tilde A_{t-3}|+\tfrac p2\bigl|\Tilde A_{t-2}\bigr|$, we can write \begin{equation}\label{eq:etamu}
    \log_p(\eta) < e\log_p(r)-k_{t-1} <\log_p\left(\mu\right).
\end{equation}
Thus, the last thing to prove is that there exist such $e$ and $k_{t-1}$ satisfying the above inequalities. We recall that 
$$e = 1 + n_1 \lambda(\Tilde{A}_{t-2}^2), \quad k_{t-1}= 1- k_{t-2}+ n_2\ord_{\Tilde{A}_{t-2}^2}(p)$$ 
for some integers $n_1, n_2$. 
Hence, we can rewrite \eqref{eq:etamu} as  
\begin{equation}
    \eta' <  n_1 \lambda(\Tilde{A}_{t-2}^2)\log_p(r)- n_2\ord_{\Tilde{A}_{t-2}^2}(p) <\mu'.
\end{equation}
where $\eta' =\log_p(\eta) - \log_p(r) + 1- k_{t-2}$ and  $\mu' = \log_p(\mu) - \log_p(r) + 1- k_{t-2}$.
 We finally need to prove that
 \begin{itemize}
     \item[1.] the interval $(\eta', \mu') \neq \emptyset$;
     \item[2.] there exist two integers $n_1,n_2$ such that $n_1 \lambda(\Tilde{A}_{t-2}^2)\log_p(r)-n_2\ord_{\Tilde{A}_{t-2}^2}(p)$ is contained in the interval $(\eta', \mu')$;
 \end{itemize}
The first assumption is equivalent to showing that the interval $(\,\log_p(\eta), \log_p(\mu)\,)$ is non-empty. Since the logarithm is strictly increasing, it is sufficient to prove that $\eta < \mu$. Since $p \ge 3$, both $\tfrac4p|\Tilde{A}_{t-2}|\,,\,p^{k_{t-2}}|\Tilde A_{t-3}|-\tfrac p2\bigl|\Tilde A_{t-2}\bigr|$ are strictly smaller than $p^{k_{t-2}}|\Tilde A_{t-3}|+\tfrac p2\bigl|\Tilde A_{t-2}|$, hence $\eta < \mu$.

For the second assumption, we start by observing that 
\begin{itemize}
    \item $\log_p(r) \notin \mathbb{Q}$ (otherwise $r=p^{a/b}$ for some $a,b$, which is impossible for two distinct primes);
    \item $\frac{\lambda(\Tilde{A}_{t-2}^2)\log_p(r)}{\ord_{\Tilde{A}_{t-2}^2}(p)}$ is irrational (since $\log_p(r)$ is irrational and $\lambda(\Tilde{A}_{t-2}^2),\ord_{\Tilde{A}_{t-2}^2}(p)$ are integers);
\end{itemize}
Moreover, in general, given an irrational number $\alpha$, the sequence  $(n\alpha \mod 1)_{n\geq0}$ is equidistributed. This also implies that the set $\{n\alpha \mod 1 \mid n \in \mathbb{N} \}$ is dense in $[0,1)$.
From these observations, we can conclude that \begin{equation*}
    \{n_1 \lambda(\Tilde{A}_{t-2}^2)\log_p(r) \mod \ord_{\Tilde{A}_{t-2}^2}(p) \mid n_1 \in \mathbb{N} \}
\end{equation*}
is dense in $[0,\ord_{\Tilde{A}_{t-2}^2}(p))$. Hence, 
 \begin{equation*}
    \{n_1 \lambda(\Tilde{A}_{t-2}^2)\log_p(r) -n_2\ord_{\Tilde{A}_{t-2}^2}(p) \mid n_1 ,n_2\in \mathbb{N} \}
\end{equation*}
is dense in $\mathbb{R}$. This means that for every non-empty open interval of $\mathbb{R}$ there exist infinitely many values of the form $n_1 \lambda(\Tilde{A}_{t-2}^2)\log_p(r) -n_2\ord_{\Tilde{A}_{t-2}^2}(p)$. Thus, since $(\eta', \mu') \neq \emptyset$, our argument is proved.
\end{proof}

Hence, Theorems \ref{thm:C} and \ref{thm:B} show that we are always able to find $a_{t-1} \in \mathbb Z[\frac{1}{p}] \cap \left(-\frac{p}{2}, \frac{p}{2}\right)$ such that $[a_0, \ldots, a_{t-2}, a_{t-1}]$ is a nice continued fraction of length $t$, provided that in the starting sequence $a_0, \ldots, a_{t-2}$, the first partial quotient satisfies condition $(a)$ of Definition \ref{def:niceCF}. Moreover, these theorems provide a constructive method to determine such a value, although it is far from being efficient, as we will discuss in Section \ref{sec:comp-nice}. 

\section{Some bounds on the period length}\label{sec:bound}

In \cite{CMT}, the authors established sufficient conditions for the periodicity of Browkin continued fractions in terms of norms of the complete quotients and, under these conditions, also obtained a bound on the period length. We recall these results below.

\begin{Theorem} \label{thm:CMT}
Let $\alpha \in \mathbb Q_p$ be a quadratic irrational whose discriminant $\Delta > 0$ is a non-square integer and for every $n > 0$ denote by $\xi_n$ and $\xi_n'$ the two images of $\alpha_n$ in $\mathbb C$. Assume that $\exists n_0 > 0$ such that
\begin{enumerate}
\item $N(\xi_n) < 0$ for every $n \in [n_0, n_0 + K]$ with 
\begin{equation} \label{eq:bound-CMT}
K := (2d + 1) \Delta + 1 - \frac{d(d+1)(2d+1)}{3},
\end{equation}
where $d = \lfloor \sqrt{\Delta} \rfloor$, then the Browkin continued fraction expansion of $\alpha$ is periodic with period of length at most $K$ (\cite[Proposition 4.8]{CMT});
\item $N(\xi_n)$ and $N(\xi_{n+1})$ have alternating signs for every $n \in [n_0, n_0 + 2K]$, then the Browkin continued fraction expansion of $\alpha$ is periodic of period of length at most $2K$ (\cite[Proposition 4.11]{CMT}).
\end{enumerate}
\end{Theorem}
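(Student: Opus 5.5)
The plan is to attach to each complete quotient an integral quadratic form of fixed discriminant, show that negativity of the norm leaves only finitely many such forms, and conclude by pigeonhole.

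\textbf{Setting up the forms.} For every $n$ I will write the minimal polynomial of $\alpha_n$ over $\mathbb Z$ as a primitive integral form $f_n(x)=a_nx^2+b_nx+c_n$. Here $\alpha_{n+1}=1/(\alpha_n-a_n)$ is obtained from $\alpha_n$ by a matrix of determinant $-1$ with entries in $\mathbb Z[\frac1p]$. So $f_{n+1}$ is obtained from $f_n$ by substituting this M\"obius transformation and then rescaling by a power of $p$ to make it primitive and integral. The first task is to check that, with this normalisation, the discriminant $b_n^2-4a_nc_n$ is the same $\Delta$ for every $n\geq 1$. I would do this by computing the transformed coefficients explicitly: the transformed discriminant is a square-of-determinant times $\Delta$, and the primitivity normalisation absorbs exactly the powers of $p$. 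This is the step I expect to be the main obstacle, because a careless renormalisation could change $\Delta$ by a factor $p^{2j}$. If exact invariance fails, I would instead prove that the discriminant is bounded, and rerun the count below with that bound.

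\textbf{Counting forms of negative norm.} The two roots of $f_n$ are $\xi_n,\xi_n'$, so $N(\xi_n)=c_n/a_n$. Hence $N(\xi_n)<0$ means $a_nc_n<0$, and therefore
\[
\Delta=b_n^2+4|a_nc_n|>b_n^2 .
\]
This gives $|b_n|\le d$, and for each fixed $b$ the product $|a_nc_n|=(\Delta-b^2)/4$ is determined. I would then bound the number of admissible pairs $(a_n,c_n)$ for each $b$ by roughly $\Delta-b^2$: they come from the divisors of $(\Delta-b^2)/4$ together with the two possible signs. Summing over $b\in[-d,d]$ gives
\[
\sum_{b=-d}^{d}(\Delta-b^2)+1=(2d+1)\Delta+1-\frac{d(d+1)(2d+1)}{3}=K .
\]
The crude divisor-count bound is enough here; the aim is only to show that at most $K$ distinct forms can occur.

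\textbf{Part (1).} The indices $n\in[n_0,n_0+K]$ are $K+1$ in number, and every form $f_n$ for these indices lies in the set of at most $K$ forms counted above. By pigeonhole there are indices $n_0\le i<j\le n_0+K$ with $f_i=f_j$. Then $\alpha_i$ and $\alpha_j$ are roots of the same polynomial. Both are obtained inside $\mathbb Q_p$ by the same fixed embedding and the same chain of transformations, so they are the same root; if needed, I would fix this choice via the sign convention on $b_n$. Thus $\alpha_i=\alpha_j$, and since the algorithm is deterministic the expansion is periodic with period $j-i\le K$.

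\textbf{Part (2).} When the signs of the norms alternate on $[n_0,n_0+2K]$, every other index carries a negative norm. Among these $2K+1$ indices, the negative-norm ones form an arithmetic progression of step $2$ containing at least $K+1$ terms. Applying the pigeonhole argument of Part (1) to these indices gives $\alpha_i=\alpha_j$ with $0<j-i\le 2K$. Hence the expansion is periodic with period at most $2K$.
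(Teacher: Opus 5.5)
Your overall strategy (only finitely many complete quotients can have negative norm, then pigeonhole) is the one behind this result and behind the paper's Propositions~\ref{prop:improv} and~\ref{prop:neg}. However, the step you flag as the main obstacle is a genuine gap, and the invariant it produces is not the $\Delta$ of the statement. In the paper's setup, $\Delta=\delta^2$ with $\alpha_n=(b_n+\delta)/C_n$, $C_n=p^{k_n}c_n$ and $C_nC_{n-1}=\Delta-b_n^2$. So $\alpha_n$ is a root of $C_nx^2-2b_nx-C_{n-1}$, whose discriminant is $4\Delta$, and your primitive form has discriminant $4\Delta/g^2$ with $g=\gcd(C_n,2b_n,C_{n-1})$. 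This equals $\Delta$ only if $g=2$. For example, $(1+\sqrt{50})/7$ has primitive polynomial $7x^2-2x-7$ of discriminant $200$, while the entry $471$ in Table~\ref{tab:neg} is $K$ for $\Delta=50$; your count would give $K=3771$. Even constancy of $g$ is not automatic, since $x\mapsto a_n+1/x$ only lies in $\mathrm{GL}_2(\mathbb Z[\frac1p])$. It does hold for $n\ge2$, but only because $p\mid C_n$ for $n\ge1$ and $p\nmid b_n$ for $n\ge2$. These are Browkin-specific facts that your sketch (``the normalisation absorbs exactly the powers of $p$'') does not supply, and your fallback of merely bounding the discriminant would change $K$.

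The repair is to drop primitivity: fix $\delta$ and track the integer pairs $(b_n,C_n)$. By \eqref{eq:bncnrecurrence}, $F_n(x)=C_nx^2-2b_nx-C_{n-1}$ satisfies $F_{n+1}(y)=-y^2F_n(a_n+1/y)$ exactly, so the discriminant is $4\Delta$ with no normalisation. Negative norm means $b_n^2<\Delta$, so $|b_n|\le d$ and $C_n$ is a divisor of $\Delta-b_n^2$. On the interval of Part (1) all $C_n$ share one sign because $C_nC_{n-1}>0$, which gives at most $\sum_{|b|\le d}(\Delta-b^2)=K-1$ pairs; since $\delta$ is fixed, each pair determines $\alpha_n$. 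This also fixes your identification step: equal unoriented forms only give $\alpha_j\in\{\alpha_i,\alpha_i'\}$, and $\alpha_i,\alpha_j$ do not come from ``the same chain of transformations''. In Part (2), if $N(\xi_{n_0})>0$ then $[n_0,n_0+2K]$ contains only $K$ negative-norm indices. You therefore need the index $n_0+2K+1$ (covered by the hypothesis on $N(\xi_{n+1})$) or the count $K-1$. Moreover, the sign of $C_n$ flips between consecutive negative-norm indices there, so both signs must be counted; this still fits within $\Delta-b^2$ once you use $p\mid C_n$.
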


In this section, first of all, we improve the bounds provided in the previous theorem and then generalize the result to the case where, within a prescribed interval, a given number of complete quotients have negative norm. We then address the more difficult case in which the complete quotients eventually all have positive norm. This case is harder because positivity does not immediately provide the boundedness needed to restrict the complete quotients to a finite set. This also distinguishes Browkin continued fractions from Ruban ones, for which periodic expansions can not have eventually positive norms, whereas periodic Browkin expansions with all complete quotients of positive norm do exist. 
Indeed, it is easy to exhibit periodic Browkin continued fractions of arbitrary even period length whose complete quotients all have positive norm, as in the following example.
\begin{Example}
Given any $L \in \mathbb N$ even, consider $y_i \in \mathbb Z[\frac{1}{p}] \cap \left(-\frac{p}{2}, \frac{p}{2}\right)$, with $y_i > 2$, for every $0 \leq i \leq L-1$ and construct the purely periodic continued fraction $$\alpha_0 = [\overline{a_0, \ldots, a_{L-1}}],$$ where $a_i := (-1)^i y_i$. We can observe that the complete quotients are
\[ \alpha_n = [\overline{a_n, a_{n+1}, \ldots, a_{L-1}, a_0, a_1, \ldots, a_{n-1}}], \]
for all $1 \leq n \leq L-1$. A quadratic polynomial for which these quadratic irrationals are roots is
\[ B^{(n)}_{L-1} x^2 + (B^{(n)}_{L-2} - A^{(n)}_{L-1}) x - A^{(n)}_{L-2}, \]
where $(A^{(n)}_i)_{i \geq 0}$ and $(B^{(n)}_i)_{i \geq 0}$ are the sequences of numerators and denominators of convergents of $\alpha_n$ for $0 \leq n \leq L-1$. Thus, we have that $N(\xi_n) = -\frac{A_{L-2}^{(n)}}{B_{L-1}^{(n)}}$, where $\xi_n$ is the image of $\alpha_n$ in $\mathbb C$ and
\[ \text{sign}(A^{(n)}_{L-2}) = \text{sign}(a_0 \cdots a_{L-2}), \quad  \text{sign}(B^{(n)}_{L-1}) = \text{sign}(a_1 \cdots a_{L-1}). \]

For instance, let us prove this for \(A_{L-2}^{(0)}\).
We show by induction that, for every \(1 \leq i \leq L-2\),
\[
\operatorname{sign}\left(A_i^{(0)}\right)
= \operatorname{sign}(a_0\cdots a_i), \quad \left|A_i^{(0)}\right|>\left|A_{i-1}^{(0)}\right|.
\]
For $i = 0$, it is straightforward to check and for \(i=1\), since \(a_0a_1<0\) and \(|a_0a_1|>4\), we have that \(A_1^{(0)}=a_0a_1+1\) has the sign of \(a_0a_1\), and moreover
\[
\left|A_1^{(0)}\right|= |a_0a_1|-1 > |a_0| = \left|A_0^{(0)}\right|.
\]
Assume now that the claim holds up to \(i-1\).
Since \(a_{i-1}\) and \(a_i\) have opposite signs,
\(a_iA_{i-1}^{(0)}\) and \(A_{i-2}^{(0)}\) have opposite
signs. Moreover, using \(|a_i|>2\),
\[
|a_iA_{i-1}^{(0)}|>2|A_{i-1}^{(0)}|>|A_{i-1}^{(0)}|+|A_{i-2}^{(0)}|.
\]
Therefore, from $A_i^{(0)}=a_iA_{i-1}^{(0)}+A_{i-2}^{(0)},$
we obtain
\[
\operatorname{sign}\left(A_i^{(0)}\right)=
\operatorname{sign}\left(a_iA_{i-1}^{(0)}\right)=
\operatorname{sign}(a_0\cdots a_i)
\]
and $|A_i^{(0)}|>|A_{i-1}^{(0)}|.$ Thus, in particular,
$\operatorname{sign}\left(A_{L-2}^{(0)}\right)=
\operatorname{sign}(a_0\cdots a_{L-2}).$

Considering that $a_i < 0$ if and only if $i$ is odd, we have
\[ \text{sign}(A^{(n)}_{L-2}) = \text{sign}\left(\prod_{i=1}^{L-3} a_i\right), \quad \text{sign}(B^{(n)}_{L-1}) = \text{sign}\left(\prod_{i=1}^{L-1} a_i\right),\]
i.e., $A^{(n)}_{L-2}$ and $B^{(n)}_{L-1}$ have opposite sign and consequently $N(\xi_n) > 0$ for every $0 \leq n \leq L-1$. 
\end{Example}

Let us introduce some notation used in the following. Given $\alpha \in \mathbb{Q}_p$ quadratic irrational, we write \begin{equation*}
    \alpha = \frac{b_0+\delta}{p^{k_0}c_0},
\end{equation*}
where $b_0,c_0,k_0 \in \mathbb{Z}, p \nmid c_0, \delta \in \mathbb Q_p$ and let $\Delta=\delta^2$ be a non-square positive integer. Moreover, for any complete quotient $\alpha_n$ we have
\begin{equation*}
    \alpha_n = \frac{b_n+\delta}{p^{k_n}c_n},
\end{equation*}
where $b_n,c_n,k_n \in \mathbb{Z}, p \nmid c_n$ and \begin{equation}\label{eq:bncnrecurrence}
    \begin{cases}
    b_n+b_{n+1} = a_np^{k_n}c_n \\
    p^{k_n+k_{n-1}}c_nc_{n-1}= \Delta -b_{n}^2
\end{cases}
\end{equation}
We will also denote by $\xi_n$ the image of $\alpha_n$ in $\mathbb C$ and define $d := \lfloor \sqrt{\Delta} \rfloor, C_n := p^{k_n}c_n$, $D_n := |C_n|$, for every $n \geq 0$. Finally, if $\alpha$ has a periodic expansion, we will denote by $\lambda(\alpha)$ the length of the period.

\begin{Proposition} \label{prop:improv}
Let $\alpha\in\mathbb{Q}_p$ be a quadratic irrational. For every integer $b$ with $|b|\leq d$, write
\[
\Delta-b^2=p^{e_b}u_b, \quad p\nmid u_b.
\]
Define
\[\mathcal{B}:=\left\{b\in\mathbb{Z}: |b|\leq d,\; e_b\geq2,\;
v_p(b+\delta)=0 \right\}\]
and
\begin{equation} \label{eq:bound-tau}
K:= \sum_{b \in \mathcal B} (e_b-1)\tau(u_b),
\end{equation}
where $\tau(u_b)$ denotes the number of positive divisors of $u_b$.
If there exists $n_0 > 1$ such that $N(\xi_n)<0$ for every $n\in[n_0,n_0+K],$ then the Browkin continued fraction expansion of $\alpha$ is periodic
with period of length at most $K$.

\end{Proposition}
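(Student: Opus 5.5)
We follow the same pigeonhole strategy as in \cite[Proposition 4.8]{CMT}, but count the possible complete quotients more carefully. The aim is to show that, whenever $N(\xi_n)<0$, the pair $(b_n,C_n)$ determining $\alpha_n=\frac{b_n+\delta}{C_n}$ lies in a finite set $\mathcal S$ with $|\mathcal S|\le K$. Then $K+1$ consecutive complete quotients $\alpha_{n_0},\dots,\alpha_{n_0+K}$ cannot all be distinct. If $\alpha_i=\alpha_j$ with $n_0\le i<j\le n_0+K$, the algorithm forces periodicity with period length dividing $j-i\le K$.

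\textbf{Step 1: bounding $b_n$.} Since $N(\xi_n)=\frac{b_n^2-\Delta}{C_n^2}$, the condition $N(\xi_n)<0$ gives $b_n^2<\Delta$, so $|b_n|\le d$. By \eqref{eq:bncnrecurrence}, $\Delta-b_n^2=C_nC_{n-1}=p^{k_n+k_{n-1}}c_nc_{n-1}$. For $n\ge 2$ we have $k_n,k_{n-1}\ge1$ by Proposition~1(1), since $v_p(a_n)=v_p(\alpha_n)<0$ and $v_p(\alpha_n)=-k_n+v_p(b_n+\delta)$. So $e_{b_n}=k_n+k_{n-1}\ge 2$. We then check that $v_p(b_n+\delta)=0$. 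If $p$ divided $b_n+\delta$ (in the $p$-adic sense), then because $p\mid \Delta-b_n^2=(b_n-\delta)(b_n+\delta)$ and $p$ is odd, $p$ would have to divide both $b_n$ and $\delta$. This contradicts $v_p(\alpha_n)=-k_n$ together with $|a_n|_p=|\alpha_n|_p$ and the normalisation of the representation. Adjusting the normalisation (as in \cite{CMT}) if needed, we conclude $b_n\in\mathcal B$. The hypothesis $n_0>1$ is used exactly to guarantee $n\ge2$ here.

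\textbf{Step 2: counting $C_n$ for fixed $b_n=b$.} From $p^{k_n}c_n\cdot p^{k_{n-1}}c_{n-1}=p^{e_b}u_b$ with $k_n,k_{n-1}\ge1$, the exponent $k_n$ ranges over $1,\dots,e_b-1$, giving $e_b-1$ choices. The integer $c_n$ divides $u_b$. Its sign is not free: $N(\xi_n)<0$ together with the sign relation $C_nC_{n-1}=\Delta-b^2>0$ shows that $C_n$ and $C_{n-1}$ have the same sign. The sign of $C_n$ should then be pinned down by $b$ itself. I would try to show that the condition $|a_{n-1}|<p/2$, combined with $b_{n-1}+b_n=a_{n-1}C_{n-1}$ and $|b_{n-1}|,|b_n|\le d$, determines the sign. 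Alternatively, the real conjugates $\xi_n,\xi_n'$ having opposite signs, and their ordering, may fix it. This gives at most $\tau(u_b)$ choices for $c_n$, hence at most $(e_b-1)\tau(u_b)$ pairs for each $b$, and $|\mathcal S|\le K$ after summing over $b\in\mathcal B$.

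\textbf{Main obstacle.} The delicate point is removing the factor $2$ coming from the sign of $c_n$. Without it, the count only gives $2K$. I expect the sign determination to come from the fact that, for a pair $(b,C)$ with negative norm, the two real images have opposite signs and $\alpha_n$ is determined by the $p$-adic side. I would first try to prove that $\operatorname{sign}(C_n)$ is forced by the previous step through $b_n+b_{n+1}=a_nC_n$. If that fails, I would instead show that $(b,C)$ and $(b,-C)$ cannot both occur among complete quotients with negative norm, since they would give $\alpha$ and $-\alpha$-type conjugate orbits.
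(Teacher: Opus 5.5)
Your overall plan (bound $b_n$, count the admissible $C_n$ for each $b$, then pigeonhole over the $K+1$ indices of the window) matches the paper's. However, the proof as written does not reach the statement. In Step~2 you count $(e_b-1)\tau(u_b)$ values of $|C_n|$ and then leave the sign of $C_n$ open. That gives at most $2K$ possible pairs $(b_n,C_n)$. The hypothesis only guarantees the $K+1$ indices $n_0,\dots,n_0+K$, so this count does not even force a repetition.

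The repairs you suggest are not carried out. These are: making $\operatorname{sign}(C_n)$ a function of $b$ alone (via $|a_{n-1}|<p/2$ or the ordering of the real conjugates), or excluding that $(b,C)$ and $(b,-C)$ both occur. Both aim at a global statement that is stronger than needed and not evidently true. Since $s(-x)=-s(x)$ for centered digits, $-\alpha_n=(b_n+\delta)/(-C_n)$ has expansion $[-a_n,-a_{n+1},\dots]$. So nothing a priori forbids both signs occurring at different places of an expansion.

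The missing idea is one you already wrote down but did not use. The relation $C_nC_{n-1}=\Delta-b_n^2>0$ holds for \emph{every} $n\in[n_0,n_0+K]$. Chaining these relations shows that $C_{n_0},\dots,C_{n_0+K}$ all share one common sign $s$. Hence every complete quotient in the window lies in
\[
\Bigl\{\tfrac{b+\delta}{s\,p^{j}v}\;:\;b\in\mathcal B,\ 1\le j\le e_b-1,\ v>0,\ v\mid u_b\Bigr\},
\]
a set with at most $K$ elements. The sign is fixed by the window, not by $b$, and that is all the pigeonhole step needs; this is exactly how the paper closes the count.

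A smaller point concerns Step~1. The detour through $p\mid b_n$ and $p\mid\delta$ is not justified as written, because the factor $b_n+\delta$ alone already accounts for $p\mid(b_n-\delta)(b_n+\delta)$. It is also unnecessary. The exponent in $C_n=p^{k_n}c_n$, $p\nmid c_n$, is $k_n=-v_p(a_n)=-v_p(\alpha_n)$, so $v_p(b_n+\delta)=v_p(\alpha_n)+k_n=0$ in one line.
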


\begin{proof}
Let $n\in[n_0,n_0+K]$. By definition, $N(\xi_n)=\frac{b_n^2-\Delta}{C_n^2},$ and $N(\xi_n)<0$ if and only if $b_n^2-\Delta<0.$
Thus, according to \eqref{eq:bncnrecurrence}, the $C_n$'s all have the same sign for $n\in[n_0,n_0+K]$ and $p^2\mid\Delta-b_n^2.$

Fix now a possible value of $b_n$, and write
\[
\Delta-b_n^2=p^{e_b}u_b, \quad e_b \geq 2, \quad p\nmid u_b.
\]
Since both $C_n$ and $C_{n-1}$ are divisible by $p$,  $|C_n|$ must be a divisor of $\Delta-b^2$ such
that
\[
p\mid |C_n| \quad\text{and}\quad p\mid\frac{\Delta-b^2}{|C_n|}.
\]
Since $e_b\geq2$, every such divisor can be uniquely written as
\[
|C_n|=p^jv, \quad 1\leq j\leq e_b-1, \quad
v\mid u_b.
\]
Hence, there are exactly
\[
(e_b-1)\tau(u_b)
\]
possible values for $|C_n|$. Since all the $C_n$'s have the same
sign on the interval under consideration, this is also an upper bound
for the number of possible values of $C_n$ for a fixed $b_n$.
We can further restrict the possible values of $b_n$. Indeed, for
$n\geq1$ we have
\[
-k_n
=
v_p(\alpha_n)
=
v_p(b_n+\delta)-k_n
\]
and therefore
\[
v_p(b_n+\delta)=0.
\]
Finally, since $N(\xi_n)<0$, we have $b_n^2<\Delta$. 
Therefore, since $p^2 \mid \Delta- b_n^2$, $v_p(b_n+\delta)=0$, $|b_n| \leq d$, we have  $b_n \in \mathcal B$.
It follows that the number of possible complete quotients $\alpha_n$
with negative norm is at most
\[
\sum_{b \in \mathcal B} (e_b-1)\tau(u_b).
\]
Hence, by the pigeonhole principle, there exist $\bar m,\bar n\in[n_0,n_0+K],$ $\bar m<\bar n,$ such that $\alpha_{\bar m}=\alpha_{\bar n}.$
Therefore, the Browkin continued fraction expansion of $\alpha$ is
periodic, and its period length satisfies $\lambda(\alpha)\leq K.$
\end{proof}

\begin{Remark}
The bound given in the previous proposition is the sharpest one
provided by the counting argument used in its proof, since it keeps
track of the exact number of divisors of $\Delta-b^2$ satisfying the
necessary divisibility conditions. However, it is not given in closed
form, since it involves the divisor function $\tau$ and the use of the set $\mathcal B$.
Although $\tau(m)$ can be computed exactly from the prime
factorization of $m$, no closed formula in terms of $m$ is
available. Thus, it is useful to derive explicit bounds
for $K$ by using upper bounds for the divisor function.
Considering the elementary estimate $\tau(m)\leq 2\sqrt{m}$ and also that $\mathcal B \subseteq \{ -d, \ldots, d\}$, we have
\begin{equation} \label{eq:bound-el}
K \leq\left\lfloor2
\sum_{b=-d}^d \sqrt{\Delta - b^2} \right\rfloor.
\end{equation}
A substantially sharper estimate can be obtained by using a result of
Nicolas and Robin~\cite{Nicolas}, who proved that
\[
\tau(m)\leq m^{\frac{C}{\log\log m}}, \quad C =1.0660186782\ldots ,
\]
for every $m\geq3$. 
\end{Remark}

With the same argument used in the previous proposition, one can also improve the bound in point 2 of Theorem \ref{thm:CMT}. In the next proposition, we generalize this result.
For the sake of simplicity, in the following we will always use the elementary estimates employed above, namely the bound
\[
\tau(m)\leq 2\sqrt{m}
\]
together with the inclusion
\[
\mathcal B\subseteq\{-d,\ldots,d\}.
\]
This leads to simpler explicit bounds and allows us to keep the statements and proofs more transparent. Naturally, sharper estimates can be obtained in each case by retaining the exact counting argument of Proposition~11, or by using stronger upper bounds for the divisor function.

\begin{Proposition}\label{prop:neg}
Let $\alpha\in\mathbb{Q}_p$ be a quadratic irrational. Let $I$ be a set of indices such that $N(\xi_i)<0$ for all $i \in I$. If $|I| $ is greater than  
\begin{equation*}
        K := \Big\lfloor 4\sum_{i=-d}^{d}\sqrt{\Delta-i^2}\Big\rfloor
\end{equation*}
then the Browkin continued fraction of $\alpha$ is periodic. Assume $I = \{i_1, \dots, i_K,\dots\}$, with $i_{j}<i_{j+1}$. Then, the period of the Browkin continued fraction of $\alpha$ has length at most 
\begin{equation*}
        \min_{j=1,\dots, |I|-K} i_{j+K}- i_j. 
\end{equation*}
\end{Proposition}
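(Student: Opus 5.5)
The plan is to rerun the counting argument of Proposition \ref{prop:improv}, dropping the assumption that the indices in $I$ are consecutive. The one new feature is that the signs of the $C_i$ are no longer forced to agree, and the extra factor $2$ in $K$ (compared with \eqref{eq:bound-el}) pays for the two possible signs. I assume throughout that the indices in $I$ are $\geq 1$, as in Proposition \ref{prop:improv}, so that $k_i\geq 1$ and $v_p(b_i+\delta)=0$.

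\emph{Counting negative-norm quotients.} Each complete quotient is determined by the pair $(b_i, C_i)$, since $\alpha_i=(b_i+\delta)/C_i$ with $C_i=p^{k_i}c_i$. For $i\in I$ we have $N(\xi_i)=(b_i^2-\Delta)/C_i^2<0$, hence $b_i^2<\Delta$ and $|b_i|\leq d$, so $b_i$ ranges over at most $2d+1$ values. By \eqref{eq:bncnrecurrence}, $C_iC_{i-1}=\Delta-b_i^2$, so $C_i$ divides $\Delta-b_i^2$. Its absolute value is therefore one of at most $(e_b-1)\tau(u_b)\leq \tau(\Delta-b_i^2)\leq 2\sqrt{\Delta-b_i^2}$ divisors; the $p$-divisibility restriction is used exactly as in Proposition \ref{prop:improv}, or can simply be discarded. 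Unlike in Proposition \ref{prop:improv}, nothing now forces the $C_i$ with $i\in I$ to share a sign, so $C_i$ takes at most $2\cdot 2\sqrt{\Delta-b_i^2}$ values. Summing over $b$, the set of complete quotients with negative norm has at most
\[
4\sum_{b=-d}^{d}\sqrt{\Delta-b^2}
\]
elements. Since this set has integer cardinality, the cardinality is at most $K$.

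\emph{Pigeonhole.} Fix $j$ with $1\leq j\leq |I|-K$ and consider the $K+1$ indices $i_j<i_{j+1}<\dots<i_{j+K}$ of $I$. By the count above, two of them, say $m<n$, satisfy $\alpha_m=\alpha_n$. Since the algorithm is deterministic, $\alpha_{m+s}=\alpha_{n+s}$ for all $s\geq 0$, so the expansion is periodic. Whenever $|I|>K$ such a $j$ exists, which proves the first claim. The minimal period length divides $n-m$, so it is at most $n-m\leq i_{j+K}-i_j$. Taking the minimum over $j$ gives the stated bound.

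The only delicate point is the sign bookkeeping, that is, justifying the factor $4$ instead of $2$ and making sure the elementary divisor bound applies to every $b$ with $|b|\leq d$. In particular, the case $b^2=\Delta$ cannot occur because $\Delta$ is not a square. Everything else is a direct transcription of the proof of Proposition \ref{prop:improv}.
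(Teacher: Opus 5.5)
Your proposal is correct and follows the paper's proof: you count the pairs $(b_n,C_n)$ with $|b_n|\le d$ and $C_n$ a signed divisor of $\Delta-b_n^2$, which gives at most $2\cdot 2\sqrt{\Delta-b_n^2}$ values of $C_n$ for each $b_n$, and then apply the pigeonhole principle to $K+1$ consecutive elements of $I$. You are more explicit than the paper about the sign factor and the index restriction. Strictly, $C_i\mid\Delta-b_i^2$ is automatic only when $C_{i-1}$ is an integer, i.e.\ for $i\geq 2$ as in Proposition \ref{prop:improv}, unless the initial representation of $\alpha$ is suitably normalized.
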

\begin{proof}
Let $n \in I$. By definition, it holds that $N(\xi_n)= \frac{b_n^2-\Delta}{p^{2k_n}c_n^2}$, hence $N(\xi_n)<0 \Leftrightarrow b_n^2 -\Delta <0$. According to \eqref{eq:bncnrecurrence}, for a fixed $b_n$,  $p^{k_n}c_n$ can take at most $4(\sqrt{\Delta-b_n^2})$ values. Since $b_n^2 < \Delta$ we have $|b_n| \le d$ for all $n \in I$, which means that $b_n$ can assume at most $2d+1$ values between $-d$ and $d$. Hence, the number of possible candidates for $\alpha_n$, for $n \in I$, is $K= \Big\lfloor 4\sum_{i=-d}^{d}\sqrt{\Delta-i^2}\Big\rfloor$. 
Let $\hat j$ be a positive integer such that $i_{\hat j + K} - i_{\hat j} = \min_{j=1,\dots, |I|-K} i_{j+K}- i_j$.
By observing that the number of $\alpha_n$ for which $N(\xi_n)<0$, for $n \in \{ i_{\hat j}, \ldots, i_{\hat j + K} \}$, is at least $K+1$, it follows that there exists $\Bar{m}, \Bar{n} \in \{ i_{\hat j}, \ldots, i_{\hat j + K} \}$ for which $\alpha_{\Bar{m}} = \alpha_{\Bar{n}}$, thus concluding the proof.
\end{proof}

Now, we deal with the case of positive norms. 

\begin{Proposition}\label{prop:norm_positive_1}
    Assume $\alpha \in \mathbb{Q}_p$ has a periodic Browkin continued fraction expansion and suppose that \begin{itemize}
        \item $N(\xi_n)>0$ for all $n$ in the period;
        \item there exists $\eta >0$ such that $|a_n|\ge 2 + \eta$, for all $n$ in the period.
    \end{itemize}
Then,
    \begin{equation}\label{eq:bound-eta}
D_n \le M, \quad M:= \left \lfloor \sqrt{\frac{\Delta}{\eta+\frac{\eta^2}{4}}}\right\rfloor
    \end{equation}
for all $n$ in the period and 
    \begin{equation*}
         \lambda(\alpha) \le 2M(2\left\lfloor \sqrt{M^2+\Delta}\right \rfloor+1).
    \end{equation*}
\end{Proposition}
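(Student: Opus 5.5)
The plan is to bound $D_n$ by locating the two real images $\xi_n,\xi_n'$ of each complete quotient in the period, and then to bound $\lambda(\alpha)$ by counting the possible pairs $(b_n,C_n)$. Write $\xi_n=\frac{b_n+\sqrt{\Delta}}{C_n}$ and $\xi_n'=\frac{b_n-\sqrt{\Delta}}{C_n}$. Then
\[
|\xi_n-\xi_n'|=\frac{2\sqrt{\Delta}}{D_n},
\]
so $D_n\le M$ will follow once we show $|\xi_n-\xi_n'|\ge 2\sqrt{\eta+\eta^2/4}$. Let $r>1$ be the larger root of $r+\frac1r=2+\eta$, so that $r-\frac1r=\sqrt{(2+\eta)^2-4}=2\sqrt{\eta+\eta^2/4}$. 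The target is then the following claim: one conjugate satisfies $|\cdot|\ge r$, and the other satisfies $|\cdot|\le 1/r$.

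To prove the claim, I use that both real images satisfy the algebraic identity $x_n=a_n+\frac{1}{x_{n+1}}$, since it holds over $\mathbb Q(\sqrt\Delta)$. Consider the Möbius maps $f_n(x)=a_n+\frac1x$ on the real projective line.

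First, $f_n$ sends $U=\{|x|\ge r\}\cup\{\infty\}$ into itself. Indeed, if $|x|\ge r$ then
\[
|f_n(x)|\ge |a_n|-\tfrac1r\ge 2+\eta-\tfrac1r=r.
\]
Second, $f_n^{-1}(x)=\frac{1}{x-a_n}$ sends $V=[-1/r,1/r]$ into itself, because $|x-a_n|\ge 2+\eta-\frac1r=r$.

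Let $L=\lambda(\alpha)$ and fix $n$ in the period. The composite $F=f_n\circ\cdots\circ f_{n+L-1}$ has exactly the two fixed points $\xi_n,\xi_n'$; this is where purely periodic behaviour from index $n$ on is used. Now $F$ maps the closed arc $U$ (an interval through $\infty$ on $\mathbb P^1(\mathbb R)$) into itself, so it has a fixed point in $U$. Likewise $F^{-1}$ maps $V$ into itself, so it has a fixed point in $V$. Since $U$ and $V$ are disjoint, one of $\xi_n,\xi_n'$ lies in $U$ and the other lies in $V$.

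The hypothesis $N(\xi_n)>0$ means $\xi_n\xi_n'>0$, so the two images have the same sign. Hence
\[
|\xi_n-\xi_n'|\ge r-\tfrac1r,
\]
which gives $D_n\le \sqrt{\Delta}/\sqrt{\eta+\eta^2/4}$, and $D_n\le M$ since $D_n$ is an integer.

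For the period length, I use \eqref{eq:bncnrecurrence}:
\[
|b_n^2-\Delta|=D_nD_{n-1}\le M^2 .
\]
Since $b_n^2>\Delta$ by positivity of the norm, this gives $|b_n|\le\lfloor\sqrt{M^2+\Delta}\rfloor$, so $b_n$ takes at most $2\lfloor\sqrt{M^2+\Delta}\rfloor+1$ values. Meanwhile $C_n$ is a nonzero integer with $|C_n|\le M$, so it takes at most $2M$ values. A complete quotient is determined by the pair $(b_n,C_n)$. The complete quotients within one period are pairwise distinct (otherwise the period would be shorter), so by pigeonhole $\lambda(\alpha)\le 2M(2\lfloor\sqrt{M^2+\Delta}\rfloor+1)$.

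The main obstacle I expect is the fixed-point localization step. We must make sure the attracting and repelling fixed points of $F$ really are the two conjugates. This requires that $F$ is not the identity, which holds because $\alpha$ is irrational. We must also handle the topology of the arc $U$ through $\infty$ carefully, which is why I treat everything on the projective line. Alternatively, one can argue by strict contraction in the cross-ratio metric, since $|a_n|>2$.
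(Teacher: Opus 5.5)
Your argument is correct, and it takes a genuinely different route from the paper's.

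The paper stays inside the recurrence \eqref{eq:bncnrecurrence}. Positivity of the norms gives $b_n^2=\Delta+D_nD_{n-1}$. At an index $m$ where $D_m$ is maximal, it bounds both $|b_m|/D_m$ and $|\bar a_m-b_m/D_m|=|b_{m+1}|/D_m$ by $\sqrt{1+\Delta/D_m^2}$, so that $2+\eta\le|a_m|\le 2\sqrt{1+\Delta/D_m^2}$. This gives $D_m\le M$, and maximality transfers the bound to every $n$.

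You instead pass to the real conjugates and use the invariant arcs $\{|x|\ge r\}\cup\{\infty\}$ and $[-1/r,1/r]$ of the maps $x\mapsto a_n+1/x$ and their inverses. This shows that every complete quotient in the period has one conjugate of absolute value at least $r$ and the other at most $1/r$, an analogue of classical reducedness. You then read off $D_n=2\sqrt{\Delta}/|\xi_n-\xi_n'|$.

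What your route buys:
\begin{itemize}
\item It bounds each $D_n$ directly, without selecting a maximum.
\item It never actually uses the sign of the norm. The inequality $|\xi_n-\xi_n'|\ge|\xi_n|-|\xi_n'|\ge r-1/r$ is just the reverse triangle inequality, so your same-sign step is unnecessary. Likewise $|b_n^2-\Delta|=D_nD_{n-1}\le M^2$ bounds $|b_n|$ whatever the sign. So your proof gives the proposition without the hypothesis $N(\xi_n)>0$.
\end{itemize}

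What the paper's route buys is locality. It only needs $D_m\ge D_{m-1},D_{m+1}$ and the relations at $m$ and $m+1$. That is exactly what yields Corollary \ref{cor:np-eta} on finite windows of a possibly non-periodic expansion, and later Lemma \ref{lemma:Dn} and Proposition \ref{prop:Npos-a1}. Your fixed-point step, by contrast, needs exact periodicity.

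One small correction: $F\ne\mathrm{id}$ does not follow from the irrationality of $\alpha$. It follows from your own setup: $F(\infty)$ lies in $\{|x|\ge r\}$, so it is finite. Irrationality is what you need for the other half, namely that the nonzero rational fixed-point quadratic of $F$ has exactly $\xi_n,\xi_n'$ as its roots.
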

\begin{proof}
    Since $\alpha$ is periodic, there is a finite set of possible values for $D_n$ in the period. Let $m$ be an index such that $D_m = \max_nD_n$ for $n$ in the period. We now prove that 
    \begin{equation*}
        |a_m| \le 2 \sqrt{1+\frac{\Delta}{D_m^2}}.
    \end{equation*}
    Indeed, since the norms are all positive, $C_n,C_{n-1}$ have opposite signs, therefore we can write \begin{equation}\label{eq:bn}
        b_n^2 = \Delta - C_nC_{n-1} = \Delta + D_nD_{n-1}.
    \end{equation}
    Moreover, according to \eqref{eq:bncnrecurrence} we have that $a_n = \frac{b_{n+1}+b_n}{C_n}$, and we define \begin{equation*}
        \Bar{a}_n := \frac{b_{n+1}+b_n}{D_n} = \pm a_n.
    \end{equation*}
    It follows that $b_{n+1}=\Bar{a}_nD_n-b_n$. Hence, we have

 \begin{align*}
        D_nD_{n+1} &= b_{n+1}^2-\Delta = (\Bar{a}_nD_n-b_n)^2-\Delta \\ &=  \Bar{a}_n^2D_n^2 - 2\Bar{a}_nb_nD_n + b_n^2-\Delta \\
        &= \Bar{a}_n^2D_n^2 - 2\Bar{a}_nb_nD_n+D_{n}D_{n-1}.
    \end{align*}
from which
\begin{equation}\label{eq:Dn1}
        D_{n+1} = \Bar{a}_n^2D_n-2\Bar{a}_nb_n+D_{n-1}.\end{equation}
Considering the index $m$ in \eqref{eq:bn}, the following relation is obtained 
    \begin{equation}\label{eq:bmDm}
    \left(\frac{b_m}{D_m}\right)^2 = \frac{D_{m-1}}{D_{m}} + \frac{\Delta}{D_m^2}\Rightarrow \left|\frac{b_m}{D_m}\right|  \le \sqrt{1+ \frac{\Delta}{D_m^2}}.
    \end{equation}
    Moreover, according to \eqref{eq:Dn1}, we get
    \begin{align}
        \frac{D_{m+1}}{D_{m}} = \frac{D_{m-1}}{D_{m}} -2\Bar{a}_m\frac{b_m}{D_m}+\Bar{a}_m^2.
    \end{align}
    By adding and subtracting the quantity $\frac{b_m^2}{D_m^2}$, it is possible to derive that \begin{equation*}
        \left(\Bar{a}_m- \frac{b_m}{D_m}\right)^2 = \frac{D_{m+1}}{D_{m}}+\left(\frac{b_m}{D_m}\right)^2- \frac{D_{m-1}}{D_{m}} =  \frac{D_{m+1}}{D_{m}}+\frac{\Delta}{D_m^2},
    \end{equation*} 
    which implies \begin{equation}\label{eq:ambmDm}
        \left|\Bar{a}_m-\frac{b_m}{D_m}\right| \le \sqrt{1+\frac{\Delta}{D_m^2}}
    \end{equation}
    since $D_m \geq D_{m+1}$.
    Now, from \eqref{eq:bmDm} and \eqref{eq:ambmDm}, it follows that \begin{equation*}
        2+ \eta \leq |a_m| =  |\Bar{a}_m|   \le 2 \sqrt{1+\frac{\Delta}{D_m^2}} \, ,
    \end{equation*}
    from which it is easy to verify 
    \begin{equation*}
        \eta+\frac{\eta^2}{4} \le \frac{\Delta}{D_m^2} \Rightarrow D_m \le \left\lfloor \sqrt{\frac{\Delta}{ \eta+\frac{\eta^2}{4}}}\right\rfloor =M.
    \end{equation*}
    By definition of $m$, this inequality also holds for all $n$ in the period. Finally, it is sufficient to observe that $b_n^2=D_{n-1}D_n+\Delta \le M^2 +\Delta$, which implies that $|b_n| \le \sqrt{M^2+\Delta}$, meaning that the number of possible pairs $(b_n,C_n)$ is bounded by \begin{equation*}
        2M\cdot(2\lfloor \sqrt{M^2+\Delta} \rfloor+1).
    \end{equation*}
\end{proof}

\begin{Corollary} \label{cor:np-eta}
    Let $\alpha \in \mathbb{Q}_p$ quadratic irrational and let $n_0, K \in \mathbb{N}$, $\eta >0$ such that \begin{itemize}
        \item $N(\xi_n)>0$ for all $n \in [n_0, n_0+K]$;
        \item $\displaystyle \max_{n_0 \le n \le n_0+K} D_n = \max_{n_0 < n < n_0+K} D_n$;
        \item $|a_n| \ge 2+ \eta$ for all $n \in [n_0, n_0+K]$;
    \end{itemize}
    then, $D_n \le M := \left \lfloor \sqrt{\frac{\Delta}{\eta+\frac{\eta^2}{4}}}\right\rfloor$ for all $ n \in [n_0, n_0+K]$. Moreover, if $$K > 2M\cdot(2\left\lfloor \sqrt{M^2+\Delta} \right \rfloor+1)$$ then $\alpha$ is periodic with $$\lambda(\alpha) \le 2M\cdot(2\lfloor  \sqrt{M^2+\Delta} \rfloor+1).$$
\end{Corollary}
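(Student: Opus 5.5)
The plan is to rerun the argument of Proposition \ref{prop:norm_positive_1} on the window $[n_0,n_0+K]$, using the second hypothesis in place of periodicity. Periodicity was used there only to guarantee that the maximum of $D_n$ is attained at an index $m$ for which both $D_{m-1}$ and $D_{m+1}$ are available and bounded by $D_m$.

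First, choose $m$ with $D_m=\max_{n_0\le n\le n_0+K}D_n$. By the second hypothesis, $m$ can be taken with $n_0<m<n_0+K$, so $m-1$ and $m+1$ also lie in the window. There $N(\xi_n)>0$, hence $C_n$ and $C_{n-1}$ have opposite signs. This gives
\[
b_n^2=\Delta+D_nD_{n-1}
\]
and the recurrence \eqref{eq:Dn1} for all relevant indices, in particular at $m$. (At $n=n_0$ one should check whether the relation involving $C_{n_0-1}$ is needed; it is not needed at $m$, since $m-1\ge n_0$.)

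Next, exactly as in the proof of Proposition \ref{prop:norm_positive_1}:
\begin{itemize}
\item Since $D_{m-1}\le D_m$, \eqref{eq:bmDm} gives $|b_m/D_m|\le\sqrt{1+\Delta/D_m^2}$.
\item Since $D_{m+1}\le D_m$, \eqref{eq:ambmDm} gives $|\bar a_m-b_m/D_m|\le\sqrt{1+\Delta/D_m^2}$.
\end{itemize}
The triangle inequality then yields
\[
2+\eta\le|a_m|\le 2\sqrt{1+\Delta/D_m^2},
\]
so $\eta+\eta^2/4\le \Delta/D_m^2$, i.e.\ $D_m\le M$. Because $D_m$ is the maximum over the window, $D_n\le M$ for every $n\in[n_0,n_0+K]$.

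For periodicity, on the window $b_n^2=\Delta+D_nD_{n-1}\le \Delta+M^2$ whenever $n-1\ge n_0$, so $|b_n|\le\lfloor\sqrt{M^2+\Delta}\rfloor$. Also $C_n=\pm D_n$ with $1\le D_n\le M$, giving at most $2M$ choices. The pair $(b_n,C_n)$ determines $\alpha_n$, so there are at most $2M(2\lfloor\sqrt{M^2+\Delta}\rfloor+1)$ possible complete quotients for $n\in(n_0,n_0+K]$.

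The main obstacle is this endpoint bookkeeping. The bound on $b_n$ is not available at $n=n_0$, so I restrict to the $K$ indices $n_0+1,\dots,n_0+K$. If the strict inequality $K>2M(2\lfloor\sqrt{M^2+\Delta}\rfloor+1)$ leaves at least one more index than the number of possible values, the pigeonhole principle gives $\bar m<\bar n$ in the window with $\alpha_{\bar m}=\alpha_{\bar n}$. Then the Browkin expansion is periodic from $\bar m$ on, with period length at most $\bar n-\bar m$. Choosing the first repetition, as in the proofs of Propositions \ref{prop:improv} and \ref{prop:neg}, makes this at most the number of possible values, namely $2M(2\lfloor\sqrt{M^2+\Delta}\rfloor+1)$.
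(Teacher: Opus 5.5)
Your proposal is correct and follows the paper's proof. The paper likewise reruns Proposition~\ref{prop:norm_positive_1} at an interior index $m$ with $D_m\ge D_{m-1},D_{m+1}$, since the bound $|a_m|\le 2\sqrt{1+\Delta/D_m^2}$ is purely local. Your extra care in counting only the $K$ indices $n_0+1,\dots,n_0+K$ (where $b_n^2=\Delta+D_nD_{n-1}\le\Delta+M^2$ is available) and in locating the repetition among the first $2M(2\lfloor\sqrt{M^2+\Delta}\rfloor+1)+1$ of them is bookkeeping the paper leaves implicit here, and it matches what the paper spells out in the analogous corollary after Proposition~\ref{prop:Npos-a1}.
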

\begin{proof}
    The proof follows similarly to the proof of Proposition \ref{prop:norm_positive_1}, observing that $|a_j| \le 2 \sqrt{1+\frac{\Delta}{D_j^2}}$ for all $j$ such that $D_j \ge D_{j-1}$ and $D_j \ge D_{j+1}$.
\end{proof}

In what follows, we relax the condition on the partial quotients, requiring only that $|a_n| > 2$. The resulting statements clearly include the case discussed above. We nevertheless state the two cases separately, since the bounds obtained in Proposition \ref{prop:norm_positive_1} and Corollary \ref{cor:np-eta} are sharper than those derived below.

\begin{Lemma}\label{lemma:Dn}
Fix $n \in \mathbb{N}$, and let $N(\xi_n),N(\xi_{n+1})>0$, $D_n \ge D_{n-1}$, $D_n \ge D_{n+1}$, $|a_n|>2$. Then, $D_n < \Delta$.
\end{Lemma}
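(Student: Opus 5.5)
The plan is to reuse the two quadratic estimates from the proof of Proposition~\ref{prop:norm_positive_1}, now applied only locally at the index $n$, and then to replace the missing uniform $\eta$ by an arithmetic lower bound on $|a_n|-2$ coming from the shape of Browkin partial quotients. The final comparison of that lower bound with $D_n$ itself produces $D_n<\Delta$.

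\textbf{Step 1 (the local upper bound on $|a_n|$).} Since $N(\xi_n)>0$, the quantities $C_n$ and $C_{n-1}$ have opposite signs, so $b_n^2=\Delta+D_nD_{n-1}$ as in \eqref{eq:bn}. Similarly $N(\xi_{n+1})>0$ gives $b_{n+1}^2=\Delta+D_{n+1}D_n$. Exactly as in the derivation of \eqref{eq:bmDm} and \eqref{eq:ambmDm}, with $\bar a_n=(b_n+b_{n+1})/D_n=\pm a_n$, we get
\[
\Bigl(\tfrac{b_n}{D_n}\Bigr)^2=\tfrac{D_{n-1}}{D_n}+\tfrac{\Delta}{D_n^2},\qquad
\Bigl(\bar a_n-\tfrac{b_n}{D_n}\Bigr)^2=\tfrac{D_{n+1}}{D_n}+\tfrac{\Delta}{D_n^2}.
\]
The hypotheses $D_n\ge D_{n-1}$ and $D_n\ge D_{n+1}$ bound both right-hand sides by $1+\Delta/D_n^2$. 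The triangle inequality then gives
\[
|a_n|\le 2\sqrt{1+\Delta/D_n^2}.
\]
This is the observation already noted in the proof of Corollary~\ref{cor:np-eta}.

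\textbf{Step 2 (the arithmetic gap; this is the key step).} The bound $|a_n|>2$ alone is useless, because it only yields $\Delta/D_n^2>0$. The idea is to extract an effective $\eta$ from the $p$-adic structure of $a_n$. Write $a_n=\tilde a_n/p^{k_n}$ with $k_n\ge1$ and $p\nmid\tilde a_n$. The integer $2p^{k_n}$ is divisible by $p$, so $|\tilde a_n|\neq 2p^{k_n}$. Since $|\tilde a_n|>2p^{k_n}$ and both are integers, $|\tilde a_n|\ge 2p^{k_n}+1$, that is,
\[
|a_n|\ge 2+\eta,\qquad \eta:=p^{-k_n}.
\]
Squaring and combining with Step 1 gives, as in Proposition~\ref{prop:norm_positive_1},
\[
\eta+\tfrac{\eta^2}{4}\le \tfrac{\Delta}{D_n^2}.
\]
Hence $D_n^2\le \Delta\, p^{k_n}\big/\bigl(1+\tfrac{1}{4p^{k_n}}\bigr)<\Delta\, p^{k_n}$.

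\textbf{Step 3 (conclusion).} The same exponent $k_n$ appears in $C_n=p^{k_n}c_n$, because $v_p(\alpha_n)=v_p(a_n)=-k_n$ and $v_p(b_n+\delta)=0$. With $c_n\neq0$ this gives $D_n=p^{k_n}|c_n|\ge p^{k_n}$. Therefore
\[
D_n^2<\Delta\, p^{k_n}\le \Delta\, D_n,
\]
and dividing by $D_n>0$ yields $D_n<\Delta$.

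The only delicate point is Step 2. It depends on recognising that the excess $|a_n|-2$ is at least $p^{-k_n}$, and that this same $p^{k_n}$ is exactly the factor dividing $D_n$. Once that is in place, the remaining steps are routine manipulations already carried out in Proposition~\ref{prop:norm_positive_1}.
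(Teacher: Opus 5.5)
Your proof is correct and follows the paper's argument: the local bound $|a_n|\le 2\sqrt{1+\Delta/D_n^2}$, the arithmetic gap $|a_n|-2\ge p^{-k_n}$, and $D_n\ge p^{k_n}$. The only cosmetic difference is that the paper bounds $|a_n|-2\le 2\bigl(\sqrt{1+\Delta/D_n^2}-1\bigr)<\Delta/D_n^2$ directly, whereas you square and drop the $\eta^2/4$ term, which amounts to the same estimate.
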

\begin{proof}
    From the previous proofs we know that $|a_n| \le 2 \sqrt{1+\frac{\Delta}{D_n^2}}$. Hence, \begin{equation*}
        0< |a_n| -2 \le 2\left(\sqrt{1+\frac{\Delta}{D_n^2}} -1\right) < \frac{\Delta}{D_n^2}.
    \end{equation*} 
Moreover, from 
\begin{align*}
    |a_n|-2 = \frac{|\Tilde{a}_n|-2p^{k_n}}{p^{k_n}} \ge \frac{1}{p^{k_n}}, \quad 
    D_n \ge p^{k_n} 
\end{align*}
we get
\begin{equation*}
        \frac{1}{D_n} \le \frac{1}{p^{k_n}} \le |a_n|-2 < \frac{\Delta}{D_n^2}
\end{equation*}
which concludes the proof.
\end{proof}

\begin{Proposition}\label{prop:Npos2}
Assume $\alpha \in \mathbb{Q}_p$ has a periodic Browkin continued fraction expansion and suppose  
\begin{itemize}
        \item $N(\xi_n)>0$ for all $n$ in the period;
        \item $|a_n| > 2$, for all $n$ in the period;
\end{itemize}
    then, for all $n$ in the period, 
    \begin{equation} \label{eq:bound-a2}
    D_n < \Delta
    \end{equation}
    and 
    $$\lambda(\alpha)\le 2 (\Delta-1)(2\lfloor \sqrt{ ( \Delta-1)^2+\Delta }\rfloor+1).$$
\end{Proposition}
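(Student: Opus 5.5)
Because the expansion is periodic, only finitely many values $D_n$ occur in the period, so the maximum is attained. The plan is to mimic Proposition \ref{prop:norm_positive_1}, with Lemma \ref{lemma:Dn} doing the job that the $\eta$-estimate did there. Let $m$ be an index in the period with $D_m=\max_n D_n$ over the period. Since the period repeats cyclically, we may take $m$ to be an interior index, so that $m-1$ and $m+1$ also correspond to indices of the period. Then $D_m\ge D_{m-1}$, $D_m\ge D_{m+1}$, $N(\xi_m),N(\xi_{m+1})>0$, and $|a_m|>2$ by hypothesis. Lemma \ref{lemma:Dn} applies and gives $D_m<\Delta$, hence $D_n\le D_m<\Delta$ for every $n$ in the period. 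This proves \eqref{eq:bound-a2}.

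Since the $D_n$ are positive integers, \eqref{eq:bound-a2} gives $1\le D_n\le \Delta-1$. Positivity of the norms means $C_n$ and $C_{n-1}$ have opposite signs, as in \eqref{eq:bn}. Therefore
\[
b_n^2=\Delta+D_nD_{n-1}\le \Delta+(\Delta-1)^2,
\]
so $|b_n|\le \lfloor\sqrt{(\Delta-1)^2+\Delta}\rfloor$. Thus $b_n$ takes at most $2\lfloor\sqrt{(\Delta-1)^2+\Delta}\rfloor+1$ values.

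Next, $C_n=\pm D_n$ takes at most $2(\Delta-1)$ values. Each complete quotient $\alpha_n=(b_n+\delta)/C_n$ is determined by the pair $(b_n,C_n)$. Hence the number of distinct complete quotients in the period is at most $2(\Delta-1)(2\lfloor\sqrt{(\Delta-1)^2+\Delta}\rfloor+1)$. The complete quotients within one period are pairwise distinct, since otherwise the period would be shorter. So $\lambda(\alpha)$ is bounded by this count.

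The only delicate point is the first step: justifying that Lemma \ref{lemma:Dn} applies at the maximizing index. This requires the neighbouring indices $m\pm1$ to lie in the period, with positive norms there, and the inequality $|a_m|\le 2\sqrt{1+\Delta/D_m^2}$ from the proof of Proposition \ref{prop:norm_positive_1}. Cyclicity of the period handles the neighbours, by shifting to a later occurrence of the same complete quotient if necessary. The rest is the same counting as in Proposition \ref{prop:norm_positive_1}.
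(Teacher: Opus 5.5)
Your proposal is correct and follows the paper's proof: the paper says to repeat the argument of Proposition~\ref{prop:norm_positive_1}, using Lemma~\ref{lemma:Dn} at the index where $D_n$ is maximal to get $D_n<\Delta$, and then to count the pairs $(b_n,C_n)$ as you do. Your use of cyclicity to ensure that $m\pm1$ also lie in the periodic part makes explicit a detail the paper leaves implicit.
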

\begin{proof}
    The proof follows as the proof of Proposition \ref{prop:norm_positive_1} applying Lemma \ref{lemma:Dn} to bound $D_n$'s in the period.
\end{proof}
Similarly to Corollary \ref{cor:np-eta}, we can also obtain the following corollary from the previous proposition. 
\begin{Corollary}
    Let $\alpha \in \mathbb{Q}_p$ be a quadratic irrational and let $n_0, K \in \mathbb{N}$, such that 
    \begin{itemize}
        \item $N(\xi_n)>0$ for all $n \in \{n_0,n_0+1,\dots,n_0+K\}$;
        \item $\displaystyle \max_{n_0 \le n \le n_0+K} D_n = \max_{n_0 < n < n_0+K} D_n$;
        \item $|a_n| > 2$ for all $n \in \{n_0,n_0+1,\dots,n_0+K\}$;
    \end{itemize}
    then, $D_n < \Delta$ for all $n \in \{n_0,n_0+1,\dots,n_0+K\}$. Moreover, if $$K > 2 (\Delta-1)(2\lfloor \sqrt{ ( \Delta-1)^2+\Delta }\rfloor+1)$$ then $\alpha$ is periodic with $$\lambda(\alpha)\le 2 (\Delta-1)(2\lfloor \sqrt{ ( \Delta-1)^2+\Delta }\rfloor+1).$$
\end{Corollary}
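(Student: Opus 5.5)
The plan follows the proof of Corollary \ref{cor:np-eta}, with Lemma \ref{lemma:Dn} replacing the $\eta$-dependent estimate.

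\textbf{Step 1: local maxima of $D_n$ are below $\Delta$.} Let $m\in[n_0,n_0+K]$ be an index where $\max_{n_0\le n\le n_0+K} D_n$ is attained. By the second hypothesis we may take $n_0<m<n_0+K$. Then both $m-1$ and $m+1$ lie in the interval, so $D_m\ge D_{m-1}$ and $D_m\ge D_{m+1}$. The norms $N(\xi_m)$ and $N(\xi_{m+1})$ are positive, and $|a_m|>2$. Lemma \ref{lemma:Dn} therefore gives $D_m<\Delta$. Since $D_m$ is the maximum over the interval, $D_n\le D_m<\Delta$ for every $n\in[n_0,n_0+K]$.

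The only delicate point is that Lemma \ref{lemma:Dn} implicitly uses \eqref{eq:bn} at index $m$ and \eqref{eq:Dn1} at index $m$. These need positivity of $N(\xi_{m-1})$, $N(\xi_m)$ and $N(\xi_{m+1})$, i.e. opposite signs for $C_{m-1}, C_m$ and for $C_m, C_{m+1}$. This holds because $m-1,m,m+1\in[n_0,n_0+K]$. The interior-maximum hypothesis is exactly what guarantees this.

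\textbf{Step 2: count the possible complete quotients.} Since $D_n$ is a positive integer, $D_n\le\Delta-1$, so $C_n$ takes at most $2(\Delta-1)$ values. By \eqref{eq:bn}, $b_n^2=\Delta+D_nD_{n-1}\le\Delta+(\Delta-1)^2$ for $n_0<n\le n_0+K$. Hence $b_n$ takes at most $2\lfloor\sqrt{(\Delta-1)^2+\Delta}\rfloor+1$ values. At the endpoint $n=n_0$, this bound needs $D_{n_0-1}$; to avoid that, I would restrict the count to $n\in(n_0,n_0+K]$, which still contains $K$ indices. Since $\alpha_n=(b_n+\delta)/C_n$, the number of distinct $\alpha_n$ on these indices is at most $K':=2(\Delta-1)(2\lfloor\sqrt{(\Delta-1)^2+\Delta}\rfloor+1)$.

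\textbf{Step 3: pigeonhole.} If $K>K'$, the $K\ge K'+1$ indices in $(n_0,n_0+K]$ cannot all carry distinct $\alpha_n$. So there exist indices $\bar m<\bar n$ in the interval with $\alpha_{\bar m}=\alpha_{\bar n}$. Since the Browkin algorithm is deterministic, the expansion is periodic.

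For the period length, take the first repetition among $\alpha_{n_0+1},\dots,\alpha_{n_0+K'+1}$. It occurs within a window of $K'+1$ consecutive indices, so $\bar n-\bar m\le K'$, and hence $\lambda(\alpha)\le K'$.

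I expect Step 2 to be the main obstacle, specifically the endpoint bookkeeping there: making sure every counted index has both neighbours' norms positive, so that \eqref{eq:bn} applies.
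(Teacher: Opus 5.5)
Your proposal is correct and follows the paper's route. Lemma \ref{lemma:Dn}, applied at an interior maximum of $D_n$, gives $D_n<\Delta$ on the whole window; then counting pairs $(b_n,C_n)$ over the indices $n_0<n\le n_0+K$ (as the paper does in the corollary following Proposition \ref{prop:Npos-a1}) and applying the pigeonhole principle yields periodicity and the bound on $\lambda(\alpha)$. One minor point: Lemma \ref{lemma:Dn} only uses $N(\xi_m)>0$ and $N(\xi_{m+1})>0$ (i.e.\ $C_{m-1}C_m<0$ and $C_mC_{m+1}<0$), so the positivity of $N(\xi_{m-1})$ in your list is not needed, though it holds anyway.
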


In the following, we will treat the case where $1 < |a_n| < 2$.

\begin{Lemma}\label{lemma:b}
Assume that $\alpha\in\mathbb Q_p$ has a periodic Browkin
continued fraction expansion and suppose that
\[
N(\xi_n)>0,
\qquad
1<|a_n|<2
\]
for all $n$ in the period. Let $m$ and $\ell$ be indices such that
$D_m=\max_n D_n$ and $D_\ell=\min_n D_n.$ Then
\[
b_mb_{m+1}>0, \quad b_\ell b_{\ell+1}<0, \quad D_m > \frac{D_\ell+\sqrt{D_\ell^2+4\Delta}}{2}.
\]
Moreover,
\[
\max\{D_{\ell-1},D_{\ell+1}\} > (1+|a_\ell|)^2D_\ell > 4D_\ell, \quad \frac{\max_nD_n}{\min_nD_n}>4.
\]
\end{Lemma}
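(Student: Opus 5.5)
The approach is to work only with the relations already used in the proofs of Proposition~\ref{prop:norm_positive_1} and Lemma~\ref{lemma:Dn}. Since all norms in the period are positive, $C_n$ and $C_{n-1}$ have opposite signs, so \eqref{eq:bn} gives $b_n^2=\Delta+D_nD_{n-1}$. In particular $|b_n|>\sqrt{\Delta}$ and $|b_n|>\sqrt{D_nD_{n-1}}$. Moreover \eqref{eq:bncnrecurrence} gives $|b_n+b_{n+1}|=|a_n|D_n$, so the hypothesis $1<|a_n|<2$ becomes
\[
D_n<|b_n+b_{n+1}|<2D_n .
\]
Everything is then a sign analysis of the pair $(b_n,b_{n+1})$ at the indices $m$ and $\ell$, combining these two facts.

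\emph{Step 1 (the maximum, $b_mb_{m+1}>0$).} Suppose instead $b_m b_{m+1}<0$. Then $|b_m+b_{m+1}|=\bigl||b_m|-|b_{m+1}|\bigr|$, and this must exceed $D_m$. On the other hand,
\[
\bigl||b_m|-|b_{m+1}|\bigr|=\frac{|b_m^2-b_{m+1}^2|}{|b_m|+|b_{m+1}|}=\frac{D_m|D_{m-1}-D_{m+1}|}{|b_m|+|b_{m+1}|}.
\]
Since $D_m$ is maximal, $|b_m|>\sqrt{D_mD_{m-1}}\ge D_{m-1}$ and $|b_{m+1}|>\sqrt{D_{m+1}D_m}\ge D_{m+1}$. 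So the denominator exceeds $D_{m-1}+D_{m+1}\ge|D_{m-1}-D_{m+1}|$, which gives $\bigl||b_m|-|b_{m+1}|\bigr|<D_m$, a contradiction.

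\emph{Step 2 (the minimum, $b_\ell b_{\ell+1}<0$).} If the signs agreed, then $|b_\ell|+|b_{\ell+1}|<2D_\ell$. But minimality gives $|b_\ell|=\sqrt{\Delta+D_\ell D_{\ell-1}}\ge\sqrt{\Delta+D_\ell^2}>D_\ell$, and likewise $|b_{\ell+1}|>D_\ell$, which is impossible. Hence the signs are opposite, and
\[
\bigl||b_\ell|-|b_{\ell+1}|\bigr|=|a_\ell|D_\ell .
\]

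\emph{Step 3 (the growth estimates).} Suppose, say, $|b_{\ell+1}|=|b_\ell|+|a_\ell|D_\ell$; the other case is symmetric, with $\ell-1$ in place of $\ell+1$. Squaring and using $b_\ell^2\ge\Delta+D_\ell^2$ and $|b_\ell|>D_\ell$ gives
\[
\Delta+D_\ell D_{\ell+1}=b_{\ell+1}^2>\Delta+D_\ell^2(1+|a_\ell|)^2 .
\]
Hence $D_{\ell+1}>(1+|a_\ell|)^2D_\ell>4D_\ell$. This yields the bound on $\max\{D_{\ell-1},D_{\ell+1}\}$, and therefore $\max_n D_n/\min_n D_n>4$.

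For the bound on $D_m$, I would use $D_\ell D_m\ge D_\ell D_{\ell+1}=b_{\ell+1}^2-\Delta$ together with $|b_{\ell+1}|>\sqrt{\Delta+D_\ell^2}+D_\ell$. This gives
\[
D_m>2D_\ell+2\sqrt{\Delta+D_\ell^2},
\]
which clearly exceeds $\frac{D_\ell+\sqrt{D_\ell^2+4\Delta}}{2}$, the positive root of $x^2-D_\ell x-\Delta$.

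The only delicate point is Step~1: one has to bound $|b_m|+|b_{m+1}|$ from below by $D_{m-1}+D_{m+1}$. This uses maximality of $D_m$ through $\sqrt{D_mD_{m\pm1}}\ge D_{m\pm1}$. I expect the remaining steps to be routine manipulations of \eqref{eq:bn} and \eqref{eq:bncnrecurrence}.
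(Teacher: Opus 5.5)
Your proof is correct, and Steps~1 and~2 are exactly the paper's arguments. Your growth estimate is a mild variant of the paper's. The paper assumes $D_{\ell+1}\ge D_{\ell-1}$, bounds $|a_\ell|=(D_{\ell+1}-D_{\ell-1})/(|b_\ell|+|b_{\ell+1}|)$ above by $(\sqrt{D_{\ell+1}}-\sqrt{D_{\ell-1}})/\sqrt{D_\ell}$, and then uses $D_{\ell-1}\ge D_\ell$. You split according to which of $|b_\ell|,|b_{\ell+1}|$ is larger, which is the same case split since $b_{\ell+1}^2-b_\ell^2=D_\ell(D_{\ell+1}-D_{\ell-1})$, and then square $|b_{\ell+1}|=|b_\ell|+|a_\ell|D_\ell$; this is slightly more direct.

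The genuine difference is the lower bound on $D_m$. The paper argues at the maximum: from $b_mb_{m+1}>0$ and $|a_m|<2$ it gets $2D_m>|b_m|+|b_{m+1}|\ge 2\sqrt{\Delta+D_mD_\ell}$, i.e.\ $D_m^2-D_\ell D_m-\Delta>0$. You argue at the minimum and transfer to $D_m$ via maximality, $D_m\ge D_{\ell\pm1}$, obtaining $D_m>2D_\ell+2\sqrt{D_\ell^2+\Delta}$. This is strictly stronger than the stated inequality, because $\sqrt{D_\ell^2+4\Delta}\le 2\sqrt{D_\ell^2+\Delta}$. It also makes the third claim follow from Step~2 and the hypothesis at $\ell$ alone, without Step~1 or any condition on $a_m$.

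What the paper's version buys is locality at the maximum. It works at any index $j$ with $D_j\ge D_{j\pm1}$ and $1<|a_j|<2$, which is the same mechanism of comparing $|a_j|D_j$ with $|b_j|+|b_{j+1}|$ that is reused for the upper bound in Proposition~\ref{prop:Npos-a1} and its local corollary. Yours instead needs the hypothesis at $\ell$ together with the global maximality of $D_m$.
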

\begin{proof}
Since $N(\xi_n)>0$ for every $n$ in the period, by
\eqref{eq:bncnrecurrence} we have
\[
b_n^2=\Delta+D_nD_{n-1}, \quad |b_n+b_{n+1}|=|a_n|D_n.
\]
We prove that $b_mb_{m+1}>0$. Assume, by contradiction, that
$b_mb_{m+1}<0$. Then
\[
|b_m+b_{m+1}|
=
\bigl||b_m|-|b_{m+1}|\bigr|
\]
and we obtain
\[
|a_m|D_m =\bigl||b_m|-|b_{m+1}|\bigr|= \frac{|b_m^2-b_{m+1}^2|}
{|b_m|+|b_{m+1}|}=D_m\frac{|D_{m-1}-D_{m+1}|}{|b_m|+|b_{m+1}|}.
\]
Since $D_m\geq D_{m-1},D_{m+1}$, we have
\[
|b_m|
=
\sqrt{\Delta+D_mD_{m-1}}
>
D_{m-1},
\]
and similarly $|b_{m+1}|>D_{m+1}.$
Therefore,
\[
|b_m|+|b_{m+1}|
>
D_{m-1}+D_{m+1}
\geq
|D_{m-1}-D_{m+1}|,
\]
and consequently $|a_m|<1,$ which contradicts the assumption $|a_m|>1$. Hence, $b_mb_{m+1}>0.$

We now prove that $b_\ell b_{\ell+1}<0$. Indeed, if
$b_\ell b_{\ell+1}>0$, then $|b_\ell+b_{\ell+1}|=
|b_\ell|+|b_{\ell+1}|.$
Since $D_{\ell-1},D_{\ell+1}\geq D_\ell,$ we obtain
\[
|b_\ell|=\sqrt{\Delta+D_\ell D_{\ell-1}}>D_\ell, \quad |b_{\ell+1}|
=\sqrt{\Delta+D_\ell D_{\ell+1}}>D_\ell.
\]
Thus,
\[
|a_\ell|D_\ell
=
|b_\ell|+|b_{\ell+1}|
>
2D_\ell,
\]
which gives $|a_\ell|>2$, a contradiction. 

We next derive a lower bound for $D_m$. Since
$b_mb_{m+1}>0$, we have
\[
|a_m|D_m=\sqrt{\Delta+D_mD_{m-1}}+\sqrt{\Delta+D_mD_{m+1}}
\geq2\sqrt{\Delta+D_mD_\ell}.
\]
From $|a_m|<2$, it follows that
\[
2D_m
>
2\sqrt{\Delta+D_mD_\ell},
\]
and therefore
\[
D_m^2-D_\ell D_m-\Delta>0.
\]
Thus,
\[
D_m>
\frac{D_\ell+\sqrt{D_\ell^2+4\Delta}}{2}.
\]

Finally, we prove the last inequalities. Assume without loss of generality, that $D_{\ell+1}\geq D_{\ell-1}$.
Since $b_\ell b_{\ell+1}<0$, 
\[
|a_\ell|=\frac{D_{\ell+1}-D_{\ell-1}}{\sqrt{\Delta+D_{\ell}D_{\ell+1}}+\sqrt{\Delta+D_{\ell}D_{\ell-1}}}<
\frac{\sqrt{D_{\ell+1}}-\sqrt{D_{\ell-1}}}{\sqrt{D_\ell}}.
\]
It follows that
\[
D_{\ell+1}>(1+|a_\ell|)^2D_\ell.
\]
We obtain
\[
\max\{D_{\ell-1},D_{\ell+1}\}
>
(1+|a_\ell|)^2D_\ell.
\]
Since $|a_\ell|>1$, this yields
\[
\max\{D_{\ell-1},D_{\ell+1}\}>4D_\ell.
\]
Finally,
\[
\max_nD_n
\geq
\max\{D_{\ell-1},D_{\ell+1}\}
>
4D_\ell
=
4\min_nD_n,
\]
and therefore
\[
\frac{\max_nD_n}{\min_nD_n}>4.
\]
\end{proof}

\begin{Proposition} \label{prop:Npos-a1}
Assume that $\alpha\in\mathbb Q_p$ has a periodic Browkin
continued fraction expansion and suppose that $N(\xi_n)>0$
for every $n$ in the period. Let $m$ be an index such that $D_m=\max_n D_n,$ and assume that $|a_m|>1$. Define
\[
L:=\frac{|a_m|^2}{4}-\frac{D_{m-1}+D_{m+1}}{2D_m}.
\]
If $L>0$, then
\[
D_n\leq M, \quad M:=\left\lfloor
\sqrt{\frac{\Delta}{L}}
\right\rfloor,
\]
for every $n$ in the period. Consequently,
\[
\lambda(\alpha)
\leq
2M
\left(
2\left\lfloor\sqrt{M^2+\Delta}\right\rfloor+1
\right).
\]
\end{Proposition}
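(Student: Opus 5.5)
The plan is to run the argument of Proposition~\ref{prop:norm_positive_1} at the single index $m$ where $D_m$ is maximal, but without the two-sided estimate that needed $|a_m|\ge 2+\eta$. Since $N(\xi_n)>0$ throughout the period, consecutive $C_n$ have opposite signs. Hence by \eqref{eq:bncnrecurrence} we have, exactly as in \eqref{eq:bn},
\[
b_m^2=\Delta+D_mD_{m-1}, \qquad b_{m+1}^2=\Delta+D_mD_{m+1}, \qquad |a_m|D_m=|b_m+b_{m+1}|.
\]

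First I will bound $|a_m|^2D_m^2$. By the triangle inequality and then $(x+y)^2\le 2(x^2+y^2)$,
\[
|a_m|^2D_m^2\le \bigl(|b_m|+|b_{m+1}|\bigr)^2\le 2\bigl(b_m^2+b_{m+1}^2\bigr)=4\Delta+2D_m\bigl(D_{m-1}+D_{m+1}\bigr).
\]
Dividing by $4D_m^2$ gives
\[
\frac{|a_m|^2}{4}-\frac{D_{m-1}+D_{m+1}}{2D_m}\le \frac{\Delta}{D_m^2},
\]
that is, $L\le \Delta/D_m^2$. When $L>0$ this yields $D_m\le\sqrt{\Delta/L}$. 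Since $D_m$ is an integer, $D_m\le M$, and by maximality of $D_m$ the same bound holds for every $D_n$ in the period.

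The hypothesis $|a_m|>1$ is not used in this inequality. It only ensures that $L>0$ is a meaningful, non-vacuous condition, because $(D_{m-1}+D_{m+1})/(2D_m)\le 1$. The step I expect to need the most care is choosing this particular symmetric estimate. Using $(x+y)^2\le 2(x^2+y^2)$, rather than bounding each of $|b_m|/D_m$ and $|b_{m+1}|/D_m$ separately as in Proposition~\ref{prop:norm_positive_1}, is exactly what produces the quantity $L$ stated in the proposition. Beyond that choice the step is routine.

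The final step is the counting argument copied from the end of the proof of Proposition~\ref{prop:norm_positive_1}. For every $n$ in the period, $b_n^2=\Delta+D_nD_{n-1}\le \Delta+M^2$, so $|b_n|\le\lfloor\sqrt{M^2+\Delta}\rfloor$. Also $C_n=\pm D_n$ with $1\le D_n\le M$. Therefore the pair $(b_n,C_n)$, which determines $\alpha_n$, takes at most $2M\bigl(2\lfloor\sqrt{M^2+\Delta}\rfloor+1\bigr)$ values. Within one period the complete quotients are pairwise distinct, so $\lambda(\alpha)$ is at most this number.
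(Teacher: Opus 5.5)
Your proof is correct and is essentially the paper's argument. It uses the same identities $b_m^2=\Delta+D_mD_{m-1}$ and $b_{m+1}^2=\Delta+D_mD_{m+1}$, the same estimate $(x+y)^2\le 2(x^2+y^2)$ giving $L\,D_m^2\le\Delta$, and the same count of pairs $(b_n,C_n)$.

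The only difference is one step. The paper first reruns the argument of Lemma~\ref{lemma:b}, using maximality of $D_m$ together with $|a_m|>1$, to obtain $b_mb_{m+1}>0$ and hence the equality $|a_m|D_m=|b_m|+|b_{m+1}|$. You instead use the triangle inequality $|b_m+b_{m+1}|\le|b_m|+|b_{m+1}|$. Only an upper bound on $|a_m|D_m$ is needed, so your shortcut is legitimate. It also shows that the hypothesis $|a_m|>1$ is superfluous for this proposition.

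Your side remark explaining that hypothesis is off, though harmless. The bound $(D_{m-1}+D_{m+1})/(2D_m)\le 1$ only gives $L\ge |a_m|^2/4-1$. That says nothing about the sign of $L$ unless $|a_m|>2$, so $|a_m|>1$ does not make the condition $L>0$ any more ``meaningful''. The accurate statement is simply that the hypothesis is not needed.
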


\begin{proof}
Since \(D_m\) is maximal and \(|a_m|>1\), the argument used
in the proof of Lemma \ref{lemma:b} to show that \(b_m b_{m+1}>0\) applies also in the present setting, as it only relies on the local assumptions
\[
D_m \geq D_{m-1}, D_{m+1}
\qquad\text{and}\quad
|a_m|>1.
\]
Hence, $b_m b_{m+1}>0.$
It follows that
\[
|a_m|D_m
=
|b_m|+|b_{m+1}|
\]
and
\[
|a_m|^2D_m^2
=
\left(|b_m|+|b_{m+1}|\right)^2
\leq
2\left(b_m^2+b_{m+1}^2\right).
\]
Then, we obtain
\[
|a_m|^2D_m^2
\leq
4\Delta
+
2D_m(D_{m-1}+D_{m+1})
\]
from which
\[
\left(\frac{|a_m|^2}{4}-\frac{D_{m-1}+D_{m+1}}{2D_m}\right)D_m^2\leq\Delta.
\]
By definition of $L$ and assuming $L > 0$ we get
\[
D_m\leq\left\lfloor\sqrt{\frac{\Delta}{L}}\right\rfloor
=M.
\]
Finally, we have
\[
|b_n^2-\Delta|=D_nD_{n-1}\leq M^2,
\]
and consequently
\[
|b_n|\leq\left\lfloor\sqrt{M^2+\Delta}\right\rfloor.
\]
Thus, there are at most
\[
2M\left(2\left\lfloor\sqrt{M^2+\Delta}\right\rfloor+1
\right)
\]
possible pairs $(b_n,C_n)$ in the period. 
\end{proof}

\begin{Corollary}
Let $\alpha\in\mathbb{Q}_p$ be a quadratic irrational and let
$n_0,K\in\mathbb{N}$ be such that $N(\xi_n)>0$ for all    $n\in[n_0, n_0 + K]$ and
\[\max_{n_0\leq n\leq n_0+K} D_n=\max_{n_0<n<n_0+K} D_n.\]
Let $m\in[n_0, n_0 + K]$ be such that $D_m=\max_{n_0\leq n\leq n_0+K}D_n$ and assume that $|a_m|>1$. Define
\[ L:=\frac{|a_m|^2}{4}-\frac{D_{m-1}+D_{m+1}}{2D_m}.
\]
If $L>0$, then
\[D_n\leq M, \quad M:=\left\lfloor\sqrt{\frac{\Delta}{L}}\right\rfloor,\]
for all $n\in[n_0, n_0+K]$. Moreover, if
\[K>2M\left(
2\left\lfloor\sqrt{M^2+\Delta}\right\rfloor+1
\right),
\]
then $\alpha$ is periodic with
\[
\lambda(\alpha)\leq2M\left(2\left\lfloor\sqrt{M^2+\Delta}\right\rfloor+1\right).
\]
\end{Corollary}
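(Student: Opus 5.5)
The plan is to mirror the proof of Proposition \ref{prop:Npos-a1}, replacing the periodicity assumption by the local maximality hypothesis on the window $[n_0,n_0+K]$. I then close with a pigeonhole argument as in Proposition \ref{prop:improv}.

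First I fix $m$ with $D_m=\max_{n_0\le n\le n_0+K}D_n$. By the second hypothesis, $m$ can be taken strictly inside the window, $n_0<m<n_0+K$, so $m-1$ and $m+1$ both lie in $[n_0,n_0+K]$. Hence $D_m\ge D_{m-1},D_{m+1}$, and all three norms $N(\xi_{m-1}),N(\xi_m),N(\xi_{m+1})$ are positive. Positivity on the window gives $b_n^2=\Delta+D_nD_{n-1}$ for the indices involved, and $|b_n+b_{n+1}|=|a_n|D_n$ follows from \eqref{eq:bncnrecurrence}. As noted in the proof of Proposition \ref{prop:Npos-a1}, the argument of Lemma \ref{lemma:b} yielding $b_mb_{m+1}>0$ uses only these local facts together with $|a_m|>1$, so it applies verbatim. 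Then $|a_m|D_m=|b_m|+|b_{m+1}|$, and squaring with $(x+y)^2\le 2(x^2+y^2)$ gives
\[|a_m|^2D_m^2\le 4\Delta+2D_m(D_{m-1}+D_{m+1}),\]
that is, $L D_m^2\le\Delta$. Since $L>0$, this yields $D_m\le M$, and therefore $D_n\le M$ for every $n$ in the window.

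Next I count the complete quotients on the window. For $n$ with both $n-1,n\in[n_0,n_0+K]$ we have $b_n^2=\Delta+D_nD_{n-1}\le\Delta+M^2$, so $|b_n|\le\lfloor\sqrt{M^2+\Delta}\rfloor$, while $C_n=\pm D_n$ with $1\le D_n\le M$. Because $\alpha_n=(b_n+\delta)/C_n$ is determined by the pair $(b_n,C_n)$, at most $2M(2\lfloor\sqrt{M^2+\Delta}\rfloor+1)$ distinct $\alpha_n$ occur for $n\in[n_0+1,n_0+K]$. That set has $K$ indices, so if $K$ exceeds this bound the pigeonhole principle produces $\bar m<\bar n$ in it with $\alpha_{\bar m}=\alpha_{\bar n}$. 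Therefore the expansion is periodic with $\lambda(\alpha)\le 2M(2\lfloor\sqrt{M^2+\Delta}\rfloor+1)$. To sharpen the period bound I would apply the pigeonhole principle to the first $2M(2\lfloor\sqrt{M^2+\Delta}\rfloor+1)+1$ indices of the window.

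The main obstacle is bookkeeping at the boundary. The bound on $b_{n_0}$ would require $D_{n_0-1}$, which lies outside the window. This is why the counting should be restricted to $n\ge n_0+1$, at the cost of one index, or alternatively use the positivity of $N(\xi_{n_0})$ together with $b_{n_0}^2=\Delta+D_{n_0}D_{n_0-1}$ only where it is controlled. I would also check that $L$ is computed from the window's $D_{m\pm1}$, which the interior-maximum hypothesis guarantees.
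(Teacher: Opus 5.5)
Your proposal is correct and follows essentially the same route as the paper: the local argument of Lemma \ref{lemma:b} gives $b_mb_{m+1}>0$ at an interior maximum, the inequality $(x+y)^2\le 2(x^2+y^2)$ yields $LD_m^2\le\Delta$, and the pairs $(b_n,C_n)$ are counted only for $n\in[n_0+1,n_0+K]$ before applying the pigeonhole principle, exactly as the paper does. One small remark: the bound $\lambda(\alpha)\le 2M(2\lfloor\sqrt{M^2+\Delta}\rfloor+1)$ already follows from the coincidence $\alpha_{\bar m}=\alpha_{\bar n}$, because the $\lambda(\alpha)$ distinct complete quotients of the cycle all occur at indices $\bar m,\dots,\bar n-1$ inside the window, so your restriction to the first $2M(2\lfloor\sqrt{M^2+\Delta}\rfloor+1)+1$ indices is an alternative justification of the stated bound, not a sharpening of it.
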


\begin{proof}
The proof follows similarly to the proof of Proposition \ref{prop:Npos-a1}, observing that the argument is local and only requires
\[
D_m\geq D_{m-1},D_{m+1},
\qquad
|a_m|>1.
\]
Hence, if $L>0$, we obtain
\[
D_m\leq
\left\lfloor
\sqrt{\frac{\Delta}{L}}
\right\rfloor
=M.
\]
By the choice of $m$, the same bound holds for all
$n\in\{n_0,\ldots,n_0+K\}$.
Moreover,
\[
b_n^2
=
\Delta+D_nD_{n-1}
\leq
\Delta+M^2
\]
for every $n\in\{n_0+1,\ldots,n_0+K\}.$ Thus, there are at
most
\[
2M
\left(
2\left\lfloor\sqrt{M^2+\Delta}\right\rfloor+1
\right)
\]
possible pairs $(b_n,C_n)$. The conclusion follows by the
pigeonhole principle.
\end{proof}

The previous results provide sufficient conditions for periodicity by deriving upper bounds for the quantities $D_n$ from suitable assumptions on the partial quotients. We now follow a different approach, showing that, when the complete quotients have eventually positive norm, controlling the variation of consecutive values $D_n$ is itself sufficient to force periodicity.

\begin{Proposition}
Let $\alpha\in\mathbb Q_p$ be a quadratic irrational and let
$H\geq0$. Define
\[K:=4\sum_{\substack{h\in\mathbb Z\\
                 |h|\leq H\\
                 p\mid h}}
\tau\left(\left|4\Delta-h^2\right|\right).
\]
Assume that there exists $n_0>1$ such that, for every $n\in[n_0,n_0+K],$ we have $N(\xi_n)>0$ and $|D_n-D_{n-1}|\leq H.$ Then the Browkin continued fraction expansion of $\alpha$ is
periodic, with period of length at most $K$.
\end{Proposition}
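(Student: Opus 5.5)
The plan is to parametrize the pairs $(b_n,C_n)$ with $n\in[n_0,n_0+K]$ by a factorization of the integer $h^2-4\Delta$, where $h=D_n-D_{n-1}$, and then apply the pigeonhole principle as in Proposition \ref{prop:improv}. Fix $n\in[n_0,n_0+K]$. Since $N(\xi_n)>0$, the relations \eqref{eq:bncnrecurrence} give $C_nC_{n-1}<0$ and
\[
b_n^2=\Delta+D_nD_{n-1},
\]
as in \eqref{eq:bn}. Put $h:=D_n-D_{n-1}$ and $s:=D_n+D_{n-1}$. Using $4D_nD_{n-1}=s^2-h^2$, the relation above becomes
\[
(s-2b_n)(s+2b_n)=h^2-4\Delta .
\]
The right-hand side is non-zero because $\Delta$ is not a perfect square. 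Moreover, $n\geq n_0>1$ gives $k_n,k_{n-1}\geq1$ by part 1 of the first Proposition of Section \ref{sec:pre}. Hence $p\mid D_n$ and $p\mid D_{n-1}$, and so $p\mid h$. By hypothesis we also have $|h|\leq H$. Therefore $h$ ranges over the index set $\{h\in\mathbb Z:\ |h|\le H,\ p\mid h\}$ of the sum defining $K$.

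Next, fix such an $h$ and count the complete quotients $\alpha_n$ that can produce it. Set $x:=s-2b_n$ and $y:=s+2b_n$. Then $xy=h^2-4\Delta$, so $x$ is a signed divisor of $h^2-4\Delta$ and $y$ is determined by $x$. This leaves at most $2\tau(|4\Delta-h^2|)$ choices for $x$, and each choice determines $s=(x+y)/2$ and $b_n=(y-x)/4$. From $s$ and $h$ we recover $D_n=(s+h)/2$. Finally $C_n=\pm D_n$, which contributes one more factor $2$ for the sign. The representation $\alpha_n=\frac{b_n+\delta}{C_n}$ shows that the pair $(b_n,C_n)$ determines $\alpha_n$. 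Hence, for a fixed $h$, at most $4\tau(|4\Delta-h^2|)$ distinct complete quotients can occur. Summing over the admissible $h$, the indices $n\in[n_0,n_0+K]$ produce at most $K$ distinct values of $\alpha_n$.

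The interval $[n_0,n_0+K]$ contains $K+1$ indices. By the pigeonhole principle there are $\bar m<\bar n$ in it with $\alpha_{\bar m}=\alpha_{\bar n}$. This makes the expansion periodic with $\lambda(\alpha)\le \bar n-\bar m\le K$.

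The main point to get right is the bookkeeping in the second step, so that the constant $4$ is justified. First, $b_n$ must genuinely be recovered from the divisor $x$. This holds because $x$ and $y$ have the same parity, which is forced since their difference $4b_n$ is even; divisors that do not yield integers are simply discarded, so they only make the bound $2\tau$ generous. Second, the sign of $C_n$ has to be counted separately, because positivity of the norm forces $C_n$ and $C_{n-1}$ to have opposite signs but does not fix the sign of $C_n$ itself. The rest is the same pigeonhole argument already used in Propositions \ref{prop:improv} and \ref{prop:neg}.
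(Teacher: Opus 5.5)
Your proof is correct and follows the paper's argument essentially verbatim. It uses the same substitution $h_n=D_n-D_{n-1}$, $s_n=D_n+D_{n-1}$ and the same factorization of $4\Delta-h^2$; your choice $x=s-2b_n$ in place of the paper's $2b_n-s_n$ only changes a sign. The remaining steps also match: $p\mid h_n$ for $n>1$, the extra factor $2$ for the sign of $C_n$, and the pigeonhole principle on the $K+1$ indices.

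Your remark that $h^2-4\Delta\neq 0$ because $\Delta$ is not a square fills a small detail the paper leaves implicit. It is needed for $\tau\left(\left|4\Delta-h^2\right|\right)$ to make sense.
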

\begin{proof}
For every $n\in[n_0,n_0+K]$, set
\[
h_n:=D_n-D_{n-1},
\qquad
s_n:=D_n+D_{n-1}.
\]
Since $N(\xi_n)>0$, we have $b_n^2=\Delta+D_nD_{n-1}.$
Moreover,
\[
4D_nD_{n-1}=s_n^2-h_n^2,
\]
and therefore
\[
4b_n^2=4\Delta+s_n^2-h_n^2.
\]
It follows that
\[
(2b_n-s_n)(2b_n+s_n)=4\Delta-h_n^2.
\]
Fix a possible value $h$ of $h_n$. Put
\[
x_n:=2b_n-s_n,
\qquad
y_n:=2b_n+s_n.
\]
Then
\[
x_ny_n=4\Delta-h^2.
\]
There are at most
\[
2\tau\left(\left|4\Delta-h^2\right|\right)
\]
possible integer values for $x_n$, and $y_n$ is then uniquely
determined. Moreover,
\[
b_n=\frac{x_n+y_n}{4},
\qquad
s_n=\frac{y_n-x_n}{2},
\qquad
D_n=\frac{s_n+h}{2}.
\]
Thus, for a fixed $h$, there are at most
\[
2\tau\left(\left|4\Delta-h^2\right|\right)
\]
possible pairs $(b_n,D_n)$.
For $n>1$, both $D_n$ and $D_{n-1}$ are divisible by $p$.
Consequently, $p\mid h_n.$
Finally, for each pair $(b_n,D_n)$ there are at most two possible
values of $C_n$, namely $C_n=\pm D_n$. Hence, the total number of
possible complete quotients in the interval is at most
\[
4
\sum_{\substack{h\in\mathbb Z\\
                 |h|\leq H\\
                 p\mid h}}
\tau\left(\left|4\Delta-h^2\right|\right)=K.
\]

Since the interval contains $K+1$ complete quotients, two of
them must coincide. Therefore, the Browkin continued fraction
expansion of $\alpha$ is periodic, and its period length is at
most $K$.
\end{proof}

\begin{Corollary} \label{cor:diff}
Let $\alpha\in\mathbb Q_p$ be a quadratic irrational. Assume that
there exists $n_0>1$ such that $N(\xi_n)>0$ for every $n\geq n_0$. If the sequence $\left(|D_n-D_{n-1}|\right)_{n\geq n_0}$ is bounded, then the Browkin continued fraction expansion of
$\alpha$ is periodic.
\end{Corollary}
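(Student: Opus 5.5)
The plan is to reduce Corollary~\ref{cor:diff} directly to the preceding Proposition, applied with a suitable constant $H$ and with its starting index taken to be $n_0$.

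Since the sequence $\left(|D_n-D_{n-1}|\right)_{n\geq n_0}$ is bounded, fix an integer $H\geq 0$ such that $|D_n-D_{n-1}|\leq H$ for every $n\geq n_0$. With this $H$, define
\[
K:=4\sum_{\substack{h\in\mathbb Z\\ |h|\leq H\\ p\mid h}}\tau\left(\left|4\Delta-h^2\right|\right)
\]
as in the preceding Proposition. This is a finite number, because the sum has finitely many terms. One point needs checking: each $\tau(|4\Delta-h^2|)$ must be finite, i.e.\ $4\Delta-h^2\neq 0$. This holds because $\Delta$ is a non-square, so $4\Delta$ is not a perfect square and $4\Delta-h^2$ never vanishes.

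The hypotheses of the Proposition then hold on the interval $[n_0,n_0+K]$. Indeed, $n_0>1$, and $N(\xi_n)>0$ holds for all $n\geq n_0$, hence on this interval. Likewise $|D_n-D_{n-1}|\leq H$ holds there by the choice of $H$. The Proposition therefore yields that the Browkin expansion of $\alpha$ is periodic, with period length at most $K$.

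There is no real obstacle here. The only delicate points are the finiteness of $K$, which relies on $\Delta$ being a non-square, and the fact that a bounded sequence of nonnegative integers admits a uniform integer bound $H$. As a remark, the argument actually shows more: any window of $K+1$ consecutive indices beyond $n_0$ already forces periodicity.
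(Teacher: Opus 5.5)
Your proposal is correct and matches how the corollary follows in the paper, which states it without proof as an immediate consequence of the preceding Proposition: choose $H$ bounding $|D_n-D_{n-1}|$ for $n\geq n_0$, then apply the Proposition on $[n_0,n_0+K]$. Your check that $4\Delta-h^2\neq 0$, so that $K$ is finite, is a detail the paper leaves implicit.
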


The preceding results can be used to derive practical sufficient conditions for non-periodicity, thereby providing useful tools in the search for counterexamples to the analogue of Lagrange's theorem for Browkin continued fractions. Indeed, as we will discuss in Section \ref{sec:sq}, it seems unlikely that every quadratic irrational has a periodic expansion through the Browkin algorithm. Proposition~\ref{prop:neg} shows that any possible counterexample can have only finitely many complete quotients of negative norm and must therefore have eventually positive norms. In the following, we summarize the previous results in terms of conditions for non--periodicity. 

\begin{Proposition}
Let $\alpha = \sqrt{\Delta}\in\mathbb{Q}_p$, where $\Delta>0$ is a
non-square integer, and assume that there exists $n_0 \geq 2$ such that \(N(\xi_n)>0\) for every \(n\geq n_0.\)
\begin{enumerate}
\item Assume that there exists $\eta>0$ such that
\(
|a_n|\geq 2+\eta\) for every \(n\geq n_0.\)
Define
\[
M
:=
\left\lfloor
\sqrt{
\frac{\Delta}
{\eta+\eta^2/4}
}
\right\rfloor \quad
\text{and} \quad
K:=2M\left(2\left\lfloor\sqrt{M^2+\Delta}\right\rfloor+1\right).
\]
If either
\[
D_n>M
\quad\text{for some }n\geq n_0,
\]
or
\[
\alpha_{n+2}\neq\alpha_{2}
\qquad
\text{for every }1\leq n\leq K,
\]
then the Browkin continued fraction expansion of
$\sqrt{\Delta}$ is not periodic.

\item Assume that \(|a_n|>2\) for every $n\geq n_0$.
Define
\[
K
:=
2(\Delta-1)
\left(
2\left\lfloor
\sqrt{(\Delta-1)^2+\Delta}
\right\rfloor+1
\right).
\]
If either
\[
D_n>\Delta
\qquad\text{for some }n\geq n_0,
\]
or
\[
\alpha_{n+2}\neq\alpha_{2}
\qquad
\text{for every }1\leq n\leq K,
\]
then the Browkin continued fraction expansion of
$\sqrt{\Delta}$ is not periodic.

\item Assume that $1<|a_n|<2$ for every $n\geq n_0$ and
\[
b_nb_{n+1}\geq0
\qquad
\text{for every } n\geq n_0.
\]
Then the Browkin continued fraction expansion of
$\sqrt{\Delta}$ is not periodic.

\item Assume that $1<|a_n|<2$ for every $n\geq n_0$ and
\[
\max\{D_{n-1},D_{n+1}\}
\leq
(1+|a_n|)^2D_n
\qquad
\text{for every } n>n_0.
\]
Then the Browkin continued fraction expansion of
$\sqrt{\Delta}$ is not periodic.
\item Assume that there exists $\eta>0$ such that, for every
$n\geq n_0$ satisfying $D_n\geq D_{n-1},D_{n+1},$ we have $|a_n|>1$ and
\[
\frac{|a_n|^2}{4}-\frac{D_{n-1}+D_{n+1}}{2D_n}\geq \eta.
\]
Define
\[
M:=\left\lfloor\sqrt{\frac{\Delta}{\eta}}\right\rfloor, \quad K:=
2M\left(2\left\lfloor\sqrt{M^2+\Delta}\right\rfloor+1\right).
\]
If either
\[
D_n>M
\]
for some $n\geq n_0$, or
\[
\alpha_{n+2}\neq\alpha_2
\]
for every $1\leq n\leq K$, then the Browkin continued fraction
expansion of $\sqrt{\Delta}$ is not periodic.
\end{enumerate}
\end{Proposition}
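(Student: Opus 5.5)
Each of the five items is the contrapositive of one of the periodicity results already proved, combined with Bedocchi's structural fact recalled in the introduction. For $\alpha=\sqrt{\Delta}$ with $p$ odd, a periodic Browkin expansion has preperiod of length $2$. So if $\sqrt{\Delta}$ is periodic, the purely periodic part begins at $\alpha_2$ and $\alpha_{2+\lambda(\alpha)}=\alpha_2$. Moreover, since $N(\xi_n)>0$ for all $n\geq n_0$, every complete quotient in the period has positive norm: the period occurs infinitely often beyond $n_0$. Likewise every hypothesis imposed for $n\geq n_0$ (on $|a_n|$, on $b_nb_{n+1}$, on $D_{n\pm1}$) holds for all indices of the period, because each index class of the period has representatives $\geq n_0$. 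The plan is therefore uniform: assume $\sqrt{\Delta}$ is periodic, transfer the hypotheses to the period, invoke the matching earlier result, and contradict the stated alternative.

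\textbf{Items 1 and 2.} Under the hypotheses of item 1, Proposition \ref{prop:norm_positive_1} gives $D_n\leq M$ for all $n$ in the period and $\lambda(\alpha)\leq K$. Every $n\geq n_0$ is congruent modulo the period to an index of the period with $\alpha_n$ equal to that complete quotient, so $D_n\leq M$ for all $n\geq n_0$, contradicting the first alternative. Also $1\leq\lambda(\alpha)\leq K$ and $\alpha_{2+\lambda(\alpha)}=\alpha_2$, contradicting the second alternative. Item 2 is identical, using Proposition \ref{prop:Npos2}, which gives $D_n<\Delta$ (so $D_n>\Delta$, indeed even $D_n\geq\Delta$, is impossible) and the stated bound on $\lambda(\alpha)$.

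\textbf{Items 3 and 4.} Periodicity together with $1<|a_n|<2$ on the period puts us under Lemma \ref{lemma:b}. For item 3, the lemma supplies an index $\ell$ of minimal $D_\ell$ with $b_\ell b_{\ell+1}<0$. Translating $\ell$ by a multiple of the period to get $\ell\geq n_0$ contradicts $b_nb_{n+1}\geq0$. For item 4, the lemma gives $\max\{D_{\ell-1},D_{\ell+1}\}>(1+|a_\ell|)^2D_\ell$, and we again choose the representative $\ell>n_0$, contradicting the hypothesis. The translation step uses that $b_n$, $D_n$ and $a_n$ depend only on $\alpha_n$, hence are periodic in $n$.

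\textbf{Item 5.} Take $m$ in the period with $D_m$ maximal, translated so that $m\geq n_0$. Then $D_m\geq D_{m-1},D_{m+1}$, so the hypothesis gives $|a_m|>1$ and $L\geq\eta>0$. Proposition \ref{prop:Npos-a1} then yields $D_n\leq\lfloor\sqrt{\Delta/L}\rfloor\leq M$ and $\lambda(\alpha)\leq K$, and we conclude as in item 1.

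The only delicate point, and the main step to watch, is the bookkeeping for the indices. We must ensure that the maximum/minimum index and its neighbours $m\pm1$, $\ell\pm1$ can be taken beyond $n_0$, which is fine since the period repeats infinitely often. We must also check that Bedocchi's preperiod-$2$ result is exactly what justifies comparing with $\alpha_2$ rather than some later complete quotient. If the preperiod were shorter, $\alpha_{2+\lambda}=\alpha_2$ would still hold, so the argument is safe in either case.
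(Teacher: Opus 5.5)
Your proposal is correct and follows the same route as the paper. The paper's proof simply states that the five items follow from Propositions \ref{prop:norm_positive_1}, \ref{prop:Npos2}, \ref{prop:Npos-a1} and Lemma \ref{lemma:b}, together with the fact that a periodic $\sqrt{\Delta}$ has preperiod of length $2$; your contrapositive argument (transferring the hypotheses to the whole period, translating the extremal indices $m,\ell$ and their neighbours beyond $n_0$, and using monotonicity of $K$ in $M$ for item 5) spells out the bookkeeping the paper leaves implicit.
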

\begin{proof}
It follows from Propositions \ref{prop:norm_positive_1}, \ref{prop:Npos2}, \ref{prop:Npos-a1} and Lemma \ref{lemma:b} recalling that, for the square root of an integer, the preperiod (in case of periodic expansions) has length 2.
\end{proof}

Finally, we recall the following result from \cite{CMT} from which we derive another sufficient condition for non--periodicity.
\begin{Lemma}[Proposition 4.1 and Corollary 4.6]\label{lemma:boundedbn}
Let $\alpha \in \mathbb Q_p$ be a quadratic irrational. The expansion of the Browkin continued fraction of $\alpha$ is periodic if and only if $\{|b_n|\}_{n \ge 0}$ is bounded from the above.
\end{Lemma}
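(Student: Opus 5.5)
The two directions are handled separately, and both rest on the representation $\alpha_n = \frac{b_n+\delta}{p^{k_n}c_n}$ together with the recurrences \eqref{eq:bncnrecurrence}.

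\emph{Periodic implies bounded.} Suppose the expansion is periodic. Then the sequence of complete quotients $(\alpha_n)$ takes only finitely many values. The pair $(b_n, C_n)$ is determined by $\alpha_n$, once $\delta$ is fixed and we normalize $C_n = p^{k_n}c_n$ with $p\nmid c_n$. Hence only finitely many values of $b_n$ occur, and $\{|b_n|\}$ is bounded.

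\emph{Bounded implies periodic.} Assume $|b_n|\le B$ for all $n$. The plan is to bound $|C_n|$ and then apply the pigeonhole principle, as in Proposition \ref{prop:improv}.

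First, I handle the case where the norm $\Delta - b_n^2 = C_nC_{n-1}$ vanishes. This would force $\Delta = b_n^2$, which is impossible because $\Delta$ is not a square. Hence $C_nC_{n-1}\neq 0$.

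Next, I bound $C_n$ through the product $C_nC_{n-1}$. Since $|C_nC_{n-1}|=|\Delta-b_n^2|\le \Delta+B^2$, each product is bounded. To bound each factor individually, I use that $C_n$ and $C_{n-1}$ are nonzero integers, so $|C_{n-1}|\ge 1$. It follows that $|C_n|\le \Delta+B^2$ for every $n\ge1$.

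With both bounds in hand, the pair $(b_n,C_n)$ ranges over a finite set, and so does $\alpha_n$. By the pigeonhole principle $\alpha_m=\alpha_n$ for some $m<n$. Since the Browkin algorithm is deterministic, the expansion is periodic from index $m$ onward.

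I expect the main obstacle to be the integrality claims. We need $b_n, c_n\in\mathbb Z$, with $k_n\ge 0$ so that $C_n$ is an integer. This should follow by induction from \eqref{eq:bncnrecurrence}, using $v_p(b_n+\delta)=0$ as shown in the proof of Proposition \ref{prop:improv}, together with $k_n>0$ for $n\ge1$. Once integrality is established, the remaining steps are immediate. The paper cites this lemma from \cite{CMT}, where this normalization is established, so I would rely on it.
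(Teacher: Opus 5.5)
Your argument is correct. The paper gives no proof of this lemma; it is quoted from \cite{CMT} (Proposition 4.1 and Corollary 4.6). So your text is a self-contained substitute for a citation, not a variant of an argument in the paper. It uses the same counting-plus-pigeonhole mechanism as Propositions \ref{prop:improv} and \ref{prop:neg}. A bound on $|b_n|$ bounds $|C_nC_{n-1}|=|\Delta-b_n^2|$. Integrality and non-vanishing of $C_{n-1}$ turn this into a bound on $|C_n|$. Finitely many pairs $(b_n,C_n)$ then force $\alpha_m=\alpha_n$ for some $m<n$. What your route buys is transparency: the only substantive input is integrality of $b_n$ and $C_n$, which is built into the paper's standing notation. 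It also gives an explicit period bound in terms of $\Delta$ and $\sup_n|b_n|$, in the spirit of Section \ref{sec:bound}.

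Two side justifications should be corrected.

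First, $(b_n,C_n)$ is determined by $\alpha_n$ because $\delta\notin\mathbb Q$. Thus $1/C_n$ is the $\delta$-coordinate of $\alpha_n$ in the $\mathbb Q$-basis $\{1,\delta\}$, and the normalization $p\nmid c_n$ plays no role.

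Second, for integrality do not appeal to $v_p(b_n+\delta)=0$ from the proof of Proposition \ref{prop:improv}, since that computation already presupposes the integral representation. The induction needs only $v_p(b_n+\delta)\ge 0$, which is automatic from $b_n\in\mathbb Z$ and $\delta\in\mathbb Z_p$.
\begin{itemize}
\item This gives $v_p(C_n)\ge -v_p(\alpha_n)$, hence $v_p(a_nC_n)\ge 0$ (using $v_p(a_n)\ge\min(v_p(\alpha_n),0)$ and $C_n\in\mathbb Z$), so $b_{n+1}=a_nC_n-b_n\in\mathbb Z$.
\item Then $C_{n+1}=C_{n-1}-a_n^2C_n+2a_nb_n\in\mathbb Z[\frac{1}{p}]$, and $v_p(C_{n+1})=v_p(b_{n+1}+\delta)-v_p(\alpha_{n+1})>0$, so $C_{n+1}\in\mathbb Z$.
\item The base case needs $C_0\in\mathbb Z$ with $C_0\mid\Delta-b_0^2$. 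This is not automatic, but you can always arrange it by multiplying numerator and denominator of $\alpha$ by $|C_0|$, which replaces $\Delta$ by $C_0^2\Delta$.
\end{itemize}
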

Before stating the next proposition we recall that, for $n \ge 1$, it holds $k_n \ge 1$, $C_n \in \mathbb{Z}$, $D_n \ge p$. Moreover, $$|a_nD_n|= |\Tilde{a}_nc_n| \ge 1.$$ 
We define with $\varepsilon_n$ the sign such that $D_n=\varepsilon_nC_n$
\begin{Proposition}
    Let $\alpha \in \mathbb{Q}_p$ be a quadratic irrational such that there exists $n_0\ge1$ for which 
    \begin{enumerate}
        \item[(i)] $N(\xi_n)>0$ for every $n\ge n_0$;
        \item[(ii)] the partial quotients $a_n$ have constant sign for every $n\ge n_0$.
    \end{enumerate}
    Then the Browkin continued fractions expansion of $\alpha$ is not periodic.
\end{Proposition}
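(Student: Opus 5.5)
The plan is to show that the sequence $(|b_n|)_{n\geq n_0}$ is unbounded and then apply Lemma~\ref{lemma:boundedbn}. This rests on the recurrences \eqref{eq:bncnrecurrence} together with the sign information coming from (i) and (ii).

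First I record the sign structure. By (i) and \eqref{eq:bncnrecurrence} we have $C_nC_{n-1}=\Delta-b_n^2<0$ for $n\geq n_0$. Hence $b_n^2=\Delta+D_nD_{n-1}>\Delta>0$, so $b_n\neq 0$, and the signs $\varepsilon_n$ of $C_n$ alternate. Since by (ii) the sign of $a_n$ is constant, the quantity $s_n:=\operatorname{sign}(a_nC_n)$ alternates as well: $s_{n+1}=-s_n$. I will use the relation $b_{n+1}=a_nC_n-b_n$ and the fact, recalled just before the statement, that $|a_nC_n|=|a_nD_n|\geq 1$, with $a_nC_n\in\mathbb Z$ because it equals $b_n+b_{n+1}$.

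The key step is a dichotomy on the relative sign of $b_n$ and $a_nC_n$.

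\emph{Case $\operatorname{sign}(b_n)=-s_n$ for some $n\geq n_0$.} Then $a_nC_n$ and $-b_n$ have the same sign. So $b_{n+1}=a_nC_n-b_n$ has sign $s_n=-s_{n+1}$ and
\[
|b_{n+1}|=|a_nC_n|+|b_n|\geq |b_n|+1 .
\]
Thus the same condition holds at $n+1$. By induction, $|b_{n+j}|\geq |b_n|+j$ for all $j\geq 0$, so $(|b_n|)$ is unbounded and Lemma~\ref{lemma:boundedbn} gives non-periodicity.

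\emph{Case $\operatorname{sign}(b_n)=s_n$ for every $n\geq n_0$.} Then $\operatorname{sign}(b_{n+1})=s_{n+1}=-s_n$. Since $b_{n+1}=a_nC_n-b_n$ with $a_nC_n$ and $b_n$ of the same sign, the only way $b_{n+1}$ can have the opposite sign is $|a_nC_n|<|b_n|$, and then
\[
|b_{n+1}|=|b_n|-|a_nC_n|\leq |b_n|-1 .
\]
This gives an infinite strictly decreasing sequence of positive integers (recall $|b_n|>\sqrt\Delta>0$), which is impossible. So this case cannot occur, and the first case always applies.

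I expect the main obstacle to be the bookkeeping of signs: checking that $s_n$ really alternates, that $b_n\neq 0$ so that its sign is defined, and that the inequality $|a_nC_n|\geq1$ holds for all $n\geq n_0$. The last point requires $n_0\geq 1$ so that $k_n\geq 1$ and $C_n\in\mathbb Z$, as recalled before the statement. Once these are in place, the dichotomy and the appeal to Lemma~\ref{lemma:boundedbn} finish the argument.
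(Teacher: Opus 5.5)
Your proof is correct and uses the same argument as the paper. Both rest on the alternation of $\operatorname{sign}(C_n)$ forced by positive norms, the constant sign of $a_n$, the bound $|a_nC_n|=|\tilde a_n c_n|\geq 1$, and Lemma~\ref{lemma:boundedbn}. The paper avoids your case split by setting $g_n=(-1)^nb_n$: its increments $(-1)^{n+1}a_nC_n$ all have the same sign and absolute value at least $1$, so $|g_n|=|b_n|\to\infty$ directly. Your two cases correspond to whether $g_n$ at some index takes the sign of these increments (and then keeps it) or has the opposite sign for all $n\geq n_0$.
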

\begin{proof}
   Define $g_n = (-1)^nb_n$. Then, 
   \begin{equation*}
       g_{n+1}-g_n=(-1)^{n+1}(b_{n+1}+b_n)=(-1)^{n+1}\Tilde{a}_nc_n.
   \end{equation*}
   Hence, $|g_{n+1}-g_n| \ge 1$. Moreover, since $N(\xi_n)>0$ for every $n\ge n_0$ the values $c_{n-1},c_n$ have opposite signs, which means that for $n \ge n_0$ it holds that $\varepsilon_n=\varepsilon_{n_0}(-1)^{n-n_0}$. Therefore, \begin{equation*}
       g_{n+1}-g_n=(-1)^{n+1}\varepsilon_{n_0}(-1)^{n-n_0}\Tilde{a}_n|c_n|=(-1)^{1-n_0}\varepsilon_{n_0}\Tilde{a}_n|c_n|.
   \end{equation*} 
   which means that all the increments $g_{n+1}-g_n$ have the same sign, since $(-1)^{1-n_0}\varepsilon_{n_0}$ is fixed and the partial quotients have the same sign according to $(ii)$. 
   Hence, all the increments \(g_{n+1}-g_n\) have the same sign
and satisfy $|g_{n+1}-g_n|\geq 1.$
Therefore, \(g_n\to+\infty\) or \(g_n\to-\infty\), according
to the sign of the increments. In either case, $|g_n|\to\infty.$ Since \(|g_n|=|b_n|\), it follows that the sequence
\((|b_n|)_{n\geq 0}\) is unbounded. By Lemma \ref{lemma:boundedbn}, the Browkin continued fraction expansion of \(\alpha\) is therefore not periodic.

\end{proof}

\section{Some computational considerations} \label{sec:comp}

In this section, we discuss some computational aspects and numerical examples related to the previous results.

\subsection{Nice continued fractions} \label{sec:comp-nice}\ \\

Theorems \ref{thm:C} and \ref{thm:B} provide an explicit method for extending a continued fraction $[a_0,\ldots,a_{t-2}]$ by appending a partial quotient $a_{t-1}$ in order to obtain a nice continued fraction. In Algorithm \ref{alg}, we present this method in pseudocode.

\begin{algorithm}[H]
\caption{Completion of $[a_0, \ldots, a_{t-2}]$ to obtain a nice continued fraction}
\label{alg}
\begin{algorithmic}[1]
\Require prime $p$; Browkin continued fraction $[a_0, \ldots, a_{t-2}]$, with $|a_0| < p/4$ and $|a_0|_p > 1$
\Ensure partial quotient $a_{t-1}$ such that $[a_0, \ldots, a_{t-2}, a_{t-1}]$ is nice
\State $d \gets \tilde A_{t-3} - \tilde B_{t-3} \tilde B_{t-2}^{-1} \tilde A_{t-2} \pmod{\tilde A_{t-2}^2}$
\State $\varepsilon \gets \text{sign}(\tilde A_{t-3})$
\State $r \gets 1$, $n \gets 1$
\While{$r$ is not prime}
\State $r \gets \varepsilon pd + n \tilde A_{t-2}^2$
\State $n \gets n + 1$
\EndWhile
\State $\ell \gets \lambda(\tilde A_{t-2}^2)$, $h \gets \text{ord}_{\tilde A_{t-2}^2}(p)$
\State $\eta \gets \max \left( \frac{4}{p} |\tilde A_{t-2}|, p^{k_{t-2}} |\tilde A_{t-3}| - \frac{p}{2} |\tilde A_{t-2}| \right)$, $\mu \gets p^{k_{t-2}} |\tilde A_{t-3}| + \frac{p}{2} |\tilde A_{t-2}|$
\State $\eta' \gets \log_p(\eta) - \log_p(r) + 1 - k_{t-2}$, $\mu' \gets \log_p(\mu) - \log_p(r) + 1 - k_{t-2}$
\For{$n_1=0,1,2,\ldots$}
    \State $n_2 \gets \left\lfloor\frac{n_1 \ell \log_p(r) - \mu'}{h}\right\rfloor+1$
    \If{$\eta'<n_1 \ell \log_p(r) -n_2 h < \mu'$}
        \State \Return $(n_1,n_2)$ and \textbf{end for}
    \EndIf
\EndFor
\State $e \gets 1 + n_1 \ell$, $k_{t-1} \gets 1 - k_{t-2} + n_2 h$
\State $\tilde a_{t-1} \gets \cfrac{\varepsilon r^e - p^{k_{t-1}+k_{t-2}}\tilde A_{t-3}}{\tilde A_{t-2}}$
\State $a_{t-1} \gets \cfrac{\tilde a_{t-1}}{p^{k_{t-1}}}$
\State \Return $a_{t-1} $
\end{algorithmic}
\end{algorithm}
Although the method presented in Algorithm \ref{alg} is explicit, it is far from being efficient. 
By the prime number theorem, the average number of iterations required to find a prime $r$ of size $X$ in the arithmetic progression considered in Line 5   is $\frac{\varphi(|\tilde A_{t-2}|)}
{|\tilde A_{t-2}|}\log X.$
Thus, taking $X\approx |pd+\tilde A_{t-2}^{,2}|$, the expected number of trials is approximately
\[
\frac{\varphi(|\tilde A_{t-2}|)}
{|\tilde A_{t-2}|}
\log\left(|pd+\tilde A_{t-2}^{2}|\right).
\]
As for the loop in line 11, the number of values of $n_1$ that need to be tested should be approximately $\frac{h}{\mu' - \eta'}$. However, the main problem, from a computational point of view, occurs in Line 8, where the Carmichael function and the multiplicative order are computed. Indeed, their exact computation is closely related to the integer factorization problem. Consequently, no polynomial-time algorithms are known for this step and the best methods have subexponential complexity in the bit length of the input. Moreover, the values of $\tilde{a}_{t-1}$ and $k_{t-1}$ produced by the algorithm can be extremely large. For instance, when $p=7$, completing the continued fraction $\left[\frac{1}{7}, \frac{1}{7}\right]$ by means of the algorithm yields a partial quotient for which $k_{t-1} = 1 - k_{t-2} + n_2 h = 1 - 1 + 3646 \cdot 100$. The resulting numerators and denominators are of order $10^{308122}$. Nevertheless, much smaller values can be used to obtain a nice continued fraction. For example,
$\left[ \frac{1}{7}, \frac{1}{7}, \frac{11}{7} \right]$ or $\left[ \frac{1}{7}, \frac{1}{7}, \frac{12}{7} \right]$.

In Table \ref{tab:nice}, we present some examples of nice and non--nice continued fractions of the form $\left[ \frac{1}{p}, \ldots, \frac{1}{p}, \frac{a}{p} \right]$. Our experiments suggest that a continued fraction $\left[ \frac{1}{p}, \ldots, \frac{1}{p} \right]$ can always be completed to a nice continued fraction by appending a partial quotient of $p$--adic valuation $-1$. Such partial quotients, however, cannot be obtained through our algorithm. Therefore, the problem of finding a simpler and more efficient construction of nice continued fractions remains open.

In Table \ref{tab:nice}, for $t = 2, 3, 4, 5$, we report the values of $a$ for which $\left[ \frac{1}{7}, \ldots, \frac{1}{7}, \frac{a}{7} \right]$ is nice, as well as those for which it is not nice. Here, $t$ denotes the length of the initial continued fraction $\left[ \frac{1}{7}, \ldots, \frac{1}{7} \right]$. We observed similar results also for different primes. We performed some experiments also for larger values of $t$ even if in this case we were not able to check all the possible values of $a$ and we restricted our analysis to some random values. This is because, as $t$ increases, checking condition $c$ in Definition \ref{def:niceCF} becomes very time-consuming, since it involves the computation of discrete logarithms.

\begin{table}[H]
\centering
\caption{Values of $a$ for which $\left[ \frac{1}{p}, \ldots, \frac{1}{p}, \frac{a}{p} \right]$ is nice or non-nice continued fraction for various lengths $t$ of $\left[ \frac{1}{p}, \ldots, \frac{1}{p} \right]$}
\label{tab:nice}

\renewcommand{\arraystretch}{1.35}

\begin{tabularx}{\textwidth}{
    @{}
    >{\bfseries}l
    >{\raggedright\arraybackslash}X
    @{}
}
\toprule
$t=2$
&
Values of $a$
\\

nice & \(\{-18,-17,-16,-15,-13,-11,-9,-8,-6,-5,4,5,6,8,9,11,\)
\newline
\(\phantom{\{} 12,13,15,16,17 \}\)
\\
non-nice & \(\{-12,-10,-4,-3,-2,-1,1,2,3,10,18\}\)
\\
\bottomrule

\toprule
$t=3$
&
Values of $a$
\\

nice & \(\{ -18, -17, -16, -13, -12, -11, -9, -8, -6, -1,
 1, 3, 4, 8, 9, 11,\)
 \newline \(\phantom{\{} 12, 13, 15, 16, 17, 18 \}\)
\\
non-nice & \(\{-15, -10, -5, -4, -3, -2, 2, 5, 6, 10\}\)
\\
\bottomrule

\toprule
$t=4$
&
Values of $a$
\\

nice & \(\{-17, -16, -10, -8, -6, 8, 10, 12, 13, 16, 18, 3\}\)
\\
non-nice & \(\{-18, -15, -13, -12, -11, -9, -5, -4, -3, -2, -1, 1, 2, 4, 5, 6, \) \newline
\(\phantom{\{} 9, 11, 15, 17\}\)
\\
\bottomrule

\toprule
$t=5$
&
Values of $a$
\\

nice & \(\{-12, -11, -9, -6, -4, -3, -2, 1, 3, 6, 9, 10, 11, 12, 15, 16, 17, 18\}\)
\\
non-nice & \(\{-18, -17, -16, -15, -13, -10, -8, -5, -1, 2, 4, 5, 8, 13\}\)
\\
\bottomrule

\end{tabularx}
\end{table}

\subsection{Expansions for quadratic irrationals} \label{sec:sq} \ \\

We begin this section by examining the bounds given in Theorem \ref{thm:CMT} and the improvement provided by Proposition \ref{prop:improv} for quadratic irrationals with periodic expansions whose complete quotients all have negative norm throughout the period. In Table \ref{tab:neg}, we collect the results obtained in $\mathbb Q_7$ for several quadratic irrationals with different period lengths.
We observe that the bound in \eqref{eq:bound-CMT} is very large compared with the actual period length. The bound in \eqref{eq:bound-el}, obtained by using the elementary estimates for both the divisor function $\tau$ and the set $\mathcal B$, is also far from the numerical values, even if significantly better than \eqref{eq:bound-CMT}. On the other hand, the bound provided by \eqref{eq:bound-tau} is more accurate and appears to reflect the actual period length much more closely.

\begin{table}[H]
\caption{Quadratic irrationals with periodic expansion and negative norms in $\mathbb Q_7$.}
\label{tab:neg}
\centering
\begin{tabular}{ccccc}
\hline
$\alpha$ & Bound \eqref{eq:bound-CMT} & Bound \eqref{eq:bound-el} & Bound \eqref{eq:bound-tau} & $\lambda(\alpha)$ \\
\hline
$\frac{1+\sqrt{50}}{7}$ & 471 & 157 & 1 & 1\\
$\frac{-7+\sqrt{53}}{4}$& 516 & 167 & 1 & 1\\
$\frac{1+\sqrt{99}}{14}$& 1312 & 307 & 2 & 2 \\
$\frac{-25+\sqrt{674}}{7}$& 23325 & 2112 & 3 & 3 \\
$\frac{-25+\sqrt{870}}{7}$& 34221 & 2735 & 6& 4\\
$\frac{-26+\sqrt{872}}{7}$& 34339 & 2741 & 5& 4\\
$\frac{-46+\sqrt{2802}}{7}$& 197751 & 8796 & 10 & 6\\
$\frac{-54+\sqrt{3014}}{7}$& 220617 & 9462& 8 & 6\\
$\frac{-173+\sqrt{29978}}{7}$&6920569 &94183 & 33 & 7\\
\hline
\end{tabular}

\end{table}

Theorem \ref{thm:CMT} and Propositions \ref{prop:improv}, \ref{prop:neg} show that complete quotients with negative norm play a crucial role in forcing periodicity. In particular, Proposition \ref{prop:neg} implies that any possible counterexample to an analogue of Lagrange's theorem for Browkin continued fractions can contain only finitely many complete quotients with negative norm. Therefore, if such a counterexample exists, its complete quotients must eventually have positive norm. This naturally shifts our attention to the positive-norm case, which represents the only possible situation where non-periodic quadratic irrationals may occur. 

Let us consider now $p=5$. In Table \ref{tab:a2}, we provide some examples of quadratic irrationals satisfying the assumptions of Propositions \ref{prop:norm_positive_1} and \ref{prop:Npos2}. We observe that the bound on the values $D_n$ within the period provided by Proposition \ref{prop:norm_positive_1} is very accurate. By contrast, the corresponding bound on the period length is much less sharp, although it can be significantly improved by exploiting the same counting argument used in Proposition \ref{prop:improv}.

\begin{table}[H]
\caption{Quadratic irrationals with periodic expansion, positive norms and $|a_n| > 2$ in $\mathbb Q_5$.}
\label{tab:a2}
\centering
\begin{tabular}{cccccccc}
\hline
$\alpha$ & $\lambda(\alpha)$ & $\eta$ & $\max D_n$ & Bound \eqref{eq:bound-eta} & Bound \eqref{eq:bound-a2} & Best bound for $\lambda(\alpha)$ \\
\hline
$\frac{6+\sqrt{11}}{5}$ & 2            & 2/5 & 5 & 5   & 11 & 2\\
$\frac{11+\sqrt{21}}{10}$& 2           & 1/5 & 10 & 10 & 21 & 2 \\
$\frac{26+\sqrt{51}}{25}$& 2           & 2/25 & 25 & 25 & 51 & 2 \\
$\frac{576+\sqrt{74976}}{535}$& 6      & 1/5 & 535 & 597 & 74976 & 78 \\
$\frac{3586+\sqrt{3025971}}{3385}$& 10 & 1/5 & 3510 & 3795 & 3025971 & 466 \\
\hline
\end{tabular}

\end{table}

Finally, we present some experiments concerning square roots of integers in $\mathbb Q_5$. 

In Table \ref{tab:sq-per}, we collect some results for $\sqrt{\Delta} \in \mathbb Q_5$, with $0 < \Delta \leq 200$, for which a period was detected within the first $10000$ partial quotients. In particular, we report the length of the period, the number of complete quotients with negative norm within the period, the maximum values attained by $D_n$'s, maximum absolute difference between two consecutive values of $D_n$'s, the numbers of partial quotients whose Euclidean absolute value is, respectively, less than $1$, between $1$ and $2$, greater than $2$.  

\begin{table}[H]
\caption{$\sqrt{\Delta} \in \mathbb Q_5$ with periodic expansion}
\label{tab:sq-per}
\centering
\begin{tabular}{c c c c c c c c}
\hline
$\Delta$ & $\lambda(\sqrt{\Delta})$ & $N(\xi_n)<0$ & $\max D_n$ & $\max|D_n-D_{n-1}|$ & $|a_n| < 1$ & $1<|a_n|<2$ & $|a_n|>2$ \\
\hline
 6 &	4 & 0 & 50  & 45 & 1 & 3 & 0 \\
11 & 24 & 0 & 875 & 645 & 8 & 9 & 7 \\
14 & 6 & 0 & 125 & 70 & 0 & 6 & 0 \\
21 & 6 & 0 & 75 & 55 & 3 & 3 & 0 \\
24 & 2 & 0 & 20 & 5 & 0 & 1 & 1 \\
34 & 6 & 0 & 50 & 35 & 4 & 1 & 1 \\
54 & 10 & 0 & 230 & 185 & 5 & 5 & 0 \\
69 & 10 & 0 & 125 & 65 & 6 &4 & 0 \\
74 & 60 & 0 & 28975 & 24425 & 33 & 21 & 6 \\
76 & 26 & 2 & 2635 & 2275 & 12 & 10 & 4 \\
94 & 12 & 0 & 250 & 235 & 5 & 6 & 1 \\
99 & 8 & 0 & 450 & 355 & 5 & 3 & 0 \\
104	& 6 & 0 & 100 & 40 & 0 & 3 & 3 \\
111 & 10 & 0 & 435 & 325 & 8 & 2 & 0 \\
119 & 14 & 0 & 815 & 760 & 2 & 10 & 2 \\
\hline
\end{tabular}
\end{table}

In Table \ref{tab:sq-noper}, we report $\sqrt{\Delta} \in \mathbb Q_5$, with $0 < \Delta \leq 200$, for which no period was detected within the first $10000$ partial quotients. In particular, for $n \geq 2$, we report the number of complete quotients with negative norm, the maximum values attained by $D_n$, the maximum absolute difference between two consecutive values of $D_n$, the numbers of partial quotients whose Euclidean absolute value is, respectively, less than $1$, between $1$ and $2$, greater than $2$.
The maxima of $D_n$ and $|D_n - D_{n-1}|$ are reported on a logarithmic scale, using $\log_{10}$, since these quantities become extremely large. This representation also makes their order of magnitude, and hence approximately the number of their decimal digits, immediately evident.

\begin{longtable}{c c c c c c c}
\caption{$\sqrt{\Delta}\in\mathbb Q_5$ for which no periodicity was
detected within the first $10{,}000$ partial quotients}
\label{tab:sq-noper}\\
\hline
$\Delta$
& $N(\xi_n)<0$
& $\log_{10}(\max D_n)$
& $\log_{10}(\max |D_n-D_{n-1}|)$
& $|a_n|<1$
& $1<|a_n|<2$
& $|a_n|>2$ \\
\hline
\endfirsthead

\multicolumn{7}{c}{\tablename\ \thetable\ -- continued} \\
\hline
$\Delta$
& $N(\xi_n)<0$
& $\log_{10}(\max D_n)$
& $\log_{10}(\max |D_n-D_{n-1}|)$
& $|a_n|<1$
& $1<|a_n|<2$
& $|a_n|>2$ \\
\hline
\endhead

\hline
\endfoot
19 & 0 & 2065.5 & 2065.5 & 4010 & 3947	& 2042 \\
26 & 0 & 2189.7 & 2189.7 & 4003	& 3964	& 2032 \\
29 & 0 & 2083.4 & 2083.2 & 4001	& 4097	& 1901 \\
31 & 0 & 2160.5 & 2160.4 & 4061	& 3933	& 2005 \\
39 & 0 & 2087.9 & 2087.7 & 3996	& 3934	& 2069 \\
41 & 0 & 2145.5 & 2145.5 & 4024	& 4065	& 1910 \\
44 & 0 & 2120.3	& 2120.1 & 4043	& 3981	& 1975 \\
46 & 0 & 2061.8	& 2061.5 & 4116	& 3952	& 1931 \\
51 & 0 & 2143.4	& 2143.4 & 3995	& 3997	& 2007 \\
56 & 0 & 2217.7	& 2217.4 & 4010	& 3952	& 2037 \\
59 & 0 & 2128.7	& 2128.7 & 3949	& 4035	& 2015 \\
61 & 0 & 2121.7	& 2121.5 & 4051	& 3934	& 2014 \\
66 & 0 & 2124.7	& 2124.7 & 4031	& 3985	& 1983 \\
71 & 0 & 2143.9	& 2143.9 & 3951	& 4049	& 1999 \\
79 & 0 & 2158.6	& 2158.6 & 4026	& 3998 & 1975 \\
84 & 0 & 2151.6	& 2151.6 & 3999	& 4004	& 1996 \\
86 & 0 & 2174.8	& 2174.7 & 3988	& 3971	& 2040 \\
89 & 0 & 2223.7	& 2223.6 & 3967	& 3978 & 2054 \\
91 & 0 & 2067.8	& 2067.6 & 4049	& 3951 & 1999 \\
96 & 0 & 2216.0	& 2215.9 & 4023	& 4009 & 1967 \\
101 & 0 & 2178.2 & 2178.2 & 4014 & 3923 & 2062 \\
106 & 0 & 2219.0 & 2218.8 & 3909 & 4169 & 1921 \\
109 & 0 & 2163.8 & 2163.8 & 4026 & 3956 & 2017 \\
114 & 0 & 2151.8 & 2151.7 & 3990 & 3986 & 2023 \\
116 & 0 & 2118.4 & 2118.3 & 4081 & 3957 & 1961 \\
124 & 0 & 2190.7 & 2190.7 & 3967 & 4011 & 2021 \\
126 & 0 & 2127.6 & 2127.3 & 3982 & 4011 & 2006 \\
129 & 0 & 2082.9 & 2082.8 & 4044 & 4006 & 1949 \\
131 & 0 & 2109.1 & 2109.0 & 4026 & 3971 & 2002 \\
134 & 0 & 2193.1 & 2192.9 & 4013 & 4020 & 1966 \\
136 & 0 & 2169.4 & 2169.4 & 3997 & 4051 & 1951 \\
139 & 0 & 2191.4 & 2191.4 & 3973 & 4002 & 2024 \\
141 & 0 & 2164.2 & 2164.0 & 3946 & 4004 & 2049 \\
146 & 0 & 2192.8 & 2192.6 & 3937 & 4066 & 1996 \\
149 & 0 & 2160.0 & 2159.8 & 3996 & 4044 & 1959 \\
150 & 0 & 2144.7 & 2144.5 & 4018 & 3936 & 2045 \\
151 & 0 & 2230.5 & 2230.3 & 3956 & 4050 & 1993 \\
154 & 0 & 2225.1 & 2225.1 & 3930 & 4017 & 2052 \\
156 & 0 & 2139.2 & 2139.0 & 4004 & 4005 & 1990 \\
159 & 0 & 2226.7 & 2226.7 & 3864 & 4024 & 2111 \\
161 & 0 & 2184.8 & 2184.8 & 3943 & 4072 & 1984 \\
164 & 0 & 2127.9 & 2127.9 & 3984 & 4044 & 1971 \\
166 & 0 & 2091.6 & 2091.5 & 4001 & 3987 & 2011 \\
171 & 0 & 2100.3 & 2100.3 & 4053 & 3979 & 1967 \\
174 & 0 & 2189.8 & 2189.8 & 3950 & 4037 & 2012 \\
176 & 0 & 2119.6 & 2119.6 & 4044 & 3998 & 1957 \\
179 & 0 & 2157.3 & 2157.3 & 3976 & 4023 & 2000 \\
181 & 1 & 2135.6 & 2135.5 & 3951 & 4016 & 2032 \\
184 & 0 & 2167.1 & 2166.9 & 3912 & 4005 & 2082 \\
186 & 0 & 2194.3 & 2194.2 & 3975 & 4041 & 1983 \\
189 & 0 & 2102.6 & 2102.4 & 4085 & 3976 & 1938 \\
191 & 0 & 2136.7 & 2136.7 & 3990 & 4045 & 1964 \\
194	& 0 & 2136.3 & 2136.3 & 3937 & 4040 & 2022\\
199 & 0	& 2103.2 & 2103.2 & 4088 & 3987 & 1924\\
\hline
\end{longtable}

From the previous tables, positive norms for complete quotients appear to be the typical behaviour for square roots of integers, both in the periodic and in the apparently non-periodic cases. The main numerical distinction seems instead to concern the size of $D_n$ and $|D_n - D_{n-1}|$. Indeed, in the periodic examples these quantities are necessarily bounded, whereas in the cases for which no period was detected their running maxima grow very rapidly (consistently with Corollary \ref{cor:diff}). These experiments therefore provide further evidence that an analogue of Lagrange's theorem for Browkin continued fractions may fail. In particular, if the square roots listed in Table \ref{tab:sq-noper}
are periodic, their periods would have to be considerably longer than those observed in the periodic examples considered above (and compared to $\Delta$).
Moreover, the rapid growth of the quantities $D_n$ appears to make a return to a previously attained complete quotient increasingly unlikely. 

We can observe that Proposition \ref{prop:Npos-a1} applies to some periodic square roots of integers. For instance, consider $\sqrt{14}\in\mathbb Q_5$. Its Browkin continued fraction is
\[
\left[2, -\frac{3}{5} \overline{
-\frac95,-\frac65,\frac{166}{125},
-\frac65,-\frac95,-\frac85
}
\right],
\]
and all the complete quotients in the period have positive norm. The corresponding values of $D_n$ are
\[
5,\quad 55,\quad 125,\quad 55,\quad 5,\quad 10.
\]
Thus, the maximum is attained at an index $m$ for which
\[
D_m=125,\quad D_{m-1}=D_{m+1}=55, \quad |a_m|=\frac{166}{125}.
\]
Therefore, using the notation of Proposition \ref{prop:Npos-a1} we have
\[
K = \frac{1}{4}\left(\frac{166}{125}\right)^2
-\frac{55+55}{2\cdot125} =
\frac{14}{15625},
\]
and the bound gives
\[
D_m \leq \sqrt{\frac{14}{14/15625}} = 125.
\]
Hence, in this case the bound is attained exactly.
A similar phenomenon occurs for $\sqrt{104}\in\mathbb Q_5$.

The latter example also illustrates the advantage of the local
formulation of the proposition. Indeed, the partial quotients in the
period of $\sqrt{104}$ belong to both regimes
\[
1<|a_n|<2
\qquad\text{and}\qquad
|a_n|>2,
\]
so that none of the previous results requiring a uniform condition
on all the partial quotients applies directly. Nevertheless, the
behaviour at the index where $D_n$ attains its maximum is sufficient
to obtain an exact bound.

A further interesting feature emerges from the growth of the values $D_n$. In the apparently non-periodic cases, $\log_{10}(\max D_n)$ appears to grow approximately linearly with $n$. More specifically, define 
$$\gamma(n) := \frac{\log_{10}(\max_{j \leq n} D_j)}{n}.$$ 
We observed that this ratio is usually between $0.2$ and $0.3$. For instance, for $\sqrt{26}$, we obtain
\[ \gamma(100) = 0.28, \quad \gamma(500) = 0.24, \quad \gamma(1000) = 0.22, \quad \gamma(5000) = 0.22, \quad \gamma(10000) = 0.22 \]
while for $\sqrt{19}$ we have
\[ \gamma(100) = 0.25, \quad \gamma(500) = 0.21, \quad \gamma(1000) = 0.20, \quad \gamma(5000) = 0.21, \quad \gamma(10000) = 0.21 \]
More generally, for all the apparently non-periodic cases tested with $0<\Delta\leq 200$ we observed
\[ 0.202 \leq \gamma(5000) \leq 0.229, \quad 0.206 \leq \gamma(10000) \leq 0.223. \]
This remarkable stability suggests that, in these examples, the running maximum of $D_n$ may exhibit an approximately exponential growth with respect to $n$.

In conclusion, our results provide new restrictions on the possible behaviour of quadratic irrationals with non-periodic Browkin continued fraction expansion. In particular, any possible counterexample to an analogue of Lagrange's theorem must eventually have complete quotients of positive norm and must avoid several boundedness conditions established in the previous section. Indeed, for many square roots of integers for which no period was detected, the quantities $D_n$ and $|D_n-D_{n-1}|$ exhibit a rapid growth.
These observations leave several natural questions open. In particular, it would be interesting to obtain stronger periodicity criteria in the positive-norm case, especially when the partial quotients have norm less than 1. A deeper analysis of this behaviour may ultimately provide either new periodicity results or suitable candidates for disproving an analogue of Lagrange's theorem for Browkin continued fractions.
Moreover, it would be interesting to understand whether square roots of integers have eventually positive norms of their complete quotients, a behaviour that appears to occur in almost all the examples considered in our experiments.

\end{document}